\documentclass[11pt]{article}  

\usepackage{amssymb,amsmath,amsthm}
\usepackage[alphabetic]{amsrefs}
\usepackage{color}
\usepackage{hyperref}

\usepackage[a4paper, left=20mm, right=20mm, top=30mm, bottom=25mm]{geometry}

\newtheorem{theo}{Theorem}[section]
\newtheorem{theoremalpha}{Theorem} 

\newtheorem{lemm}[theo]{Lemma}
\newtheorem{corr}[theo]{Corollary}
\newtheorem{prop}[theo]{Proposition}

\numberwithin{equation}{section}

\theoremstyle{definition}
\newtheorem{defi}[theo]{Definition}
\newtheorem{exam}[theo]{Example}
\newtheorem{rema}[theo]{Remark}

\newcommand{\C}{\mathbb{C}}
\newcommand{\R}{\mathbb{R}}

\newcommand{\Z}{\mathbb{Z}}

\newcommand{\Sym}{\operatorname{Sym}}

\newcommand{\cone}[1]{\overline{#1}}

\newcommand{\Ad}{\mathrm{Ad}}
\newcommand{\sphere}[1]{\mathbb{S}^{#1}}
\newcommand{\hb}[1]{\mathbb{H}^{#1}}

\DeclareMathOperator{\sgn}{\mathrm{sgn}}
\DeclareMathOperator{\Ric}{\mathrm{Ric}}
\DeclareMathOperator{\scal}{\mathrm{Scal}}
\DeclareMathOperator{\rank}{\mathrm{rank}}
\DeclareMathOperator{\End}{\mathrm{End}}
\DeclareMathOperator{\Hom}{\mathrm{Hom}}
\DeclareMathOperator{\Id}{\mathrm{Id}}
\DeclareMathOperator{\sldirac}{{\partial\!\!\!/}}

\newcommand{\Spin}{\mathrm{Spin}}
\newcommand{\SO}{\mathrm{SO}}
\newcommand{\SU}{\mathrm{SU}}

\newcommand{\g}{\mathfrak{g}}
\newcommand{\h}{\mathfrak{h}}
\newcommand{\m}{\mathfrak{m}}
\renewcommand{\sl}{\mathfrak{sl}}
\newcommand{\su}{\mathfrak{su}}
\newcommand{\so}{\mathfrak{so}}
\newcommand{\spin}{\mathfrak{spin}}
\renewcommand{\Re}{\operatorname{Re}}

\title{\textbf{Higher spin Killing spinors on 3-dimensional manifolds}}
\date{\today}
\author{Yasushi Homma, Natsuki Imada, and Soma Ohno}

\begin{document}
\maketitle

\begin{abstract}
We define higher spin Killing spinors on Riemannian spin manifolds in arbitrary dimension and study them in detail in dimension three.
We prove a rigidity result for $3$-dimensional manifolds admitting higher spin Killing spinors and give expressions for higher spin Killing spinors on the 3-sphere and the 3-hyperbolic space explicitly. We also investigate the Killing spinor type equation on integral spin bundles.
\end{abstract}

\noindent \textbf{Keywords: }{Killing spinors; Killing tensors; the higher spin Dirac operator.}\\
\noindent \textbf{2020 Mathematics Subject Classification:} {53C27, 53C25, 58J50, 58J60}\\ 

\section{Introduction}
Killing spinors are special sections of the spinor bundle on spin manifolds, which are defined by a first-order differential equation $\nabla_X \varphi = \mu X \cdot \varphi$ for any vector field $X$ and a constant $\mu \in \C$. In particular, when $\mu$ is a non-zero real number, $\varphi$ is called a real Killing spinor.
They have been studied extensively in mathematics and physics. From a mathematical viewpoint, Killing spinors appear naturally in the study of the eigenvalue problem of the Dirac operator. Indeed, Friedrich's inequality for the first eigenvalue of the Dirac operator is well-known and the equality holds if and only if the manifold admits a Killing spinor with real Killing number \cite{BFGK}. 
Manifolds admitting Killing spinors have some special geometric structures. For example, such manifolds must be Einstein. 
Moreover, the classification of manifolds admitting real Killing spinors was obtained by C. B\"ar \cite{BarKilling}. 
According to his results, a simply connected complete spin manifold admitting real Killing spinors is homothetic to one of the following manifolds: the sphere, an Einstein-Sasakian manifold, a 3-Sasakian manifold, a nearly K\"ahler manifold in dimension 6, or a nearly parallel $\mathrm{G}_2$-manifold in dimension 7.
From a physical viewpoint, Killing spinors play important roles in supergravity theory. Killing spinors represent the parameters of preserved supersymmetries of a solution of a physical system \cite{SUGRA,KKSUGRA2025}.

In usual spin geometry, spinor fields are sections of the spinor bundle associated with the spin $\frac{1}{2}$ representation of the spin group.
Recently, spin geometry with higher spin representations has been studied actively \cite{UweHomma, HT, OT23, spin3/2SW, Richtsfeld24}. 
One of the motivations to study higher spin geometry comes from physics.
In 1941, Rarita and Schwinger \cite{RS41} proposed a field equation for particles with arbitrary half-integral spin, which is now called the Rarita-Schwinger equation.

In this paper, we generalize the notion of Killing spinors to spinor fields with higher spin.
After reviewing spin geometry with higher spin, we define the higher spin Killing spinors in \S 3.
We also study some basic properties of higher spin Killing spinors.
In the spin $\frac{1}{2}$ case, Killing spinors are closely related to Killing vector fields.
Indeed, we can construct Killing vector fields from Killing spinors. 
This is a reason why they are called Killing spinors.
We generalize this relation to the higher spin cases in Proposition \ref{prop:higher_Killing_spinor_and_Killingtensor}.
However, in dimension $\geq 4$, it seems difficult to construct non-trivial examples of higher spin Killing spinors. In fact, we could not find any non-trivial examples with non-zero Killing number in dimension $\geq 4$.

So, in \S \ref{sec:3dimcase}, we focus on 3-dimensional manifolds and study the properties of higher spin Killing spinors in detail.
In particular, we generalize three well-known results of usual Killing spinors to the higher spin cases.
First, we give the following rigidity result for $3$-dimensional manifolds admitting higher spin Killing spinors in \S \ref{sec:higher_spin_killing_on_3mfd}.

\newtheorem*{MainTheorem1}{\rm\bf Theorem~\ref{theo:3dim_Killing_and_Einstein}}
\begin{MainTheorem1}
  Let $(M,g)$ be a 3-dimensional spin manifold. If $M$ admits a spin $(j + \frac{1}{2})$ Killing spinor $\varphi$ with Killing number $\mu$, then $(M,g)$ is an Einstein manifold, and hence, $M$ is of constant curvature with $\scal = 24\mu^2$.
\end{MainTheorem1}
This theorem is a reason why we study higher spin Killing spinors on the 3-sphere in \S \ref{sec:higher_killing_on_S3} and the 3-hyperbolic space in \S \ref{sec:higher_killing_on_H3}.
Second, we prove the cone construction for higher spin Killing spinors in \S \ref{sec:cone_construction}. C. B\"ar \cite{BarKilling} proved that there is a one-to-one correspondence between real Killing spinors on a Riemannian spin manifold and parallel spinors on its cone. We generalize this result to the higher spin cases as follows.
\newtheorem*{MainTheorem2}{\rm\bf Theorem~\ref{theo:cone_construction}}
\begin{MainTheorem2}
  Let $(M,g)$ be a 3-dimensional Riemannian spin manifold and $(\cone{M}, \cone{g})$ the cone over $(M,g)$. Then, the following two are equivalent:
  \begin{enumerate}
    \item $M$ admits a spin $(j + \frac{1}{2})$ Killing spinor with Killing number $\mu = \frac{1}{2}$ (resp. $\mu = -\frac{1}{2}$).
    \item The cone $\cone{M}$ admits a parallel spinor on the bundle $\cone{S}_{j,0}$ with helicity $j + \tfrac{1}{2}$ (resp. $\cone{S}_{0,j}$ with $-(j + \tfrac{1}{2})$).
  \end{enumerate}
\end{MainTheorem2}
Third, we give an eigenvalue estimate for the higher spin Dirac operator on compact 3-dimensional manifolds and discuss a relation between the estimate and higher spin Killing spinors in \S \ref{sec:eigenvalue_estimate}.
\newtheorem*{MainTheorem3}{\rm\bf Theorem~\ref{theo:eigenvalue_estimate}}
\begin{MainTheorem3}
  Let $(M,g)$ be a compact 3-dimensional Riemannian spin manifold and 
  \begin{align*}
    r_0 := \min_M q(R) :=& \min \{d \in \R \mid x \in M, d \text{ is an eigenvalue of } q(R)_x \}\\
    =& \max \{d \in \R \mid \forall x \in M, q(R)_x \geq d\}
  \end{align*}
  Then, the first eigenvalue $\lambda^2$ of $D_j^2$ on $\ker T_j^-$ satisfies
  \[\lambda^2 \geq \frac{2j+3}{2j+1}r_0.\]
  The equality holds if and only if $M$ admits a spin $(j + \frac{1}{2})$ Killing spinor.
\end{MainTheorem3}

In \S\ref{sec:higher_killing_on_S3} and \S\ref{sec:higher_killing_on_H3}, we give examples of higher spin Killing spinors on 3-dimensional manifolds of constant curvature. 
In \S \ref{sec:higher_killing_on_S3}, we give a construction of higher spin Killing spinors on the 3-sphere $\sphere{3}$. As a consequence, we know that spin $(j + \tfrac{1}{2})$ Killing spinors on $\sphere{3}$ are obtained from spin $(j - \tfrac{1}{2})$ Killing spinors on $\sphere{3}$ inductively.
In \S \ref{sec:higher_killing_on_H3}, we study higher spin Killing spinors on the 3-dimensional hyperbolic space $\hb{3}$. Consequently, we obtain the explicit expressions of higher spin Killing spinors on $\hb{3}$.

The final section \S\ref{sec:integral_spin} is devoted to the study of a Killing spinor-type equation on the integral spin bundles, which is realized as the bundle of traceless symmetric tensors. 
In \S \ref{sec:Killing_spinor_type_eq_on_int_spin}, we see that solutions to this equation form a special class of traceless Killing tensors.
In \S \ref{sec:Killing_tensor_on_S3}, we focus on the case of $\sphere{3}$ and study a relation between higher spin Killing spinors and Killing tensors.

\section{Spin geometry with higher spin}
\label{sec:preliminary}
Let $(M, g)$ be an $n$-dimensional $(n \geq 3)$ Riemannian spin manifold with a spin structure $P_{\Spin}M$, which is a principal $\Spin(n)$-bundle over $M$ and a double cover of the orthonormal frame bundle $P_{\SO}M$.
We consider the spin $(j + \frac{1}{2})$ unitary representation $\pi_j$ on $W_j$ of $\Spin(n)$ for $j \in \Z_{\geq 0}$. 
Here, the spin $(j + \frac{1}{2})$ representation can be described by its highest weight:
\[W_j = \begin{cases}
  L(j + \tfrac{1}{2}, (\tfrac{1}{2})_{m-1}) & \text{for } n = 2m + 1, \\
  L(j + \tfrac{1}{2}, (\tfrac{1}{2})_{m-2}, \tfrac{1}{2}) \oplus L(j + \tfrac{1}{2}, (\tfrac{1}{2})_{m-2}, -\tfrac{1}{2}) & \text{for } n = 2m.
\end{cases}\]
In the above, $L(a_1, a_2, \ldots, a_k)$ denotes the irreducible representation space of $\Spin(n)$ whose highest weight is $(a_1, a_2, \ldots, a_k)$ and $(\tfrac{1}{2})_k$ denotes a sequence $\tfrac{1}{2}, \ldots, \tfrac{1}{2}$ with length $k$ as an abbreviation.
For example, the spin $\frac{1}{2}$ representation $(\pi_0, W_0)$ is a usual spinor representation. 

The representation $\pi_j$ induces a vector bundle $S_j$ associated with the principal bundle $P_{\Spin} M$. Indeed, we consider the action of $\Spin(n)$ on $P_{\Spin} M \times W_j$ by
\[
\Spin(n) \times (P_{\Spin} M \times W_j) \ni (g, (p, v)) \mapsto (pg^{-1}, \pi_j(g)v) \in P_{\Spin} M \times W_j.
\]
Then, we have a Hermitian vector bundle whose fiber is $W_j$,
\[
S_j := P_{\Spin} M \times_{\pi_j} W_j = (P_{\Spin} M \times W_j)/\Spin(n).
\]
Since $(\pi_0, W_0)$ is a usual spinor representation, $S_0$ is a usual spinor bundle of $M$.
We call a section of $S_j$ a spin $(j + \frac{1}{2})$ field or a spinor field with spin $(j + \frac{1}{2})$.

From now on, we study some first-order differential operators naturally defined on the space of the spin $(j + \frac{1}{2})$ fields $\Gamma(S_j)$.
They are called generalized gradients or Stein–Weiss operators, which are defined by composing the orthogonal bundle projections and the connection $\nabla$ on $S_j$ induced by the Levi-Civita connection.
  
The connection $\nabla$ can be seen as a map
\[
\nabla \colon \Gamma(S_j) \ni \varphi \mapsto \nabla\varphi = \sum \nabla_{e_i} \varphi \otimes e_i \in \Gamma(S_j \otimes TM^{\mathbb{C}}),
\]
where $\{e_i\}_i$ is a local orthonormal frame and $TM^{\mathbb{C}}$ is $TM \otimes \mathbb{C} \simeq T^*M \otimes \mathbb{C}$ by the Riemannian metric $g$. 
The fiber of the vector bundle $S_j \otimes TM^{\mathbb{C}}$ is $W_j \otimes \mathbb{C}^n$, and it is decomposed into 4 components as a representation of $\Spin(n)$:
\[
W_j \otimes \mathbb{C}^n \cong W_{j+1} \oplus W_j \oplus W_{j-1} \oplus W_{j,1},
\]
where $W_{j,1}$ is a $\Spin(n)$-module defined by
\[W_{j,1} =
\begin{cases}
  L(j + \tfrac{1}{2}, \tfrac{3}{2}, (\tfrac{1}{2})_{m-2}) & \text{for } n = 2m+1\\
  L(j + \tfrac{1}{2}, \tfrac{3}{2}, (\tfrac{1}{2})_{m-3}, \tfrac{1}{2}) \oplus L(j + \tfrac{1}{2}, \tfrac{3}{2}, (\tfrac{1}{2})_{m-3}, -\tfrac{1}{2}) & \text{for } n = 2m.
\end{cases}
\]
We note that $W_{j,1}$ does not appear for $n = 3$ or $j = 0$, nor does $W_{j-1}$ for $j=0$. We will study the 3-dimensional case in more detail in \S \ref{sec:3dimcase}.

The space $W_j$ has a $\Spin(n)$-invariant Hermitian inner product (unique up to a constant factor), so that the above decomposition is orthogonal. So, we have orthogonal decomposition of the bundle $S_j \otimes TM^{\C} \cong S_{j+1} \oplus S_j \oplus S_{j-1} \oplus S_{j,1}$, where $S_{j,1}$ is an associated vector bundle with the fiber $W_{j,1}$. 
Let $\Pi_j \colon S_j \otimes TM^{\mathbb{C}} \to S_j$ be the orthogonal projection onto the $S_j$-component.
Composing the connection $\nabla$ and the projection $\Pi_j$, we obtain the so-called \emph{higher spin Dirac operator},
\[
D_j := \Pi_j \circ \nabla \colon \Gamma(S_j) \to \Gamma(S_j \otimes TM^{\mathbb{C}}) \to \Gamma(S_j).
\]
In this manner, we construct four generalized gradients on $\Gamma(S_j)$ and name them as follows:
\begin{align*}
  D_j &\colon \Gamma(S_j) \to \Gamma(S_j) &&\text{the higher spin Dirac operator}, \\
  T^+_j &\colon \Gamma(S_j) \to \Gamma(S_{j+1}) &&\text{the (first) twistor operator}, \\
  T^-_j &\colon \Gamma(S_j) \to \Gamma(S_{j-1}) &&\text{the co-twistor operator},\\
  U_j &\colon \Gamma(S_j) \to \Gamma(S_{j,1}) &&\text{the (second) twistor operator}. 
\end{align*}
Here, we set $U_j = 0$ for $n = 3$, $U_0 = 0$, and $T^-_0 = 0$.

Similarly to the generalized gradients, we can define the action of tangent vectors on spinor fields. For a tangent vector $X \in T_x M\,\, (\forall x \in M)$, we define the \emph{Clifford homomorphism} $p_j(X) \colon (S_j)_x \to (S_j)_x$ by
\[p_j(X)\varphi := \Pi_j(\varphi \otimes X), \quad \text{for } \varphi \in (S_j)_x.\]
We also define other Clifford homomorphisms in the same manner:
\begin{align*}
  p_j(X) \colon (S_j)_x &\to (S_j)_x, & p^+_j(X) \colon (S_j)_x &\to (S_{j+1})_x, \\
  p_{j,1}(X) \colon (S_j)_x &\to (S_{j,1})_x, & p^-_j(X) \colon (S_j)_x &\to (S_{j-1})_x.
\end{align*}
Here, we set $p_{j,1}(X) = 0$ for $n = 3$, and $p_{0,1}(X) = 0$, and $p^-_{0}(X) = 0$.
We can locally express the generalized gradients using the Clifford homomorphisms as follows:
\begin{align*}
  D_j &= \sum_i p_j(e_i) \nabla_{e_i}, & T^+_j &= \sum_i p^+_j(e_i) \nabla_{e_i},\\
  U_j &= \sum_i p_{j,1}(e_i) \nabla_{e_i}, & T^-_j &= \sum_i p^-_j(e_i) \nabla_{e_i},
\end{align*}
where $\{e_i\}_i$ is a local orthonormal frame of $TM$.

\begin{rema}
  \label{rema:twisted_Dirac}
  These operators can be realized as a decomposition of a twisted Dirac operator. 
  We consider the vector bundle $S_0 \otimes \Sym^j_0$ and the twisted Dirac operator on this vector bundle defined by
  \[\sldirac = \sum_{i=1}^n (e_i \cdot \otimes \Id_{\Sym^j_0}) \circ \nabla_{e_i}.\]
  Here, $\Sym^j_0 := \Sym^j_0(TM^{\mathbb{C}})$ is the traceless $j$-th symmetric tensor product of complexified tangent bundle, and $\nabla$ is the connection on $S_0 \otimes \Sym^j_0$ induced from the Levi-Civita connection.
  Since the fiber of $\Sym^j_0$ is $L(j, 0_{m-1})$, the tensor bundle is decomposed as $S_0 \otimes \Sym^j_0 \cong S_j \oplus S_{j-1}$. 
  Then, the twisted Dirac operator $\sldirac$ is decomposed into the following $2 \times 2$ matrix form:
  \[\sldirac = \begin{pmatrix}
       a D_j & b T^+_{j-1} \\
      c T^-_j & d D_{j-1}
    \end{pmatrix},\]
  where $a, b, c, d$ are some non-zero constants depending on $n$ and $j$.
\end{rema}

At the end of this section, we shall show some relations among the Clifford homomorphisms, which are generalizations of the well-known relation for the Clifford multiplication.
\begin{prop}[{\cite{HT}}]
  \label{prop:relative_dimension_formula}
  Let $(\rho, W_{\rho})$ be an irreducible representation of $\Spin(n)$ and $W_{\rho} \otimes \C^n \cong \bigoplus_{\lambda} W_{\lambda}$ be the irreducible decomposition.
  The Clifford homomorphisms $p^{\rho}_{\lambda}(X) \colon W_{\rho} \to W_{\lambda}$ are defined by the orthogonal projection $\Pi^{\rho}_{\lambda} \colon W_{\rho} \otimes \C^n \to W_{\lambda}$ as
  \[p^{\rho}_{\lambda}(X) \varphi := \Pi^{\rho}_{\lambda}(\varphi \otimes X), \quad \text{for } \varphi \in W_{\rho}, X \in \C^n.\]
  Then, the Clifford homomorphisms satisfy the following relations:
  \begin{enumerate}
    \item $\sum_i p^{\rho}_{\lambda}(e_i)^{\ast} p^{\rho}_{\lambda}(e_i) = \frac{\dim W_{\lambda}}{\dim W_{\rho}} \mathrm{id}_{W_{\rho}}$,
    \item $\sum_i p^{\rho}_{\lambda}(e_i) p^{\rho}_{\lambda}(e_i)^{\ast} = \mathrm{id}_{W_{\lambda}}$
    \item $p^{\rho}_{\lambda}(X)^{\ast} = -\sqrt{\frac{\dim W_{\lambda}}{\dim W_{\rho}}} p^{\lambda}_{\rho}(X)$
  \end{enumerate}
  where $\{e_i\}_i$ is an orthonormal basis of $\R^n$, and $p^{\rho}_{\lambda}(e_i)^{\ast}$ is the adjoint operator of $p^{\rho}_{\lambda}(e_i)$ with respect to the inner products of $W_{\rho}$ and $W_{\lambda}$.
\end{prop}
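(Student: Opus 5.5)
Fix an orthonormal basis $\{e_i\}$ of $\R^n$ and view the projection $\Pi^{\rho}_{\lambda}$ as a $\Spin(n)$-equivariant map $W_\rho\otimes\C^n\to W_\lambda$. Its adjoint $\iota_\lambda:=(\Pi^\rho_\lambda)^*\colon W_\lambda\to W_\rho\otimes\C^n$ is the isometric inclusion of the summand $W_\lambda$. The plan is to express all three identities through $\Pi^\rho_\lambda$ and $\iota_\lambda$ and then use Schur's lemma.

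Relation 2 comes first. For $\varphi\in W_\rho$ and $\psi\in W_\lambda$ we have $\langle p^\rho_\lambda(e_i)\varphi,\psi\rangle=\langle\varphi\otimes e_i,\iota_\lambda\psi\rangle$. Writing $\iota_\lambda\psi=\sum_i\psi_i\otimes e_i$, this shows $p^\rho_\lambda(e_i)^*\psi=\psi_i$. Hence
\[\sum_i p^\rho_\lambda(e_i)p^\rho_\lambda(e_i)^*\psi=\Pi^\rho_\lambda\Bigl(\sum_i\psi_i\otimes e_i\Bigr)=\Pi^\rho_\lambda\iota_\lambda\psi=\psi.\]

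Relation 1 follows by Schur. The operator $A:=\sum_i p^\rho_\lambda(e_i)^*p^\rho_\lambda(e_i)$ on $W_\rho$ does not depend on the choice of orthonormal basis, and is therefore $\Spin(n)$-equivariant. Since $W_\rho$ is irreducible, $A=c\,\mathrm{id}$. To find $c$, take traces and use cyclicity:
\[c\dim W_\rho=\operatorname{tr}A=\operatorname{tr}\sum_i p^\rho_\lambda(e_i)p^\rho_\lambda(e_i)^*=\dim W_\lambda,\]
where the last equality is relation 2.

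Relation 3 is the main obstacle, because it requires comparing two different projections, $W_\rho\otimes\C^n\to W_\lambda$ and $W_\lambda\otimes\C^n\to W_\rho$. Define $B\colon W_\lambda\otimes\C^n\to W_\rho$ by $B(\psi\otimes X)=p^\rho_\lambda(X)^*\psi$. This map is equivariant: $p^\rho_\lambda(gX)=\rho_\lambda(g)\,p^\rho_\lambda(X)\,\rho(g)^{-1}$, and taking adjoints with unitarity gives the same covariance for the adjoint. Since $W_\rho$ occurs in $W_\lambda\otimes\C^n$ with multiplicity one, Schur's lemma gives $B=k\,\Pi^\lambda_\rho$ for some scalar $k$, that is, $p^\rho_\lambda(X)^*=k\,p^\lambda_\rho(X)$. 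The multiplicity-one claim holds because $\C^n$ is self-dual and $\Hom(W_\rho,W_\lambda\otimes\C^n)\cong\Hom(W_\rho\otimes\C^n,W_\lambda)$. Part of the obstacle is that this relies on the tensor product decomposition being multiplicity free. That is true for the vector representation, since its weights are $\pm\varepsilon_i$ and $0$, each of multiplicity at most one, but I would cite it.

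To get $|k|$, apply relation 1 twice, once for the pair $(\rho,\lambda)$ and once for $(\lambda,\rho)$:
\[|k|^2\,\mathrm{id}_{W_\lambda}=|k|^2\sum_i p^\lambda_\rho(e_i)^*p^\lambda_\rho(e_i)\cdot\frac{\dim W_\lambda}{\dim W_\rho},\]
which is obtained by inverting relation 1 for $(\lambda,\rho)$. On the other hand, $\sum_i p^\rho_\lambda(e_i)p^\rho_\lambda(e_i)^*=\mathrm{id}_{W_\lambda}$ by relation 2, and this sum equals $|k|^2\sum_i p^\lambda_\rho(e_i)^*p^\lambda_\rho(e_i)=|k|^2\frac{\dim W_\rho}{\dim W_\lambda}\mathrm{id}_{W_\lambda}$. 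Therefore $|k|^2=\dim W_\lambda/\dim W_\rho$.

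The phase of $k$ is a matter of normalization. The inclusions $\iota_\lambda$ are fixed only up to a unit scalar, so one chooses phases of the equivariant isomorphisms so that $k$ is real and negative. That sign is the convention adopted in \cite{HT}, and it reproduces the classical identity $X^*=-X$ for Clifford multiplication when $\rho=\lambda$ is the spinor representation. I would state this phase choice explicitly as part of the definition.
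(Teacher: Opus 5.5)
Your proof is correct. For relation 3 it is the paper's argument: Schur's lemma on a multiplicity-one summand, then $|k|^2=\dim W_\lambda/\dim W_\rho$ from relations 1 and 2, then the phase fixed by the choice of embedding. The only difference is that you dualize to maps $W_\lambda\otimes\C^n\to W_\rho$ instead of $W_\lambda\to W_\rho\otimes\C^n$, and your Schur-plus-trace arguments for relations 1 and 2 give a self-contained proof of what the paper only cites from \cite{HT}.

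One small repair: $X\mapsto p^\rho_\lambda(X)^*$ is conjugate-linear, so $B(\psi\otimes X)=p^\rho_\lambda(X)^*\psi$ is not well defined on the complex tensor product for complex $X$. Define $B(\psi\otimes e_i):=p^\rho_\lambda(e_i)^*\psi$ and extend $\C$-linearly, since relation 3 is a statement for real $X$; this is how the paper's map $\varphi\mapsto\sum_i p^\rho_\lambda(e_i)^*\varphi\otimes e_i$ avoids the issue.
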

\begin{proof}
  For the first two equations see \cite{HT} or \cite{HomBW}.
  For the third equation, we note that there exists a non-zero constant $a \in \C$ such that
  \[p^{\rho}_{\lambda}(X)^{\ast} = a p^{\lambda}_{\rho}(X) \quad \forall X \in TM.\]
  This is because the two maps 
  \begin{align*}
    W_{\lambda} \to W_{\rho} \otimes \C^n, &\quad \varphi \mapsto \sum_i p^{\rho}_{\lambda}(e_i)^{\ast}\varphi \otimes e_i,\\
    W_{\lambda} \to W_{\rho} \otimes \C^n, &\quad \varphi \mapsto \sum_i p^{\lambda}_{\rho}(e_i) \varphi \otimes e_i    
  \end{align*}
  are $\Spin(n)$-equivariant and $W_{\rho} \otimes \C^n$ is multiplicity-free.
  Therefore, by the first and second equations of this proposition, we have $|a|^2 = \frac{\dim W_{\lambda}}{\dim W_{\rho}}$.
  Finally, by choosing the embedding $W_{\lambda} \to W_{\rho} \otimes \C^n$ appropriately (see Remark \ref{rema:choice_of_inclusion}), we can take $a = -\sqrt{\frac{\dim W_{\lambda}}{\dim W_{\rho}}}$.
\end{proof}

\begin{rema}
  \label{rema:choice_of_inclusion}
  The inner product preserving embedding $W_{\lambda} \to W_{\rho} \otimes \C^n$ has ambiguity up to a complex number with norm 1. So, the definition of generalized gradients and Clifford homomorphisms also has the same ambiguity. We choose the embedding so that the third equation in Proposition \ref{prop:relative_dimension_formula} holds. (The other two equations do not depend on the choice of the embedding.)
\end{rema}

\section{Higher spin Killing spinors}
\label{sec:Higher_Killing_spinors}
In this section, we consider an analogue of Killing spinors on higher spin spinor bundle $S_j$.
\begin{defi}
  A non-trivial section $\varphi \in \Gamma(S_j)$ is called a \emph{spin $(j + \frac{1}{2})$ Killing spinor} or \emph{higher spin  Killing spinor} (or simply \emph{Killing spinor}) if it satisfies the equation
\[\nabla_X \varphi = \mu p_j(X) \varphi \quad (\forall X \in TM),\]
where $\mu \in \C$ is a constant. The constant $\mu$ is called the \emph{Killing number} of $\varphi$. In particular, $\varphi$ is called a \emph{real Killing spinor} if $\mu$ is a non-zero real number, an \emph{imaginary Killing spinor} if $\mu$ is a purely imaginary number, and a \emph{parallel spinor} if $\mu = 0$.
\end{defi}

\begin{rema}
  \leavevmode
  \begin{enumerate}
    \item For $j=0$, since the Clifford homomorphism $p_0(X)$ is just the Clifford multiplication $p_0(X)\varphi = \tfrac{1}{\sqrt{n}}X\cdot\varphi$, the spin $\frac{1}{2}$ Killing spinors are the usual Killing spinors.
    \item If $j=0$ or $n=3$, the Killing number $\mu$ is restricted to be either real or purely imaginary. For $n=3$, we prove this fact in Corollary \ref{corr:Killingnum_on_3dim}.
  \end{enumerate}
\end{rema}

It is well-known that usual Killing spinors are eigenspinors of the Dirac operator. 
Similarly, we have the following proposition for the higher spin Killing spinors.

\begin{prop}
  \label{prop:higher_killing_spinor_eigenvalue}
  Let $\varphi$ be a spin $(j + \frac{1}{2})$ Killing spinor with Killing number $\mu$. 
  Then, $\varphi$ is an eigenspinor of the higher spin Dirac operator $D_j$ with eigenvalue $-\mu$.
\end{prop}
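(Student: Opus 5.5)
Substituting the Killing equation into the local expression of the higher spin Dirac operator reduces the claim to a Casimir-type identity for the Clifford homomorphisms. Writing $D_j = \sum_i p_j(e_i)\nabla_{e_i}$ and using $\nabla_{e_i}\varphi = \mu\, p_j(e_i)\varphi$, we get
\[
D_j\varphi = \mu \sum_i p_j(e_i)p_j(e_i)\varphi .
\]
So it suffices to show the pointwise algebraic identity $\sum_i p_j(e_i)p_j(e_i) = -\mathrm{id}_{W_j}$.

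For this I will use Proposition~\ref{prop:relative_dimension_formula} with $\rho=\lambda=W_j$. Part (3) with $\lambda=\rho$ gives $p_j(X)^{\ast} = -p_j(X)$, since the dimension ratio is $1$ and the embedding is normalized as in Remark~\ref{rema:choice_of_inclusion}. Hence
\[
\sum_i p_j(e_i)p_j(e_i) = -\sum_i p_j(e_i)^{\ast}p_j(e_i) = -\tfrac{\dim W_j}{\dim W_j}\,\mathrm{id}_{W_j} = -\mathrm{id}_{W_j}
\]
by part (1). This is independent of the frame, so it holds at every point of $M$ for any local orthonormal frame. Therefore $D_j\varphi = -\mu\varphi$, and since a Killing spinor is non-trivial by definition, $\varphi$ is an eigenspinor with eigenvalue $-\mu$.

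The only delicate point is the applicability of Proposition~\ref{prop:relative_dimension_formula} when $n$ is even. There $W_j$ is a sum of two irreducibles, whereas the proposition is stated for irreducible $\rho$. In that case I would note that Clifford multiplication by a vector interchanges the two half-spin summands, so that $W_j$ is irreducible as a $\mathrm{Pin}$-type (or $\Spin(n)\times$ parity) module. Alternatively, I would apply parts (1) and (3) to the cross components $L(j+\tfrac12,\dots,\pm\tfrac12)\to L(j+\tfrac12,\dots,\mp\tfrac12)$. The two summands have equal dimension, so the ratio is again $1$ and the same computation gives $-\mathrm{id}$ on each summand. As a sanity check, for $j=0$ we have $p_0(X) = \tfrac{1}{\sqrt n}X\cdot$, and $\sum_i p_0(e_i)^2 = \tfrac1n\sum_i e_i\cdot e_i\cdot = -\mathrm{id}$, which recovers the classical fact that a Killing spinor with $\nabla_X\varphi = \mu\, p_0(X)\varphi$ satisfies $D_0\varphi=-\mu\varphi$.
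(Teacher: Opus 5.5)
Your proposal is correct and follows the paper's proof: substitute the Killing equation into $D_j=\sum_i p_j(e_i)\nabla_{e_i}$, then use Proposition~\ref{prop:relative_dimension_formula} (parts (3) and (1) with $\lambda=\rho$) to get $\sum_i p_j(e_i)p_j(e_i)=-\mathrm{id}$. Your treatment of even $n$ also correctly fills in a detail the paper leaves implicit: there $p_j(X)$ swaps the two equal-dimensional summands of $W_j$, and one applies the proposition to the cross components.
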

\begin{proof}
  By the definition of $D_j$ and the Killing spinor equation, we have
  \[
  D_j \varphi = \sum_i p_j(e_i) \nabla_{e_i} \varphi = \mu \sum_i p_j(e_i) p_j(e_i) \varphi.
  \]
  By Proposition \ref{prop:relative_dimension_formula}, we know $\sum_i p_j(e_i)p_j(e_i) = -1$. Thus we obtain $D_j \varphi = -\mu \varphi$.
\end{proof}

Since a higher spin Killing spinor $\varphi \in \Gamma(S_j)$ is parallel with respect to the modified connection $\widetilde{\nabla}_X = \nabla_X - \mu p_j(X)$, we immediately obtain the following proposition.
\begin{prop}
  \label{prop:dim_of_Killingspinors}
  The dimension of the space of spin $(j + \frac{1}{2})$ Killing spinors with Killing number $\mu$ is less than or equal to $\rank(S_j)$. And if $\mu$ is real, then $\langle \varphi,\psi \rangle$ is constant for such spinors $\varphi$ and $\psi$.
\end{prop}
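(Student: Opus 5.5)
The plan is to regard the Killing spinors as parallel sections of the modified connection $\widetilde{\nabla}_X = \nabla_X - \mu p_j(X)$ on $S_j$, as noted just before the statement, and to read off both claims from that.

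\emph{Dimension bound.} Fix a point $x_0 \in M$ (assume $M$ connected). The evaluation map $\varphi \mapsto \varphi(x_0)$ from the space of Killing spinors with Killing number $\mu$ to the fibre $(S_j)_{x_0}$ is linear. I will show it is injective. If $\varphi(x_0)=0$, then for any $x \in M$ take a curve $\gamma$ from $x_0$ to $x$. Along $\gamma$, $\varphi$ satisfies the linear first-order ODE $\nabla_{\dot\gamma}\varphi = \mu p_j(\dot\gamma)\varphi$, which is parallel transport for $\widetilde{\nabla}$. By uniqueness of solutions with zero initial value, $\varphi(x)=0$. Hence the dimension is at most $\dim (S_j)_{x_0} = \rank(S_j)$.

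\emph{Constancy of the inner product for real $\mu$.} Take Killing spinors $\varphi,\psi$ with the same real $\mu$, and compute, using metricity of the Levi-Civita-induced connection on the Hermitian bundle $S_j$:
\[
X\langle \varphi,\psi\rangle = \langle \nabla_X\varphi,\psi\rangle + \langle \varphi,\nabla_X\psi\rangle = \mu\bigl(\langle p_j(X)\varphi,\psi\rangle + \langle \varphi,p_j(X)\psi\rangle\bigr),
\]
where we use $\bar\mu=\mu$ (the inner product is conjugate linear in one slot). It then suffices that $p_j(X)$ is skew-adjoint for real $X$. This follows from Proposition~\ref{prop:relative_dimension_formula}(3) with $\rho=\lambda$, which gives $p_j(X)^\ast = -p_j(X)$. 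So the derivative vanishes for every $X$, and $\langle\varphi,\psi\rangle$ is locally constant, hence constant on connected $M$.

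The only delicate point is this skew-adjointness. One must make sure the chosen normalization of the embedding (Remark~\ref{rema:choice_of_inclusion}) really gives the sign $-1$ in the case $\rho=\lambda$, and that $p_j(X)$ is complex linear with real $X$, so that no conjugation enters. Everything else is standard ODE uniqueness.
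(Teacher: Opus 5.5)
Your proposal is correct and takes the same approach as the paper. The paper deduces the proposition immediately from the fact that Killing spinors are parallel for $\widetilde{\nabla}_X = \nabla_X - \mu p_j(X)$. Your injectivity-of-evaluation argument, and your check that $\widetilde{\nabla}$ is metric for real $\mu$ via $p_j(X)^\ast = -p_j(X)$ (Proposition~\ref{prop:relative_dimension_formula}(3) under the normalization of Remark~\ref{rema:choice_of_inclusion}), are the details the paper leaves implicit, including the tacit assumption that $M$ is connected.
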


It is well-known that for two Killing spinors $\varphi, \psi$ with the same real Killing number, a vector field $X_{\varphi, \psi}$ defined by
\[g(X_{\varphi, \psi}, Y) := \Re\langle Y \cdot \varphi,\psi \rangle \quad (\forall Y \in TM)\]
is a Killing vector field. Here, for $\alpha \in \C$, $\Re\alpha$ means the real part of $\alpha$.
We generalize this construction to the higher spin cases.

\begin{prop}
  \label{prop:higher_Killing_spinor_and_Killingtensor}
  For two higher spin Killing spinors $\varphi, \psi$ with the same real Killing number, a symmetric $(0, m)$-tensor field $K^m_{\varphi, \psi}$ defined by
  \[K^m_{\varphi, \psi}(X_1, \ldots, X_m) := \Re \langle p_j(X_1) \odot\cdots\odot p_j(X_m)\varphi, \psi \rangle\]
  is a Killing tensor field, namely $dK^m_{\varphi, \psi} = 0$ \cite{HMS}. Here, $\odot$ means the symmetrization of the operators, i.e.,
  \[p_j(X_1) \odot\cdots\odot p_j(X_m)\varphi := \frac{1}{m!} \sum_{\sigma \in \mathfrak{S}_m} p_j(X_{\sigma(1)}) \cdots p_j(X_{\sigma(m)})\varphi.\]
\end{prop}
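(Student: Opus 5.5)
The plan is to verify directly that the full symmetrization of $\nabla K^m_{\varphi,\psi}$ vanishes. Here $dK$ denotes, following \cite{HMS}, the symmetrized covariant derivative
\[
(dK)(X_0,\ldots,X_m)=\sum_{k=0}^m (\nabla_{X_k}K)(X_0,\ldots,\widehat{X_k},\ldots,X_m).
\]
Since the Clifford homomorphisms $p_j$ are induced from the $\Spin(n)$-equivariant projection $\Pi_j$, they are parallel. Concretely,
\[
\nabla_Y\bigl(p_j(X)\phi\bigr)=p_j(\nabla_Y X)\phi+p_j(X)\nabla_Y\phi .
\]
Therefore, computing at a point with a frame and vector fields that are parallel there, the derivative falls only on the spinors:
\[
(\nabla_{X_0}K)(X_1,\ldots,X_m)=\Re\bigl(\langle P\,\nabla_{X_0}\varphi,\psi\rangle+\langle P\varphi,\nabla_{X_0}\psi\rangle\bigr),
\]
where $P=p_j(X_1)\odot\cdots\odot p_j(X_m)$. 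Inserting the Killing equation gives
\[
\mu\,\Re\bigl(\langle P\,p_j(X_0)\varphi,\psi\rangle+\langle P\varphi,p_j(X_0)\psi\rangle\bigr).
\]

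The key algebraic input is the skew-adjointness of $p_j(X)$. Taking $\rho=\lambda=W_j$ in Proposition \ref{prop:relative_dimension_formula}(3) gives $p_j(X)^*=-p_j(X)$ for real $X$. Since $\mu$ is real, the second term becomes
\[
\langle P\varphi,p_j(X_0)\psi\rangle=-\langle p_j(X_0)P\varphi,\psi\rangle .
\]
Hence
\[
(\nabla_{X_0}K)(X_1,\ldots,X_m)=\mu\,\Re\bigl\langle [P,p_j(X_0)]\varphi,\psi\bigr\rangle .
\]

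Next I symmetrize over $X_0,\ldots,X_m$. Write $Q=p_j(X_0)\odot\cdots\odot p_j(X_m)$ for the full symmetrization, and note that $P$ depends only on the $X_i$ with $i\neq k$. The identity I need is
\[
\sum_{k=0}^m\Bigl(p_j(X_0)\odot\cdots\widehat{p_j(X_k)}\cdots\odot p_j(X_m)\Bigr)\,p_j(X_k)=(m+1)\,Q,
\]
together with the same identity with $p_j(X_k)$ multiplied on the left. Both hold because each ordering of the $m+1$ operators arises exactly once when one picks the last (respectively first) factor $k$ and then orders the remaining $m$ factors. The $1/m!$ and $1/(m+1)!$ normalizations account for the factor $m+1$. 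Consequently the two sums agree and the commutator terms cancel after symmetrization, giving $dK^m_{\varphi,\psi}=0$.

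The main obstacle is bookkeeping rather than substance: one must check that the symmetrization identity holds exactly with the chosen normalization of $\odot$, and that $\Re$ commutes with summing over permutations. One must also justify the skew-adjointness carefully, since Proposition \ref{prop:relative_dimension_formula}(3) relies on the choice of embedding in Remark \ref{rema:choice_of_inclusion}. The realness of $\mu$ enters only to move $\mu$ across the Hermitian product. For complex $\mu$ the two terms would not cancel, which is why the statement requires a real Killing number.
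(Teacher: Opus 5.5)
Your proposal is correct and takes essentially the same route as the paper. You differentiate, insert the Killing equation, and use the skew-adjointness of $p_j(X)$ together with the reality of $\mu$ to reduce $\nabla_{X_0}K$ to $\mu\,\Re\langle [P,p_j(X_0)]\varphi,\psi\rangle$. You then observe that the two symmetrized sums coincide. Your explicit counting identity just spells out the cancellation the paper asserts in its final line, and your $dK$ differs from the paper's sum over $\mathfrak{S}_{m+1}$ only by the harmless factor $m!$.
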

\begin{proof}
  For any vector field $Y$, we have
  \begin{align*}
    (\nabla_Y K^m_{\varphi, \psi})(X_1, \ldots, X_m) &= \Re \langle p_j(X_1) \odot\cdots\odot p_j(X_m)\nabla_Y\varphi, \psi \rangle + \Re \langle p_j(X_1) \odot\cdots\odot p_j(X_m)\varphi,  \nabla_Y \psi \rangle \\
    &= \mu \Re \langle p_j(X_1) \odot\cdots\odot p_j(X_m) p_j(Y) \varphi, \psi \rangle + \mu \Re \langle (p_j(X_1) \odot\cdots\odot p_j(X_m) )\varphi, p_j(Y) \psi \rangle\\
    &= \mu \Re \langle p_j(X_1) \odot\cdots\odot p_j(X_m) p_j(Y) \varphi, \psi \rangle - \mu \Re \langle p_j(Y) (p_j(X_1) \odot\cdots\odot p_j(X_m) )\varphi, \psi \rangle.
  \end{align*}
  Hence, for any vector fields $X_0, X_1, \ldots, X_m$,
  \begin{align*}
    dK^m_{\varphi, \psi}(X_0, X_1, \ldots, X_m) &= \sum_{\sigma \in \mathfrak{S}_{m+1}} (\nabla_{X_{\sigma(0)}} K^m_{\varphi, \psi})(X_{\sigma(1)}, \ldots, X_{\sigma(m)}) \\
    &= \mu \sum_{\sigma \in \mathfrak{S}_{m+1}} \Re \langle (p_j(X_{\sigma(1)}) \odot\cdots\odot p_j(X_{\sigma(m)}))p_j(X_{\sigma(0)})\varphi, \psi \rangle\\
    &\hspace{50pt} - \mu \sum_{\sigma \in \mathfrak{S}_{m+1}}\Re \langle p_j(X_{\sigma(0)})(p_j(X_{\sigma(1)}) \odot\cdots\odot p_j(X_{\sigma(m)}))\varphi,\psi \rangle \\
    &= 0.
  \end{align*}
\end{proof}
\begin{rema}
  In the spin $\tfrac{1}{2}$ case, we have
  \[K^{2m}_{\varphi, \psi} = 2^m \underbrace{g \odot \cdots \odot g}_m \odot K^0_{\varphi, \psi}, \quad K^{2m+1}_{\varphi, \psi} = 2^m \underbrace{g \odot \cdots \odot g}_{m} \odot K^1_{\varphi, \psi},\]
  where $K^0_{\varphi, \psi}$ is just a real part of the inner product of $\varphi$ and $\psi$,  and $K^1_{\varphi, \psi}$ is a dual 1-form of the Killing vector field $X_{\varphi, \psi}$ induced by $\varphi, \psi$. Hence this construction only becomes valuable in the higher spin cases.
  We discuss a relation between higher spin Killing spinors and Killing tensors in more detail in \S \ref{sec:Killing_tensor_on_S3}.
\end{rema}

Now we should argue the existence of higher spin Killing spinors.

\begin{prop}
  \label{prop:space of the higher Killing spinor}
  Let $\varphi$ be a spin $(j + \frac{1}{2})$ Killing spinor.
  Then $\varphi$ is in $\ker T^+_j \cap \ker T^-_j \cap \ker U_j$.
  In particular, $\varphi$ is in $\ker T^+_j \cap \ker T^-_j$ if $n = 3$.
\end{prop}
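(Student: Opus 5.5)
The approach is to substitute the Killing spinor equation directly into the local expressions of the generalized gradients and then show that the resulting zeroth-order operators vanish for representation-theoretic reasons. For $\lambda \in \{j+1, j-1, (j,1)\}$ write $p_\lambda := p^{j}_{\lambda}$. Using $T^+_j = \sum_i p^+_j(e_i)\nabla_{e_i}$ and its analogues together with $\nabla_{e_i}\varphi = \mu p_j(e_i)\varphi$, we get
\[
T^+_j\varphi = \mu \sum_i p^+_j(e_i)p_j(e_i)\varphi,\qquad
T^-_j\varphi = \mu \sum_i p^-_j(e_i)p_j(e_i)\varphi,\qquad
U_j\varphi = \mu \sum_i p_{j,1}(e_i)p_j(e_i)\varphi .
\]
It therefore suffices to show that $\sum_i p_\lambda(e_i)p_j(e_i)\colon W_j\to W_\lambda$ vanishes for $\lambda\neq j$.

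The key step is that $A_\lambda := \sum_i p_\lambda(e_i)p_j(e_i)$ is independent of the orthonormal basis, because $\sum_i e_i\otimes e_i$ is $\SO(n)$-invariant. Since each Clifford homomorphism is built from $\Spin(n)$-equivariant projections, the map $W_j\otimes\C^n\otimes\C^n \to W_\lambda$, $\varphi\otimes X\otimes Y\mapsto p_\lambda(Y)p_j(X)\varphi$, is equivariant. Contracting with the invariant element $\sum_i e_i\otimes e_i$ therefore yields a $\Spin(n)$-equivariant map $W_j\to W_\lambda$.

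The irreducible summands $W_{j\pm1}$ and $W_{j,1}$ (or its two summands when $n$ is even) are all non-isomorphic to every irreducible component of $W_j$, since their highest weights differ. Schur's lemma then gives $A_\lambda=0$ for each such $\lambda$. Hence $T^\pm_j\varphi=0$ and $U_j\varphi=0$.

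The case $n=3$ needs no separate argument: there $U_j=0$ by convention, so the last claim follows from the first.

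The only point requiring care is the even-dimensional case, where $W_j$ is reducible. There I would check that no summand of $W_\lambda$ is isomorphic to either half-spin summand of $W_j$. This is immediate from the highest weights listed in \S\ref{sec:preliminary}: the first entries $j\pm\tfrac12$ differ from $j+\tfrac12$ in the case of $W_{j\pm1}$, and the second entry $\tfrac32$ differs from $\tfrac12$ in the case of $W_{j,1}$.
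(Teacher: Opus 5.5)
Your proposal is correct and is essentially the paper's proof. You substitute $\nabla_{e_i}\varphi=\mu p_j(e_i)\varphi$ into the local formulas for $T^\pm_j$ and $U_j$, note that $\sum_i p_\lambda(e_i)p_j(e_i)\colon W_j\to W_\lambda$ is $\Spin(n)$-equivariant, and kill it with Schur's lemma; your explicit check of the even-dimensional case, where $W_j$ splits into two summands, is more careful than the paper, which simply asserts that $W_j$ and $W_{j+1}$ are irreducible.
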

\begin{proof}
  Here we only prove $\varphi \in \ker T^+_j$. The other cases can be proved in the same way.
  By the definition of $T^+_j$ and the Killing spinor equation, we have
  \[T^+_j \varphi = \sum_i p^+_j(e_i) \nabla_{e_i} \varphi = \mu \sum_i p^+_j(e_i) p_j(e_i) \varphi.\]
  The map $\sum_i p^+_j(e_i) p_j(e_i)$ is $\Spin(n)$-equivariant from $W_j$ to $W_{j+1}$. Since $W_j$ and $W_{j+1}$ are irreducible and non-isomorphic, Schur's lemma implies $\sum_i p^+_j(e_i) p_j(e_i) = 0$. Thus we obtain $T^+_j \varphi = 0$.
\end{proof}

In this paper, we have obtained examples of Killing spinors with spin $\ge 3/2$ and $\mu \neq 0$ only on 3-dimensional manifolds (see \S \ref{sec:higher_killing_on_S3}, \ref{sec:higher_killing_on_H3}). 
This is because the existence of such spinors on manifolds of dimension 4 or higher imposes extremely strong geometric constraints. 
It is currently unknown whether such manifolds actually exist. 
To illustrate this situation, we present the following proposition.

\begin{prop}
  Let $(M^n, g)$ be an $n$-dimensional Riemannian spin manifold of constant sectional curvature $K = c$. If $M$ admits a non-trivial spin $(j + \frac{1}{2})$ Killing spinor with Killing number $\mu$, then at least one of the following holds: (1) $c = 0$ and $\mu = 0$, (2) $n=3$, (3) $j=0$.
  In particular, the sphere $\sphere{n}$ and the hyperbolic space $\hb{n}$ do not admit any non-trivial spin $(j + \frac{1}{2})$ Killing spinors if $n \geq 4$ and $j \geq 1$.
\end{prop}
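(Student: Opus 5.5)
The plan is to turn the Killing spinor equation into a pointwise algebraic identity through the curvature, and then use representation theory to see that this identity fails when $n\ge 4$ and $j\ge 1$ unless $c=\mu=0$. Let $\varphi$ satisfy $\nabla_X\varphi=\mu p_j(X)\varphi$. Differentiating once more and antisymmetrizing gives, for all $X,Y$,
\[
R(X,Y)\varphi=\mu^2\bigl(p_j(Y)p_j(X)-p_j(X)p_j(Y)\bigr)\varphi .
\]
Here $R(X,Y)$ acts on $S_j$ through $d\pi_j$ applied to the curvature 2-form. Under constant curvature $c$ we have $R(X,Y)=-c\,X\wedge Y$ (up to the sign convention), so the left side is $-c\,d\pi_j(X\wedge Y)\varphi$, where $X\wedge Y\in\so(n)\cong\spin(n)$. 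The identity therefore reads
\[
\bigl(c\,d\pi_j(X\wedge Y)+\mu^2[p_j(X),p_j(Y)]\bigr)\varphi_x=0\quad\text{for all }X,Y .
\]

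The operator $\Phi(X\wedge Y):=[p_j(X),p_j(Y)]$ is a $\Spin(n)$-equivariant map $\Lambda^2\C^n\to\End(W_j)$, and so is $d\pi_j$. When $n\ge4$ and $j\ge1$, I would show that $\Phi$ is \emph{not} proportional to $d\pi_j$. One route is to compute the commutator on a highest weight vector with explicit weight vectors $X=e_1\pm ie_2$, $Y=e_3\pm ie_4$, using the realization $S_j\subset S_0\otimes\Sym^j_0$ from Remark~\ref{rema:twisted_Dirac}. Then $p_j(X)$ is the projection of Clifford multiplication $X\cdot\otimes\Id$ composed with the inclusion, and one checks that $[p_j(e_1),p_j(e_2)]$ and $d\pi_j(e_1\wedge e_2)$ give different ratios on two distinct weight vectors. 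For $n=3$, $\Lambda^2\C^3\cong\C^3$ is irreducible and $W_j\otimes\C^3\supset W_j$ only once, so proportionality does hold; this is consistent with exception (2). For $j=0$ the proportionality is the familiar Clifford identity, which is exception (3).

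Once non-proportionality is known, I consider the set $V\subset\Lambda^2\C^n$ of $\omega$ such that $(c\,d\pi_j(\omega)+\mu^2\Phi(\omega))\varphi_x=0$. The stabilizer of the pointwise data need not be all of $\Spin(n)$, so $V$ is not obviously invariant. I will avoid this issue by differentiating the relation again in a direction $Z$, using the Killing equation, which again gives pure Clifford terms. The map $\omega\mapsto (c\,d\pi_j(\omega)+\mu^2\Phi(\omega))$ is a fixed equivariant element $A\in\Hom(\Lambda^2\C^n\otimes W_j,W_j)$, and we have $A(\omega\otimes\varphi)=0$ together with all its covariant derivatives. Iterating expresses vanishing on $p_j(Z_1)\cdots p_j(Z_k)\varphi_x$ modulo lower-order terms. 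If $\mu\neq0$, the span of these vectors is $\Spin(n)$-invariant, hence all of $W_j$ by irreducibility (or a half-module in even dimension). This forces $A=0$, which contradicts non-proportionality. If $\mu=0$, then $\varphi$ is parallel and $c\,d\pi_j(\omega)\varphi=0$ for all $\omega$; since $W_j$ has no trivial summand, this gives $c=0$, which is case (1). The case $\mu\neq0$ with $A=0$ would require $c\,d\pi_j=-\mu^2\Phi$ identically, and this is impossible once $n\ge4$, $j\ge1$.

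The main obstacle is the explicit non-proportionality computation of $[p_j(X),p_j(Y)]$ versus $d\pi_j(X\wedge Y)$. A cleaner version is to compare Casimir-type traces: evaluate $\sum_{i<k}\Phi(e_i\wedge e_k)\,d\pi_j(e_i\wedge e_k)$ and $\sum_{i<k}\Phi(e_i\wedge e_k)^2$ as scalars using Proposition~\ref{prop:relative_dimension_formula}. The Cauchy--Schwarz equality case then fails exactly when the multiplicity of $\Lambda^2$-type pieces in $\End(W_j)$ exceeds one, which is what happens when $n\ge4$, $j\ge1$. The spanning argument in the previous paragraph is the second delicate point. I would first try the simpler route of taking the trace against $d\pi_j$ at $\varphi_x$ and invoking Proposition~\ref{prop:higher_killing_spinor_eigenvalue}, and fall back on the differentiation argument only if the trace identity loses too much information.
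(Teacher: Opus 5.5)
Your reduction to an algebraic statement is legitimate. The trouble is that the step you call ``the main obstacle'' is the entire content of the proposition, and you do not prove it. Once you know that $A=c\,d\pi_j+\mu^2\Phi$ must vanish identically, the claim $\Phi\notin\C\,d\pi_j$ for $n\ge4$, $j\ge1$ is essentially equivalent to the statement itself. Since $p_j$ is parallel, the curvature of $\widetilde\nabla=\nabla-\mu p_j$ is $R_j(X,Y)+\mu^2\Phi(X\wedge Y)$. So $\Phi=\lambda\,d\pi_j$ with $\lambda\neq0$ would let you choose $\mu\neq0$ making $\widetilde\nabla$ flat on $\sphere{n}$, which gives $\rank S_j$ Killing spinors. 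And $\Phi=0$ would give Killing spinors with every $\mu$ on flat $\R^n$.

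Neither of your justifications establishes this claim. The weight-vector comparison is only announced, not carried out for general $n,j$. The Cauchy--Schwarz criterion as you state it is false. Multiplicity $\ge2$ of the adjoint type in $\End(W_j)$ only means $\dim\Hom_{\Spin(n)}(\Lambda^2\C^n,\End W_j)\ge2$, i.e.\ there is room for $\Phi$ to leave the line $\C\,d\pi_j$, not that it does. For $n=5$, $j=1$ this space is already $2$-dimensional, and nothing in your argument places $\Phi$ off that line. Likewise, a single trace of $A(\omega)\varphi=0$ against $d\pi_j$ gives one linear relation between $c$ and $\mu^2$. That relation only fixes $c/\mu^2$ and can never force $\mu=0$, which is exactly what legitimately happens for $n=3$ or $j=0$.

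To close the gap you need two independent invariant contractions with explicitly computed coefficients. Contract against $d\pi_j$ and against $\Phi$ (equivalently, against $p_j(e_k)p_j(e_l)$ summed over all $k,l$). This gives a $2\times2$ Gram-type system in $(c,\mu^2)$. Cauchy--Schwarz is indeed the right mechanism here, but its strictness must be checked by actually computing three scalars: $\sum_{k<l}d\pi_j(e_k\wedge e_l)^2$, $\sum_{k<l}d\pi_j(e_k\wedge e_l)\Phi(e_k\wedge e_l)$, and the quartic scalar $\sum_{k<l}\Phi(e_k\wedge e_l)^2$. The last one needs relations among Clifford homomorphisms beyond Proposition~\ref{prop:relative_dimension_formula}.

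The paper avoids this algebra entirely. It inserts $\varphi$, with $D_j\varphi=-\mu\varphi$ and $T^\pm_j\varphi=U_j\varphi=0$ (hence $\nabla^*\nabla\varphi=\mu^2\varphi$), into the two constant-curvature Weitzenb\"ock formulas of \cite{HT}. This gives two linear relations in $c$ and $\mu^2$, hence $c=4\mu^2\frac{n-2}{(n+2j-2)(n+2j)}$ and then $4\mu^2j(n-3)(j+n-1)=0$.

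A smaller point concerns your spanning step. As written it is unjustified: nothing makes the span of the vectors $p_j(Z_1)\cdots p_j(Z_k)\varphi_x$ invariant under $\Spin(n)$. It is also unnecessary. Equivariance of $A$ makes the common kernel $\{v\in W_j : A(\omega)v=0\ \forall\omega\}$ a $\Spin(n)$-submodule containing $\varphi_x\neq0$. For odd $n$ this gives $A=0$ directly. For even $n$ and $\mu\neq0$, one differentiation adds $p_j(Z)\varphi_x$ to the kernel, and this vector lies in the other half-spin summand.
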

\begin{proof}
  According to \cite{HT}, when $(M^n, g)$ is of constant sectional curvature $K = c$, the following identity holds on $\Gamma(S_j)$:
  \begin{align*}
    \nabla^{\ast}\nabla + &\left(j(n+j-1) + \frac{n(n-1)}{8}\right)c  \\
    &= \frac{(n+2j)(n-2)}{n+2j-2}D_j^2 + \frac{4(n+j-2)}{n+2j-2}(T^-_j)^{\ast} T^-_j + \left(j(n+j-2) - \frac{n(n-1)}{8}\right)c \\
    &= \frac{4(j+1)}{n+2j}(T_j^+)^{\ast} T_j^+ + \frac{(n+2j-2)(n-2)}{n+2j}D_j^2 + \left((j+1)(n+j-1) - \frac{n(n-1)}{8}\right)c 
  \end{align*}
  Here, $\nabla^{\ast}\nabla = (T^-_j)^{\ast} T^-_j + D_j^2 + (T^+_j)^{\ast} T^+_j + U_j^{\ast}U_j$ is the connection Laplacian on $S_j$.
  Let $\varphi$ be a non-trivial spin $(j + \frac{1}{2})$ Killing spinor with Killing number $\mu$.
  By Proposition \ref{prop:higher_killing_spinor_eigenvalue} and Proposition \ref{prop:space of the higher Killing spinor}, substituting $\varphi$ into the above equations, we obtain
  \begin{align}
    \mu^2 \varphi  + &\left(j(n+j-1) + \frac{n(n-1)}{8}\right)c\varphi \label{eq:sec3prop-1}\\
    &= \mu^2\frac{(n+2j)(n-2)}{n+2j-2}\varphi + \left(j(n+j-2) - \frac{n(n-1)}{8}\right)c \varphi, \label{eq:sec3prop-2}\\
    &= \mu^2\frac{(n+2j-2)(n-2)}{n+2j}\varphi + \left((j+1)(n+j-1) - \frac{n(n-1)}{8}\right)c\varphi.\label{eq:sec3prop-3}
  \end{align}
  From two equations \eqref{eq:sec3prop-1} = \eqref{eq:sec3prop-2} and \eqref{eq:sec3prop-1} = \eqref{eq:sec3prop-3}, we have
  \begin{gather}
      \mu^2\left(1-\frac{(n+2j)(n-2)}{n+2j-2}\right) + \left(j + \frac{n(n-1)}{4}\right)c = 0, \label{eq:sec3prop-4}\\
  \mu^2\left(1-\frac{(n+2j-2)(n-2)}{n+2j}\right) + \left(-(n+j-1) + \frac{n(n-1)}{4}\right)c = 0. \label{eq:sec3prop-5}
  \end{gather}
  Combining \eqref{eq:sec3prop-4} and \eqref{eq:sec3prop-5}, we obtain
  \begin{equation}
    c = 4\mu^2\frac{n-2}{(n+2j-2)(n+2j)}. \label{eq:sec3prop-6}
  \end{equation}
  By substituting \eqref{eq:sec3prop-6} into \eqref{eq:sec3prop-4} and simplifying, we get
  \[4\mu^2 j(n-3)(j+n-1)=0.\]
  Therefore, at least one of the following holds: (1) $\mu = 0$, (2) $n=3$, (3) $j=0$.
  If $\mu = 0$, then we have $c = 0$ by \eqref{eq:sec3prop-6}.
\end{proof}

\section{Spin geometry with higher spin on 3-dimensional manifolds}
\label{sec:3dimcase}
\subsection{Weitzenb\"ock formulas}
\label{sec:ingredients_of_higherspin_on_3dim}
In this subsection we review Weitzenb\"ock formulas on 3-dimensional manifolds discussed in \cite{Hom3dim}.
First, we should investigate representations of $\Spin(3) \cong \SU(2)$ more precisely.
In the 3-dimensional case, the spin $(j + \frac{1}{2})$ representation ($\pi_j$, $W_j$) is the $(2j+2)$-dimensional irreducible representation of $\SU(2)$.
In other words, $W_j$ is an irreducible representation of $\SU(2)$ with highest weight $2j + 1$.
We write the infinitesimal representation of $\pi_j$ by the same symbol $\pi_j \colon \su(2) \to \End(W_j)$.
As mentioned in \S \ref{sec:preliminary}, $W_j \otimes \C^3$ can be decomposed into the three irreducible components:
\[W_j \otimes \C^3 \cong W_{j+1} \oplus W_j \oplus W_{j-1}.\]
The Clifford homomorphisms have been defined by
\begin{align*}
  p_j(X) \varphi &= \Pi_j(\varphi \otimes X), \quad p_j(X)\colon W_j \to W_j,\\
  p^+_{j}(X) \varphi &= \Pi^+_j(\varphi \otimes X), \quad p^+_{j}(X)\colon W_j \to W_{j+1},\\
  p^-_{j}(X) \varphi &= \Pi^-_j(\varphi \otimes X), \quad p^-_{j}(X)\colon W_j \to W_{j-1}.
\end{align*}
In the 3-dimensional case, the Clifford homomorphism $p_j$ is consistent with the infinitesimal representation $\pi_j$ up to a constant.
We set the $\SU(2)$-equivariant isometric inclusion $\iota \colon W_j \to W_j \otimes \C^3$ by
\begin{equation}
  \iota(\varphi) = -\frac{1}{\sqrt{(2j+1)(2j+3)}} \sum_i \pi_j(\sigma_i)\varphi \otimes e_i, \label{eq:embedding_of_W_j}
\end{equation}
where $\sigma_1, \sigma_2, \sigma_3$ are the Pauli matrices (see Remark \ref{rema:identification_su2_and_R3}) and the coefficient $((2j+1)(2j+3))^{-\tfrac{1}{2}}$ comes from the action of Casimir element $c = \sigma_1\sigma_1 + \sigma_2\sigma_2 + \sigma_3\sigma_3$:
  \[\pi_j(c) = \sum_i \pi_j(\sigma_i)\pi_j(\sigma_i) = -(2j+1)(2j+3).\]
  One can easily check that $\iota$ preserves the inner product by using this fact (see also Remark \ref{rema:choice_of_inclusion}).
\begin{prop}
  For all $X \in \R^3 \cong \su(2)$ and $\varphi \in W_j$, we have
  \[p_j(X) \varphi = \frac{1}{\sqrt{(2j+1)(2j+3)}} \pi_j(X) \varphi.\]
\end{prop}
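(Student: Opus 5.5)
The key observation is that $p_j(X)\varphi = \Pi_j(\varphi\otimes X)$ is the orthogonal projection onto the image of the isometric embedding $\iota$ of \eqref{eq:embedding_of_W_j}. Indeed, since $\iota$ is an $\SU(2)$-equivariant isometry onto the $W_j$-component of $W_j\otimes\C^3$, we have $\Pi_j = \iota^{\ast}$, once $W_j$ is identified with its image $\iota(W_j)$ as in Remark \ref{rema:choice_of_inclusion}. So the plan is to compute $\iota^{\ast}(\varphi\otimes X)$ directly. For any $\psi\in W_j$,
\[\langle \Pi_j(\varphi\otimes X),\psi\rangle = \langle \varphi\otimes X, \iota(\psi)\rangle = -\frac{1}{\sqrt{(2j+1)(2j+3)}}\sum_i \langle \varphi, \pi_j(\sigma_i)\psi\rangle\, g(X,e_i).\]
Here we use the identification $\R^3\cong\su(2)$, $e_i\leftrightarrow\sigma_i$, of Remark \ref{rema:identification_su2_and_R3}, and we take the inner product on $W_j\otimes\C^3$ to be the tensor product of the invariant Hermitian product on $W_j$ with the standard one on $\C^3$. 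Since $X$ is real, the sum over $i$ becomes $\langle\varphi,\pi_j(X)\psi\rangle$.

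Next we use unitarity of $\pi_j$. Because $\pi_j$ is a unitary representation, the infinitesimal action $\pi_j(X)$ of a real element $X\in\su(2)$ is skew-Hermitian, so $\langle\varphi,\pi_j(X)\psi\rangle = -\langle \pi_j(X)\varphi,\psi\rangle$. Substituting this, the two minus signs cancel and we get
\[\langle \Pi_j(\varphi\otimes X),\psi\rangle = \frac{1}{\sqrt{(2j+1)(2j+3)}}\langle \pi_j(X)\varphi,\psi\rangle .\]
Since this holds for every $\psi\in W_j$, the claimed formula follows.

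The only delicate point is bookkeeping of conventions. First, the Hermitian product must be linear in the slot we use, so that $X$ passes through as a real coefficient. Second, the identification $e_i\leftrightarrow\sigma_i$ must be the one fixed in Remark \ref{rema:identification_su2_and_R3}, so that the $\sigma_i$ are the real basis elements of $\su(2)$ and are therefore skew-Hermitian under $\pi_j$. That isometricity of $\iota$ is what lets us write $\Pi_j=\iota^{\ast}$ is already noted in the text, via the Casimir value $\pi_j(c)=-(2j+1)(2j+3)$.

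As a consistency check, $\sum_i p_j(e_i)p_j(e_i) = \frac{1}{(2j+1)(2j+3)}\pi_j(c) = -1$. This agrees with the identity used in Proposition \ref{prop:higher_killing_spinor_eigenvalue}, and so confirms the normalisation and the sign.
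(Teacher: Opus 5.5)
Your proposal is correct and matches the paper's proof step for step. Both pair $p_j(X)\varphi=\Pi_j(\varphi\otimes X)$ with an arbitrary $\psi$ through the isometric embedding $\iota$ and collapse $\sum_i \langle\varphi,\pi_j(\sigma_i)\psi\rangle\langle X,e_i\rangle$ to $\langle\varphi,\pi_j(X)\psi\rangle$. The sign is then fixed by the skew-adjointness of $\pi_j(X)$.
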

\begin{proof}
  For all $\psi \in W_j$, we have 
  \begin{align*}
    \langle p_j(X)\varphi, \psi \rangle = \langle \varphi \otimes X, \iota(\psi) \rangle &= -\frac{1}{\sqrt{(2j+1)(2j+3)}} \sum_i \langle \varphi, \pi_j(\sigma_i) \psi \rangle \langle X, e_i \rangle \\
    &= -\frac{1}{\sqrt{(2j+1)(2j+3)}} \langle \varphi, \pi_j(X) \psi \rangle\\
    &= \frac{1}{\sqrt{(2j+1)(2j+3)}} \langle \pi_j(X)\varphi, \psi \rangle.
  \end{align*}
\end{proof}

\begin{rema}
  \label{rema:identification_su2_and_R3}
  The identification $\R^3 \cong \su(2)$ is given by Pauli matrices $\sigma_1, \sigma_2, \sigma_3$:
  \[
  e_1 \mapsto \sigma_1 = \begin{pmatrix}
    \sqrt{-1} & 0 \\
    0 & -\sqrt{-1}
  \end{pmatrix}, \quad
  e_2 \mapsto \sigma_2 = \begin{pmatrix}
    0 & 1 \\
    -1 & 0
  \end{pmatrix}, \quad
  e_3 \mapsto \sigma_3 = \begin{pmatrix}
    0 & \sqrt{-1} \\
    \sqrt{-1} & 0
  \end{pmatrix},
  \]
  where $\{e_1, e_2, e_3\}$ is the standard basis of $\R^3$.
\end{rema}

Since $\pi_j$ differs from $p_j$ only by a constant factor and is more convenient for calculations, we will use $\pi_j$ as the Clifford homomorphism in what follows.
For simplicity, we normalize the other Clifford homomorphisms as follows:
  \[\pi^+_{j}(X) := \sqrt{\frac{4(j+1)}{2j+3}} p^+_{j}(X), \quad \pi^-_{j}(X) := \sqrt{\frac{4(j+1)}{2j+1}} p^-_{j}(X).\]
We also normalize the higher spin Dirac operator and two twistor operators as follows:
\begin{align*}
  D_j &:= \frac{1}{2j+1} \sum_i \pi_j(e_i)\nabla_{e_i} \colon \Gamma(S_j) \to \Gamma(S_j), \\
  T^{\pm}_j &:= \sum_i\pi_j^{\pm}(e_i)\nabla_{e_i} \colon \Gamma(S_j) \to \Gamma(S_{j \pm 1}).
\end{align*}

\begin{prop}
  Under our normalization, we have
  \[\pi_j(X)^{\ast} = -\pi_j(X), \quad \pi^+_j(X)^* = -\pi^-_{j+1}(X), \quad \pi^-_j(X)^* = -\pi^+_{j-1}(X).\]
  Thus the differential operators satisfy
  \[(D_j)^* = D_j,\quad (T^+_j)^* = T^-_{j+1}, \quad (T^-_j)^* = T^+_{j-1},\]
  where $P^\ast$ is the formal adjoint operator for a differential operator $P$.
\end{prop}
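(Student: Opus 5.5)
The plan is to derive all three identities from Proposition \ref{prop:relative_dimension_formula}(3), combined with the explicit normalizations, and then obtain the adjoint relations for the operators by the standard divergence argument.

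\textbf{Adjoints of the Clifford homomorphisms.} For $\pi_j$, we use that $\pi_j$ is a unitary representation of $\SU(2)$. Its infinitesimal representation therefore takes values in skew-Hermitian endomorphisms, so $\pi_j(X)^\ast=-\pi_j(X)$ for every $X\in\R^3\cong\su(2)$. Alternatively, this follows from Proposition \ref{prop:relative_dimension_formula}(3) with $\lambda=\rho$, since $p_j$ is a real multiple of $\pi_j$.

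For $\pi^+_j$, we apply Proposition \ref{prop:relative_dimension_formula}(3) with $\rho=W_j$ and $\lambda=W_{j+1}$. Using $\dim W_j=2j+2$, this gives
\[
p^+_j(X)^\ast=-\sqrt{\tfrac{2j+4}{2j+2}}\,p^-_{j+1}(X).
\]
We then substitute $p^+_j=\sqrt{\tfrac{2j+3}{4(j+1)}}\,\pi^+_j$ and $p^-_{j+1}=\sqrt{\tfrac{2j+3}{4(j+2)}}\,\pi^-_{j+1}$, noting that the normalization of $\pi^-_{j+1}$ uses $2(j+1)+1=2j+3$. Collecting constants, we need
\[
\sqrt{\tfrac{4(j+1)}{2j+3}}\cdot\sqrt{\tfrac{j+2}{j+1}}\cdot\sqrt{\tfrac{2j+3}{4(j+2)}}=1,
\]
which indeed holds, so $\pi^+_j(X)^\ast=-\pi^-_{j+1}(X)$. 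The third identity follows by taking adjoints of the second with $j$ replaced by $j-1$.

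This constant bookkeeping is the only real check in the argument. One must also be careful that the sign convention is the one fixed via Remark \ref{rema:choice_of_inclusion}, and that both $p^+_j$ and $p^-_{j+1}$ are defined through embeddings chosen consistently, so that Proposition \ref{prop:relative_dimension_formula}(3) holds simultaneously for the pair $(\rho,\lambda)$ and $(\lambda,\rho)$. Since (3) for one pair implies (3) for the reversed pair by taking adjoints, the choices are compatible.

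\textbf{Formal adjoints of the operators.} Let $\varphi\in\Gamma(S_j)$ and $\psi\in\Gamma(S_{j+1})$ have compact support. Fix a point and choose a local orthonormal frame $\{e_i\}$ that is synchronous there, i.e. $\nabla e_i=0$ at that point. Because $\pi^\pm_j$ is parallel (it is induced by an equivariant map), we get
\[
\langle T^+_j\varphi,\psi\rangle=\sum_i\langle \nabla_{e_i}\varphi,\pi^+_j(e_i)^\ast\psi\rangle=-\sum_i\langle\nabla_{e_i}\varphi,\pi^-_{j+1}(e_i)\psi\rangle .
\]
By metric compatibility, this equals
\[
-\sum_i e_i\langle\varphi,\pi^-_{j+1}(e_i)\psi\rangle+\langle\varphi,T^-_{j+1}\psi\rangle .
\]
The first term is $-\operatorname{div}V$, where $V$ is the vector field defined by $g(V,X)=\langle\varphi,\pi^-_{j+1}(X)\psi\rangle$. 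It integrates to zero, which gives $(T^+_j)^\ast=T^-_{j+1}$. The relation $(T^-_j)^\ast=T^+_{j-1}$ follows either by taking adjoints of this one or by the identical computation.

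For $D_j$, the same computation with $\pi_j(e_i)^\ast=-\pi_j(e_i)$ yields $\langle D_j\varphi,\psi\rangle=\langle\varphi,D_j\psi\rangle$ up to a divergence, using that the prefactor $\frac{1}{2j+1}$ is real. This shows $D_j$ is formally self-adjoint.
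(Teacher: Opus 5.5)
Your proposal is correct and follows essentially the same route as the paper. The paper also uses unitarity of $\pi_j$, then Proposition~\ref{prop:relative_dimension_formula}(3) together with the normalization constants to get $\pi^+_j(X)^\ast=-\pi^-_{j+1}(X)$, and then the local formula $-\sum_i \pi^\pm_j(e_i)^\ast\nabla_{e_i}$ for the formal adjoints, which you justify explicitly through the divergence argument.
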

\begin{proof}
  The first equation is clear since $\pi_j$ is a unitary representation of $\su(2)$.
  We only prove the second equation.
  By Proposition \ref{prop:relative_dimension_formula}, we have
  \[\pi_j^+(X)^{\ast} = \sqrt{\frac{4(j+1)}{2j+3}} p^+_{j}(X)^{\ast} = -\sqrt{\frac{4(j+1)}{2j+3}}\sqrt{\frac{2j+4}{2j+2}}p^-_{j+1}(X) = -\sqrt{\frac{4(j+2)}{2j+3}}p^-_{j+1}(X) = -\pi^-_{j+1}(X).\]
  The formal adjoint operator of $T^+_j$ can be computed locally
  \[(T^+_j)^{\ast} = -\sum_i \pi_j^+(e_i)^{\ast} \nabla_{e_i} = \sum_i \pi^-_{j+1}(e_i) \nabla_{e_i} = T^-_{j+1}.\]
  The other equations can be proved in the same way.
\end{proof}

The tangent bundle $TM$ admits a Lie bracket, since $TM \cong \Spin(M) \times_{\Ad} \su(2)$. The Lie bracket $[X,Y]_{\g}$ is induced from the Lie algebra $\su(2)$ fiberwise. We take positively oriented local orthonormal frame $\{e_1, e_2, e_3\}$ of $TM$. Then these satisfy the following relations:
\[[e_1, e_2]_{\g} = 2e_3, \quad [e_2, e_3]_{\g} = 2e_1, \quad [e_3, e_1]_{\g} = 2e_2.\]
This relation can be expressed more simply by using the Hodge star operator $\ast$ as follows:
\begin{equation}
  \label{eq:g-bracket}
  \frac{1}{2}[X, Y]_{\g} = \ast (X \wedge Y).
\end{equation}
Here, we identify $T^{\ast}M \cong TM$ by the metric $g$.

A basic tool in spin geometry is Clifford algebra, which is an algebra generated by the principal symbol of the Dirac operator. Similarly it is important to study how $\pi_j, \pi_j^+$ and $\pi_j^-$ are related to each other. From universal Weitzenb\"ock formula \cite{HomBW} or a direct calculation in \cite{Hom3dim}, we know the following lemma.
\begin{lemm}
  \label{lemma:algebraic_weitzenbock_formula}
  For any $j \geq 0$, the Clifford homomorphisms $\pi_j, \pi_j^+, \pi_j^-$ satisfy the following two equations:
  \begin{enumerate}
    \item 
      \[\frac{2j+3}{4(j+1)}\pi_{j+1}^-(e_k)\pi_j^+(e_l) + \frac{1}{(2j+3)(2j+1)}\pi_j(e_k)\pi_j(e_l)+\frac{2j+1}{4(j+1)}\pi_{j-1}^+(e_k)\pi_{j}^-(e_l) = - \delta_{kl}\]
    \item
      \[\hspace{-30pt}-\left(j+\frac{1}{2}\right)\frac{2j+3}{4(j+1)}\pi_{j+1}^-(e_k)\pi_j^+(e_l) + \frac{1}{(2j+3)(2j+1)}\pi_j(e_k)\pi_j(e_l) + \left(j+\frac{3}{2}\right)\frac{2j+1}{4(j+1)}\pi_{j-1}^+(e_k)\pi_{j}^-(e_l) = \frac{1}{4}\pi_j([e_k, e_l]_{\g})\]
  \end{enumerate}
\end{lemm}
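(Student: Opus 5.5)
The plan is to read both identities as statements about the endomorphism $\sum_\lambda (\text{const})\,\pi^{\lambda}(e_k)^{\ast}\pi^{\lambda}(e_l)$ of $W_j$, and to obtain them from the decomposition of $\varphi\otimes e_l$ in $W_j\otimes\C^3 = W_{j+1}\oplus W_j\oplus W_{j-1}$.

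\textbf{First identity.} Since the three components are orthogonal and the decomposition is complete, the projections give
\[\sum_{\lambda} \bigl(p^{j}_{\lambda}(e_k)\bigr)^{\ast} p^{j}_{\lambda}(e_l) = \delta_{kl}\,\mathrm{id}_{W_j}.\]
To see this, note that $\sum_\lambda \Pi_\lambda^{\ast}\Pi_\lambda=\mathrm{id}$ on $W_j\otimes\C^3$. Pairing $\varphi\otimes e_l$ against $\psi\otimes e_k$ then gives $\langle\varphi,\psi\rangle\delta_{kl}$.

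Next I rewrite each adjoint with Proposition \ref{prop:relative_dimension_formula}(3), that is, $p^{j}_\lambda(X)^{\ast} = -\sqrt{\dim W_\lambda/\dim W_j}\,p^{\lambda}_j(X)$. The dimensions here are $2j+2$, $2j+4$ and $2j$. Then I convert to the normalized $\pi$'s:
\begin{itemize}
\item $p_j=\pi_j/\sqrt{(2j+1)(2j+3)}$,
\item $p^+_j=\sqrt{(2j+3)/4(j+1)}\,\pi^+_j$,
\item $p^-_j=\sqrt{(2j+1)/4(j+1)}\,\pi^-_j$.
\end{itemize}
For the $W_{j+1}$ term the coefficient is $-\sqrt{\tfrac{2j+4}{2j+2}}\sqrt{\tfrac{2j+3}{4(j+2)}}\sqrt{\tfrac{2j+3}{4(j+1)}} = -\tfrac{2j+3}{4(j+1)}$. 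The other two terms are analogous, and the $W_j$ term is $-\tfrac{1}{(2j+1)(2j+3)}\pi_j(e_k)\pi_j(e_l)$. Multiplying by $-1$ yields (1). This step is pure bookkeeping.

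\textbf{Second identity.} Here I need a second linear relation among the same three operators. I would use the Casimir-type operator $A=\sum_i \pi_j(\sigma_i)\otimes \mathrm{ad}(\sigma_i)$ acting on $W_j\otimes\C^3$, where $\mathrm{ad}$ is the adjoint action on $\C^3\cong\su(2)_\C$. It equals $\tfrac12\bigl(C_{\mathrm{tot}}-C_{W_j}-C_{\C^3}\bigr)$, so it acts on each summand $W_\lambda$ by a scalar. With $\pi(c)=-(2s)(2s+2)$ for highest weight $2s$, this gives $C_{W_{j+1}}=-(2j+3)(2j+5)$, $C_{W_j}=-(2j+1)(2j+3)$, $C_{W_{j-1}}=-(2j-1)(2j+1)$ and $C_{\C^3}=-8$. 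The resulting eigenvalues are $-(2j+1)$, $4$ and $2j+3$, which, up to the normalization of the right-hand side, are proportional to $-(j+\tfrac12)$, $1$ and $(j+\tfrac32)$, matching the pattern in (2).

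Then I compute $\langle A(\varphi\otimes e_l),\psi\otimes e_k\rangle$ directly. Using $\mathrm{ad}(\sigma_i)e_l=[e_i,e_l]_\g=2\ast(e_i\wedge e_l)$, this gives a combination of $\langle\pi_j(e_i)\varphi,\psi\rangle\epsilon_{ilk}$, i.e.\ a multiple of $\pi_j([e_k,e_l]_\g)$. Comparing with the spectral expression $\sum_\lambda a_\lambda\,(p^{j}_\lambda)^{\ast}(e_k)p^{j}_\lambda(e_l)$ and converting with the same constants as above should give (2). If the constants don't match, I can add a multiple of (1) to correct them.

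\textbf{Main obstacle.} I expect the hard part to be tracking signs and factors of $2$: the $[\cdot,\cdot]_\g$ normalization, the Pauli-matrix Casimir convention, and the sign choice of Remark \ref{rema:choice_of_inclusion}. As a safeguard I would check (2) at $j=0$, where the $\pi^-$ term vanishes and $\pi_0$ is Clifford multiplication. There it should reduce to the Clifford relation $e_ke_l=-\delta_{kl}+\tfrac12[e_k,e_l]$ restricted appropriately.
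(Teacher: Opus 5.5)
Your approach is essentially the one the paper relies on. The paper gives no proof of its own; it quotes the universal Weitzenb\"ock formula of \cite{HomBW}, and that is exactly your argument. Identity (1) comes from completeness of $W_j\otimes\C^3=W_{j+1}\oplus W_j\oplus W_{j-1}$. Identity (2) comes from the operator $A=\tfrac12(C_{\mathrm{tot}}-C_{W_j}-C_{\C^3})$, whose eigenvalues are the conformal weights. Your treatment of (1), including the constant $-\tfrac{2j+3}{4(j+1)}$, is correct.

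There is an arithmetic slip in (2). The eigenvalues of $A$ on $W_{j+1},W_j,W_{j-1}$ are $-2(2j+1)$, $4$, $2(2j+3)$, not $-(2j+1)$, $4$, $2j+3$. For example, $\tfrac12\bigl(-(2j+3)(2j+5)+(2j+1)(2j+3)+8\bigr)=-2(2j+1)$.

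The correct values are exactly $4\cdot\bigl(-(j+\tfrac12)\bigr)$, $4\cdot 1$, $4(j+\tfrac32)$. Your values are not in that ratio. For $j\ge 1$, adding a multiple of (1) would not repair this, because it shifts all three coefficients by the same amount.

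With the correct eigenvalues no correction is needed:
\begin{itemize}
\item The spectral side $\sum_\lambda a_\lambda\,p^j_\lambda(e_k)^{\ast}p^j_\lambda(e_l)$ equals $-4$ times the left-hand side of (2).
\item The direct side, using $g([e_i,e_l]_\g,e_k)=2\epsilon_{ilk}$, is $2\sum_i\epsilon_{ilk}\pi_j(e_i)=-\pi_j([e_k,e_l]_\g)$.
\end{itemize}
So (2) follows exactly, sign included. Your $j=0$ sanity check would have caught the slip: combined with (1), the wrong eigenvalues give $\pi_0(e_k)^2=-\tfrac35$ instead of $-1$.
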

By Lemma \ref{lemma:algebraic_weitzenbock_formula}, we also have
\begin{equation}
  \frac{1}{2(2j+1)}\pi_j(e_k)\pi_j(e_l) + \frac{2j+1}{2}\pi^+_{j-1}(e_k)\pi^-_j(e_l) = \frac{1}{4}\pi_j([e_k, e_l]_{\g}) - \left(j+\frac{1}{2}\right)\delta_{kl}.  \label{eq:comm_rel_of_pipm}
\end{equation}
Lemma \ref{lemma:algebraic_weitzenbock_formula} gives relations between the generalized gradients.
\begin{prop}
  \label{prop:Bochner-Weitzenbock}
  For any $j \geq 0$, the generalized gradients on $S_j$ satisfy the following relations:
  \begin{enumerate}
    \item \[\frac{2j+3}{4(j+1)}T^-_{j+1} T^+_j + \frac{2j+1}{2j+3}D_j^2 + \frac{2j+1}{4(j+1)}T^+_{j-1} T^-_j = \nabla^{\ast} \nabla,\]
  \item \begin{equation}
     -\left(j+\frac{1}{2}\right)\frac{2j+3}{4(j+1)}T^-_{j+1} T^+_j + \frac{2j+1}{2j+3}D_j^2 + \left(j+\frac{3}{2}\right)\frac{2j+1}{4(j+1)}T^+_{j-1} T^-_j = q(R). \label{eq:wetzenbock_formula}
  \end{equation}
  \end{enumerate}
  A curvature action $q(R) \in \Gamma(\End(S_j))$ is a symmetric endomorphism on $S_j$ defined by
  \[q(R) = \frac{1}{8}\sum_{k,l} \pi_j([e_k, e_l]_{\g})R_j(e_k, e_l),\]
  where $R$ is the Riemannian curvature tensor and $R_j$ is the curvature tensor of the bundle $S_j$.
\end{prop}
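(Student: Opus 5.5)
The plan is to derive both identities of Proposition \ref{prop:Bochner-Weitzenbock} from the pointwise algebraic relations of Lemma \ref{lemma:algebraic_weitzenbock_formula}. We apply those relations to the second covariant derivative $\nabla^2_{e_k,e_l}\varphi$ and sum over $k,l$. At a fixed point $x$ we take a local orthonormal frame $\{e_i\}$ that is synchronous at $x$, i.e.\ $(\nabla e_i)_x=0$. Since the Clifford homomorphisms $\pi_j,\pi_j^\pm$ are induced from $\Spin(3)$-equivariant maps, they are parallel. Hence at $x$ each composition of generalized gradients can be written as
\[T^-_{j+1}T^+_j=\sum_{k,l}\pi^-_{j+1}(e_k)\pi^+_j(e_l)\nabla^2_{e_k,e_l},\qquad D_j^2=\frac{1}{(2j+1)^2}\sum_{k,l}\pi_j(e_k)\pi_j(e_l)\nabla^2_{e_k,e_l},\]
and similarly for $T^+_{j-1}T^-_j$. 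Recall also that $\nabla^\ast\nabla=-\sum_k\nabla^2_{e_k,e_k}$.

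For the first identity we contract relation (1) of Lemma \ref{lemma:algebraic_weitzenbock_formula} against $\nabla^2_{e_k,e_l}$ and sum. The right-hand side gives $-\sum_k\nabla^2_{e_k,e_k}=\nabla^\ast\nabla$. On the left, the coefficient $\frac{1}{(2j+3)(2j+1)}$ of the $\pi_j\pi_j$ term becomes $\frac{(2j+1)^2}{(2j+1)(2j+3)}=\frac{2j+1}{2j+3}$ in front of $D_j^2$, as required. The two $T$-terms appear with exactly the stated coefficients. For the second identity we do the same with relation (2). Since $[e_k,e_l]_{\g}$ is antisymmetric in $k,l$, only the antisymmetric part of $\nabla^2_{e_k,e_l}$ contributes, namely $\frac12 R_j(e_k,e_l)$. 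The right-hand side therefore becomes
\[\frac14\sum_{k,l}\pi_j([e_k,e_l]_{\g})\,\frac12R_j(e_k,e_l)=q(R).\]

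Finally, $q(R)$ is symmetric. This follows because $\pi_j([e_k,e_l]_{\g})$ and $R_j(e_k,e_l)$ are both skew-Hermitian and $q(R)$ equals the left-hand side of \eqref{eq:wetzenbock_formula}. That left-hand side is formally self-adjoint by $(T^+_j)^\ast=T^-_{j+1}$ and $D_j^\ast=D_j$, and a zeroth-order formally self-adjoint operator is a symmetric endomorphism.

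The only delicate point is the sign convention for the curvature, $R_j(X,Y)=\nabla^2_{X,Y}-\nabla^2_{Y,X}$, together with the factor $\frac12$ coming from antisymmetrization. We will fix the convention so that it matches the definition of $q(R)$. Everything else is bookkeeping of the normalizations $D_j=\frac1{2j+1}\sum\pi_j(e_i)\nabla_{e_i}$ and $T_j^\pm=\sum\pi^\pm_j(e_i)\nabla_{e_i}$.
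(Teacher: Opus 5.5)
Your proposal is correct and follows essentially the same route as the paper. Both take a synchronous frame at $x$, contract the two algebraic identities of Lemma \ref{lemma:algebraic_weitzenbock_formula} against the second derivatives, and antisymmetrize to get $\frac12 R_j(e_k,e_l)$ and hence $q(R)$. Your one deviation, proving symmetry of $q(R)$ via formal self-adjointness of the left-hand side (from $(T^+_j)^\ast=T^-_{j+1}$ and $D_j^\ast=D_j$), is actually more complete than the paper's appeal to skew-adjointness of the two factors, which by itself would also need the pair symmetry of the curvature tensor.
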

\begin{proof}
  We prove the first equation.
  Fix a point $x \in M$ and take a local orthonormal frame $\{e_i\}$ such that $\nabla e_i = 0$ at $x$.
  Then, at the point $x$, we have
    \begin{gather*}
      T^-_{j+1} T^+_j = \sum_{k,l} \pi^-_{j+1}(e_k) \pi^+_j(e_l) \nabla_{e_k} \nabla_{e_l},\\
      D_j^2 = \frac{1}{(2j+1)^2} \sum_{k,l} \pi_j(e_k) \pi_j(e_l) \nabla_{e_k} \nabla_{e_l},\\
      T^+_{j-1} T^-_j = \sum_{k,l} \pi^+_{j-1}(e_k) \pi^-_j(e_l) \nabla_{e_k} \nabla_{e_l}.
    \end{gather*}
  By using the first equation in Lemma \ref{lemma:algebraic_weitzenbock_formula}, we have
  \begin{align*}
    &\frac{2j+3}{4(j+1)}T^-_{j+1} T^+_j + \frac{2j+1}{2j+3}D_j^2 + \frac{2j+1}{4(j+1)}T^+_{j-1} T^-_j \\
    &= \sum_{k,l} \left(\frac{2j+3}{4(j+1)}\pi^-_{j+1}(e_k) \pi^+_j(e_l) + \frac{1}{(2j+3)(2j+1)}\pi_j(e_k) \pi_j(e_l) + \frac{2j+1}{4(j+1)}\pi^+_{j-1}(e_k) \pi^-_j(e_l)\right) \nabla_{e_k} \nabla_{e_l} \\
    &= \sum_{k} -\nabla_{e_k} \nabla_{e_k} = \nabla^{\ast} \nabla.
  \end{align*}
  The second equation can be proved in the same way from the second equation in Lemma \ref{lemma:algebraic_weitzenbock_formula}.
  For the right hand side, we note that
  \[\frac{1}{4} \sum_{k,l} \pi_j([e_k, e_l]_{\g}) \nabla_{e_k} \nabla_{e_l} = \frac{1}{8} \sum_{k,l} \pi_j([e_k, e_l]_{\g}) \left(\nabla_{e_k} \nabla_{e_l} - \nabla_{e_l} \nabla_{e_k}\right) = \frac{1}{8} \sum_{k,l} \pi_j([e_k, e_l]_{\g})R_j(e_k, e_l).\]
  Symmetry of $q(R)$ follows from the fact that $\pi_j([e_k, e_l]_{\g})$ and $R_j(e_k, e_l)$ are skew-adjoint.
\end{proof}

Let us consider the curvature tensor $R_j$ on $S_j$.
With respect to a local orthonormal frame $\{e_1, e_2, e_3\}$, $R_j$ can be locally expressed as follows:
\begin{align*}
  R_j(X,Y) &= \frac{1}{2}\sum_{k,l} g( R(X,Y)e_l,e_k)\pi_j(e_k \wedge e_l)\\
  &= \frac{1}{2}\sum_{k,l} g( R(X,Y)e_l,e_k)\pi_j([e_k, e_l]_{\g})\\
  &= \frac{1}{4}\sum_{k,l} g(R(X,Y)e_l, e_k)\pi_j(\sigma_l \sigma_k)\\
  &= \frac{1}{4}\sum_{\tau \in \mathfrak{S}_3} \sgn(\tau) g(R(X,Y)e_{\tau(1)}, e_{\tau(2)})\pi_j(e_{\tau(3)}),
\end{align*}
where we use the identification $\su(2)\cong \so(3)$ given by $[e_i,e_j]_{\g} \mapsto e_i\wedge e_j$.
In particular, we have
\begin{equation}
  R_j(e_1, e_2) = -\frac{1}{4}\scal \pi_j(e_3) + \frac{1}{2} \pi_j(\Ric(e_3)), \label{eq:loc_expression_of_curvature_on_Sj}
\end{equation}
and cyclic permutations for $R_j(e_2, e_3), R_j(e_3, e_1)$.
Then we obtain another expression of the curvature action $q(R)$ in $\Gamma(\End(S_j))$.
\begin{prop}
  \label{prop:expression_of_q(R)}
  The curvature action $q(R)$ can be expressed as
  \begin{align*}
    q(R) &= \frac{\scal}{8}(2j+3)(2j+1) + \frac{1}{4}\sum_{i} \pi_j(e_i)\pi_j(\Ric(e_i)).
  \end{align*}
\end{prop}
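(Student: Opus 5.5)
The plan is to start from the definition of $q(R)$ in Proposition \ref{prop:Bochner-Weitzenbock} and substitute the local expression \eqref{eq:loc_expression_of_curvature_on_Sj} for $R_j$. Fix a positively oriented local orthonormal frame $\{e_1,e_2,e_3\}$. Then $[e_k,e_l]_{\g} = 2\epsilon_{klm} e_m$, so
\[
q(R) = \frac{1}{8}\sum_{k,l}\pi_j([e_k,e_l]_{\g})R_j(e_k,e_l) = \frac{1}{2}\sum_{\text{cyclic }(k,l,m)} \pi_j(e_m) R_j(e_k,e_l).
\]
Here the antisymmetry in $(k,l)$ doubles each cyclic term and the factor $2$ from the bracket doubles it again, giving $\tfrac{1}{8}\cdot 2\cdot 2 = \tfrac12$.

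Next I insert \eqref{eq:loc_expression_of_curvature_on_Sj} and its cyclic permutations, $R_j(e_k,e_l) = -\tfrac14 \scal\, \pi_j(e_m) + \tfrac12 \pi_j(\Ric(e_m))$. This gives
\[
q(R) = -\frac{\scal}{8}\sum_m \pi_j(e_m)\pi_j(e_m) + \frac14 \sum_m \pi_j(e_m)\pi_j(\Ric(e_m)).
\]
The Casimir identity $\sum_m \pi_j(e_m)\pi_j(e_m) = \pi_j(c) = -(2j+1)(2j+3)$, stated before \eqref{eq:embedding_of_W_j}, turns the first term into $\frac{\scal}{8}(2j+1)(2j+3)$. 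That is the claimed formula.

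The main obstacle is sign and normalization bookkeeping. I need to confirm that the cyclic-sum reduction matches the convention $\frac12[X,Y]_{\g} = \ast(X\wedge Y)$. I also need \eqref{eq:loc_expression_of_curvature_on_Sj} to be correctly derived from the preceding display, which expresses $R_j$ through $g(R(X,Y)e_{\tau(1)},e_{\tau(2)})$. As a check, I would rederive \eqref{eq:loc_expression_of_curvature_on_Sj} in dimension three. There the curvature is determined by Ricci via $R_{1212} = \tfrac{\scal}{2} - \Ric_{33}$, and similarly $R_{12\,i3}$ is given by the off-diagonal Ricci entries. Plugging these into $\frac14\sum_\tau \sgn(\tau)\, g(R(e_1,e_2)e_{\tau(1)},e_{\tau(2)})\pi_j(e_{\tau(3)})$ should produce $-\frac14\scal\,\pi_j(e_3) + \frac12\pi_j(\Ric(e_3))$.

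As a sanity check, on the round sphere with $\Ric = 2$ and $\scal = 6$ one gets $q(R) = \tfrac34(2j+1)(2j+3) - \tfrac12(2j+1)(2j+3) = \tfrac14(2j+1)(2j+3) > 0$. For $j=0$ this is $3/4$, which matches $\scal/8$ under the paper's normalization. Finally, the sum $\sum_i \pi_j(e_i)\pi_j(\Ric(e_i))$ is independent of the frame, so the local identity holds globally.
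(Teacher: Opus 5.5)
Your proposal is correct and follows the route the paper intends. The paper states this proposition right after \eqref{eq:loc_expression_of_curvature_on_Sj} without a separate proof, and the implied derivation is yours: substitute that local expression and its cyclic permutations into $q(R)=\frac{1}{8}\sum_{k,l}\pi_j([e_k,e_l]_{\g})R_j(e_k,e_l)=\frac12\sum_{\mathrm{cyclic}}\pi_j(e_m)R_j(e_k,e_l)$, then apply the Casimir identity $\sum_i\pi_j(e_i)\pi_j(e_i)=-(2j+1)(2j+3)$. Your normalization bookkeeping is correct, and your sanity check agrees with the Einstein value $q(R)=\frac{\scal}{24}(2j+1)(2j+3)$ that the paper uses later in the proof of Theorem \ref{theo:eigenvalue_estimate}.
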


Next we shall study twisted Weitzenb\"ock formulas.
From the general theory of Clifford homomorphisms \cite{HomTD} or a direct calculation in \cite{Hom3dim}, we know the following lemma.
\begin{lemm}
  \label{lemm:alg_twisted_Weitzenbock}
  The following two identities for linear maps $W_j$ to $W_{j \pm 1}$ hold:
  \begin{enumerate}
    \item \[\pi_{j+1}(e_k)\pi_j^+(e_l) - \pi_j^+(e_k)\pi_j(e_l) = \frac{2j+3}{2}\pi_j^+([e_k, e_l]_{\g}),\]
    \item \[\pi_{j}^-(e_k)\pi_{j}(e_l) - \pi_{j-1}(e_k)\pi_{j}^-(e_l) = \frac{2j+1}{2}\pi_{j}^-([e_k, e_l]_{\g}).\]
  \end{enumerate}
\end{lemm}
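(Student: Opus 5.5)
The plan is to derive the first identity from the $\SU(2)$-equivariance of $\pi^+_j$ together with a Casimir computation, and then obtain the second identity by taking adjoints.

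\textbf{Step 1 (equivariance).} The map $W_j \otimes \C^3 \ni \varphi \otimes X \mapsto \pi^+_j(X)\varphi \in W_{j+1}$ is $\SU(2)$-equivariant. Under $\R^3 \cong \su(2)$, the vector representation is the adjoint one, so $A \in \su(2)$ acts on $X$ by $[A,X]_{\g}$. Differentiating equivariance gives, for all $A, X$,
\[\pi_{j+1}(A)\pi^+_j(X) = \pi^+_j([A,X]_{\g}) + \pi^+_j(X)\pi_j(A).\]
With $A = e_k$ and $X = e_l$ this reads $\pi_{j+1}(e_k)\pi^+_j(e_l) - \pi^+_j(e_l)\pi_j(e_k) = \pi^+_j([e_k,e_l]_{\g})$. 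So the first identity is equivalent to
\[\pi^+_j(e_l)\pi_j(e_k) - \pi^+_j(e_k)\pi_j(e_l) = \tfrac{2j+1}{2}\pi^+_j([e_k,e_l]_{\g}).\]

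\textbf{Step 2 (Schur for the antisymmetric part).} The assignment $e_k \wedge e_l \mapsto \pi^+_j(e_k)\pi_j(e_l) - \pi^+_j(e_l)\pi_j(e_k)$ defines an equivariant map $\Lambda^2\C^3 \otimes W_j \to W_{j+1}$. Since $\Lambda^2\C^3 \cong \C^3$ via $\ast$, and by \eqref{eq:g-bracket} $\ast(e_k\wedge e_l)=\tfrac12[e_k,e_l]_{\g}$, multiplicity-freeness of $W_j\otimes\C^3$ gives a constant $c$ with
\[\pi^+_j(e_k)\pi_j(e_l) - \pi^+_j(e_l)\pi_j(e_k) = c\,\pi^+_j([e_k,e_l]_{\g}).\]
What remains is to show $c = -\tfrac{2j+1}{2}$.

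\textbf{Step 3 (fixing the constant; the main obstacle).} I would apply the Step 1 relation twice with $A=e_k$ and sum over $k$. The Casimir values are $\pi_{j+1}(c) = -(2j+3)(2j+5)$ and $\pi_j(c) = -(2j+1)(2j+3)$. From $[e_1,e_2]_{\g}=2e_3$ (and cyclic) one gets $\sum_k \mathrm{ad}(e_k)^2 = -8$ on $\su(2)$. This yields
\[-(2j+3)(2j+5)\pi^+_j(X) = -(2j+1)(2j+3)\pi^+_j(X) + 2\sum_k \pi^+_j([e_k,X]_{\g})\pi_j(e_k) - 8\pi^+_j(X),\]
hence
\[\sum_k \pi^+_j([e_k,X]_{\g})\pi_j(e_k) = -2(j+1)\,\pi^+_j(X).\]
On the other side, contract the Step 2 identity against the structure constants. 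Take $X=e_3$, write the left side out using $[e_1,e_3]_{\g}=-2e_2$ and $[e_2,e_3]_{\g}=2e_1$, and express it through the antisymmetric combinations from Step 2. The result should be a multiple of $c\,\pi^+_j(e_3)$ plus a term in $\pi^+_j(e_3)\pi_j(e_3)$. I expect this to need the further fact $\sum_k \pi^+_j(e_k)\pi_j(e_k)=0$ (Schur, as in Proposition \ref{prop:space of the higher Killing spinor}) to eliminate the diagonal term. Comparing the two expressions then determines $c$.

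The delicate point is keeping the signs and normalization factors of the bracket straight. As a consistency check I would verify the answer for $j=0$ on an explicit highest-weight vector.

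\textbf{Step 4 (second identity).} Apply the adjoint to the first identity with $j$ replaced by $j-1$, using $\pi_j(X)^* = -\pi_j(X)$ and $\pi^+_{j-1}(X)^* = -\pi^-_j(X)$. The left side becomes $\pi^-_j(e_l)\pi_j(e_k) - \pi_{j-1}(e_l)\pi^-_j(e_k)$. The right side becomes $-\tfrac{2j+1}{2}\pi^-_j([e_k,e_l]_{\g}) = \tfrac{2j+1}{2}\pi^-_j([e_l,e_k]_{\g})$. Swapping $k$ and $l$ gives exactly the second identity.
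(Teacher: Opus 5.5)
Your overall strategy (equivariance, Schur's lemma, Casimir eigenvalues) is sound, and Steps 1, 2 and 4 are correct. However, Step 3 as written contains an arithmetic error, and carrying it through would give the wrong constant. From your own correct displayed equation,
\[
2\sum_k \pi^+_j([e_k,X]_{\g})\pi_j(e_k)=\bigl(-(2j+3)(2j+5)+(2j+1)(2j+3)+8\bigr)\pi^+_j(X)=-(8j+4)\,\pi^+_j(X),
\]
so the correct value is $\sum_k \pi^+_j([e_k,X]_{\g})\pi_j(e_k)=-2(2j+1)\,\pi^+_j(X)$, not $-2(j+1)\,\pi^+_j(X)$.

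On the other side, for $X=e_3$ the $k=3$ term vanishes, and the remaining two terms are exactly the antisymmetric combination from Step 2:
\[
\sum_k \pi^+_j([e_k,e_3]_{\g})\pi_j(e_k)=2\bigl(\pi^+_j(e_1)\pi_j(e_2)-\pi^+_j(e_2)\pi_j(e_1)\bigr)=2c\,\pi^+_j([e_1,e_2]_{\g})=4c\,\pi^+_j(e_3).
\]
There is no diagonal term, so you do not need $\sum_k\pi^+_j(e_k)\pi_j(e_k)=0$. With the corrected value, $4c=-2(2j+1)$ gives $c=-\tfrac{2j+1}{2}$, as required. With your value you would get $c=-\tfrac{j+1}{2}$, which contradicts the lemma for every $j\geq 1$. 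Your proposed sanity check at $j=0$ would not have caught this, because $-2(j+1)$ and $-2(2j+1)$ coincide at $j=0$; a check at $j=1$ would.

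With this fix, your argument is a complete proof, and it takes a different route from the paper. The paper gives no derivation of the first identity. It cites the general theory of Clifford homomorphisms in \cite{HomTD}, which produces such twisted Weitzenb\"ock identities uniformly for arbitrary representations, or a direct calculation in \cite{Hom3dim}. You instead derive it from $\SU(2)$-equivariance and Casimir eigenvalues alone. That makes your proof self-contained and shows that the identity does not depend on the normalization of $\pi^+_j$, since it is linear in $\pi^+_j$. It also shows that $\tfrac{2j+3}{2}$ is one eighth of the difference between the Casimir eigenvalues on $W_{j+1}$ and $W_j$. Your Step 4 is exactly the paper's remark that the second identity is the adjoint of the first.
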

Note that the second equation is obtained by taking the adjoint of the first equation.
From these relations, we have the following proposition by the same way to the proof of Proposition \ref{prop:Bochner-Weitzenbock}.

\begin{prop}
  We define two curvature actions $q^+(R), q^-(R) \in \Gamma(\Hom(S_j, S_{j \pm 1}))$ by
  \[q^{\pm}(R) = \frac{1}{4}\sum_{k,l} \pi_{j}^{\pm}([e_k, e_l]_{\g})R_j(e_k, e_l).\]
  Then, following two equations hold:
 \begin{enumerate}
  \item
  \begin{equation}  
    q^+(R) = D_{j+1}T_j^+-\dfrac{2j+1}{2j+3}T_j^+D_j, \label{eq:twisted_weitzenbock_formula+}
  \end{equation}
  \item 
  \begin{equation}
    q^-(R) = T_j^-D_j-\dfrac{2j-1}{2j+1}D_{j-1}T_j^-. \label{eq:twisted_weitzenbock_formula-}
  \end{equation}
 \end{enumerate}
\end{prop}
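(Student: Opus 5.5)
We prove the two twisted Weitzenb\"ock formulas exactly as in the proof of Proposition \ref{prop:Bochner-Weitzenbock}, replacing Lemma \ref{lemma:algebraic_weitzenbock_formula} by Lemma \ref{lemm:alg_twisted_Weitzenbock}. Fix $x \in M$ and a local orthonormal frame $\{e_i\}$ with $\nabla e_i = 0$ at $x$. The Clifford homomorphisms $\pi_j, \pi_j^\pm$ are parallel bundle maps, since they are induced by $\Spin(3)$-equivariant maps. Hence at $x$ we can write
\begin{align*}
  D_{j+1}T^+_j &= \frac{1}{2j+3}\sum_{k,l}\pi_{j+1}(e_k)\pi^+_j(e_l)\nabla_{e_k}\nabla_{e_l},\\
  T^+_j D_j &= \frac{1}{2j+1}\sum_{k,l}\pi^+_j(e_k)\pi_j(e_l)\nabla_{e_k}\nabla_{e_l}.
\end{align*}
The normalizing factors $\frac{1}{2j+3}$ and $\frac{1}{2j+1}$ come from the definitions of $D_{j+1}$ and $D_j$.

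Multiplying the second line by $\frac{2j+1}{2j+3}$ makes the two prefactors equal. Then
\[D_{j+1}T^+_j - \frac{2j+1}{2j+3}T^+_jD_j = \frac{1}{2j+3}\sum_{k,l}\left(\pi_{j+1}(e_k)\pi^+_j(e_l) - \pi^+_j(e_k)\pi_j(e_l)\right)\nabla_{e_k}\nabla_{e_l}.\]
By the first identity of Lemma \ref{lemm:alg_twisted_Weitzenbock}, the bracket equals $\frac{2j+3}{2}\pi^+_j([e_k,e_l]_{\g})$, so the right-hand side becomes
\[\frac12\sum_{k,l}\pi^+_j([e_k,e_l]_{\g})\nabla_{e_k}\nabla_{e_l}.\]
The coefficient $\pi^+_j([e_k,e_l]_{\g})$ is antisymmetric in $(k,l)$, so we antisymmetrize:
\[\frac12\sum_{k,l}\pi^+_j([e_k,e_l]_{\g})\nabla_{e_k}\nabla_{e_l} = \frac14\sum_{k,l}\pi^+_j([e_k,e_l]_{\g})\left(\nabla_{e_k}\nabla_{e_l}-\nabla_{e_l}\nabla_{e_k}\right).\]
At $x$ the frame satisfies $[e_k,e_l]=0$, so $\nabla_{e_k}\nabla_{e_l}-\nabla_{e_l}\nabla_{e_k} = R_j(e_k,e_l)$ there. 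The right-hand side is therefore $q^+(R)$, which gives \eqref{eq:twisted_weitzenbock_formula+}.

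For \eqref{eq:twisted_weitzenbock_formula-} we compute the same way:
\[T^-_jD_j - \frac{2j-1}{2j+1}D_{j-1}T^-_j = \frac{1}{2j+1}\sum_{k,l}\left(\pi^-_j(e_k)\pi_j(e_l) - \pi_{j-1}(e_k)\pi^-_j(e_l)\right)\nabla_{e_k}\nabla_{e_l}.\]
Here $D_{j-1}$ carries the factor $\frac{1}{2j-1}$, which cancels against $\frac{2j-1}{2j+1}$ to leave $\frac{1}{2j+1}$. The second identity of Lemma \ref{lemm:alg_twisted_Weitzenbock} turns the bracket into $\frac{2j+1}{2}\pi^-_j([e_k,e_l]_{\g})$, and the antisymmetrization argument again yields $q^-(R)$. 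For $j=0$ both sides vanish, because $T^-_0=0$ and $\pi^-_0=0$.

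There is no real obstacle here. The only point needing care is the bookkeeping of the normalization constants. In particular, one must check that the factor in front of each composition matches the factor $\frac{2j+3}{2}$ or $\frac{2j+1}{2}$ in Lemma \ref{lemm:alg_twisted_Weitzenbock}, so that the final coefficient is exactly $\frac14$, as in the definition of $q^\pm(R)$.
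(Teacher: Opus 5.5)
Your proposal is correct and follows essentially the same route as the paper: a frame with $\nabla e_i = 0$ at $x$, the first identity of Lemma~\ref{lemm:alg_twisted_Weitzenbock}, and antisymmetrization in $(k,l)$ to reach $q^+(R)$. The only difference is that you carry out the $q^-(R)$ case explicitly, including the cancellation of the $\frac{1}{2j-1}$ factor and the $j=0$ case, whereas the paper only says it follows in the same way.
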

\begin{proof}
  We only prove the first equation. The second equation can be proved in the same way.
  Fix a point $x \in M$ and take a local orthonormal frame $\{e_i\}$ such that $\nabla e_i = 0$ at $x$.
  Then, at the point $x$, we have
    \begin{gather*}
      D_{j+1}T_j^+ = \frac{1}{2j+3} \sum_{k,l} \pi_{j+1}(e_k) \pi^+_j(e_l) \nabla_{e_k} \nabla_{e_l},\\
      T_j^+D_j = \frac{1}{2j+1} \sum_{k,l} \pi^+_j(e_k) \pi_j(e_l) \nabla_{e_k} \nabla_{e_l}.
    \end{gather*}
  By using the first equation in Lemma \ref{lemm:alg_twisted_Weitzenbock}, we have
  \begin{align*}
    D_{j+1}T_j^+ - \dfrac{2j+1}{2j+3}T_j^+D_j &= \sum_{k,l} \left(\frac{1}{2j+3}\pi_{j+1}(e_k) \pi^+_j(e_l) - \frac{1}{2j+3}\pi^+_j(e_k) \pi_j(e_l)\right) \nabla_{e_k} \nabla_{e_l}\\
    &= \frac{1}{4}\sum_{k,l} \pi_j^+([e_k, e_l]_{\g}) (\nabla_{e_k} \nabla_{e_l} - \nabla_{e_l} \nabla_{e_k}) = q^+(R).
  \end{align*}
\end{proof}

\subsection{The basics on higher spin Killing spinors on 3-dimensional manifolds}
\label{sec:higher_spin_killing_on_3mfd}
Now we normalize the Killing number $\mu$ such that
\[\nabla_X \varphi = \mu \pi_j(X) \varphi \quad (\forall X \in TM),\]
because of the normalization of Clifford homomorphisms.

\begin{exam}
  A trivial example of higher spin Killing spinors is a higher parallel spinor (i.e. $\mu = 0$) on the 3-dimensional flat torus $T^3$. We can choose a spin structure on $T^3$ as the trivial principal $\SU(2)$-bundle over $T^3$.
  The spin $(j + \frac{1}{2})$ spinor bundle $S_j$ on $T^3$ is trivial and therefore, all constant spinors are higher spin parallel spinors.
  We give non-trivial examples of higher spin Killing spinors with non-zero Killing number on the 3-sphere $\sphere{3}$ in \S \ref{sec:higher_killing_on_S3}, and on the hyperbolic 3-space $\hb{3}$ in \S \ref{sec:higher_killing_on_H3}.
\end{exam}

We restate some properties of higher spin Killing spinors from \S\ref{sec:Higher_Killing_spinors} in the 3-dimensional case.
By Proposition \ref{prop:dim_of_Killingspinors}, the dimension of the space of spin $(j + \frac{1}{2})$ Killing spinors with Killing number $\mu$ is less than or equal to $2j + 2$.
The next proposition follows from Proposition \ref{prop:higher_killing_spinor_eigenvalue} and Proposition \ref{prop:space of the higher Killing spinor}.
\begin{prop}
  The spin $(j + \frac{1}{2})$ Killing spinor $\varphi$ with Killing number $\mu$ satisfies $D_j \varphi = -(2j+3)\mu\varphi$ and $\varphi$ is in $\ker T^+_j \cap \ker T^-_j$.
\end{prop}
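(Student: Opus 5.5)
This follows by redoing the proofs of Proposition \ref{prop:higher_killing_spinor_eigenvalue} and Proposition \ref{prop:space of the higher Killing spinor} in the 3-dimensional normalization, where the Killing equation now reads $\nabla_X\varphi=\mu\pi_j(X)\varphi$.

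For the eigenvalue, I will compute directly from the normalized definition
\[
D_j\varphi=\frac{1}{2j+1}\sum_i\pi_j(e_i)\nabla_{e_i}\varphi=\frac{\mu}{2j+1}\sum_i\pi_j(e_i)\pi_j(e_i)\varphi .
\]
Under the identification $e_i\mapsto\sigma_i$ of Remark \ref{rema:identification_su2_and_R3}, the sum $\sum_i\pi_j(e_i)\pi_j(e_i)$ is $\pi_j(c)$, the action of the Casimir element. This equals $-(2j+1)(2j+3)$, as recorded when the embedding \eqref{eq:embedding_of_W_j} was introduced. Therefore $D_j\varphi=-(2j+3)\mu\varphi$.

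An equivalent check: the old and new Killing numbers are related by $\mu_{\mathrm{old}}=\sqrt{(2j+1)(2j+3)}\,\mu$, and the old and new Dirac operators by $D_j^{\mathrm{new}}=\sqrt{(2j+3)/(2j+1)}\,D_j^{\mathrm{old}}$. Proposition \ref{prop:higher_killing_spinor_eigenvalue} then gives the same value.

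For the kernels, I will write
\[
T^\pm_j\varphi=\mu\sum_i\pi^\pm_j(e_i)\pi_j(e_i)\varphi .
\]
The operator $\sum_i\pi_j^\pm(e_i)\pi_j(e_i)$ is $\SU(2)$-equivariant from $W_j$ to $W_{j\pm1}$, since the sum over an orthonormal frame is frame-independent. Because $W_j$ and $W_{j\pm1}$ are irreducible and non-isomorphic (their dimensions differ), Schur's lemma forces this map to vanish. The $-$ case needs no separate argument when $j=0$, since $T^-_0=0$ by convention.

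Equivalently, the kernel statement is the $n=3$ case of Proposition \ref{prop:space of the higher Killing spinor}, because $\pi_j^\pm$ and $T_j^\pm$ are just constant rescalings of $p_j^\pm$ and of the original twistor operators. No step here is a real obstacle. The only point that needs care is tracking the normalization constants, so that the Casimir value yields exactly $-(2j+3)\mu$.
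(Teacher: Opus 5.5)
Your proposal is correct and matches the paper, which deduces this statement directly from Proposition \ref{prop:higher_killing_spinor_eigenvalue} and Proposition \ref{prop:space of the higher Killing spinor} in the $\pi_j$ normalization. Your Casimir computation $\sum_i\pi_j(e_i)^2=-(2j+1)(2j+3)$ and your Schur's-lemma argument are exactly those two proofs rewritten in that normalization, and your rescalings $\mu_{\mathrm{old}}=\sqrt{(2j+1)(2j+3)}\,\mu$ and $D_j^{\mathrm{new}}=\sqrt{(2j+3)/(2j+1)}\,D_j^{\mathrm{old}}$ are correct.
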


From now on, we study a manifold admitting a spin $(j + \frac{1}{2})$ Killing spinor.
In the usual spinor case ($j = 0$), it is well-known that the existence of a Killing spinor implies that the manifold is an Einstein manifold. We shall show the same result for higher spin Killing spinors.
First, we derive an integrability condition for higher spin Killing spinors.
\begin{prop}
  Let $M$ be a 3-dimensional spin manifold admitting a spin $(j + \frac{1}{2})$ Killing spinor $\varphi$ with Killing number $\mu$. Then, $\varphi$ satisfies
  \[\pi_j\left(\Ric(X) - \frac{1}{2}(\scal - 8\mu^2)X\right)\varphi = 0 \quad (\forall X \in TM)\]
\end{prop}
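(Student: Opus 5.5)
The idea is to compute the curvature of $S_j$ on $\varphi$ in two ways, exactly as in the classical integrability argument for Killing spinors. First, differentiate the Killing equation $\nabla_X\varphi=\mu\pi_j(X)\varphi$ a second time. Use a frame with $\nabla e_i=0$ at the point, and use that $\pi_j$ is parallel. This gives
\[
R_j(e_k,e_l)\varphi=\mu^2\bigl(\pi_j(e_l)\pi_j(e_k)-\pi_j(e_k)\pi_j(e_l)\bigr)\varphi=-\mu^2\pi_j([e_k,e_l]_{\g})\varphi .
\]
Here the bracket is the bracket of endomorphisms, which by representation theory equals $\pi_j$ of the $\su(2)$-bracket. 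For a positively oriented frame we have $[e_k,e_l]_{\g}=2e_m$ for cyclic $(k,l,m)$. Hence $R_j(e_1,e_2)\varphi=-2\mu^2\pi_j(e_3)\varphi$, and cyclically for the other two pairs.

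Second, compare this with the local expression \eqref{eq:loc_expression_of_curvature_on_Sj}:
\[
R_j(e_1,e_2)=-\tfrac14\scal\,\pi_j(e_3)+\tfrac12\pi_j(\Ric(e_3)).
\]
Equating the two expressions gives
\[
\tfrac12\pi_j(\Ric(e_3))\varphi-\tfrac14\scal\,\pi_j(e_3)\varphi+2\mu^2\pi_j(e_3)\varphi=0.
\]
Multiplying by $2$, this reads
\[
\pi_j\Bigl(\Ric(e_3)-\tfrac12(\scal-8\mu^2)e_3\Bigr)\varphi=0 .
\]
The cyclic permutations give the same identity for $e_1$ and $e_2$. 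Since the expression is linear in $X$ (and $\pi_j$ is linear), it holds for all $X\in TM$.

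The main obstacle is the sign and normalization bookkeeping, in two places. The first is the convention $R_j(X,Y)=[\nabla_X,\nabla_Y]-\nabla_{[X,Y]}$ versus the opposite convention. This must match the convention used to derive \eqref{eq:loc_expression_of_curvature_on_Sj}; the sign of $R_j$ in the Weitzenböck proof ($\nabla_{e_k}\nabla_{e_l}-\nabla_{e_l}\nabla_{e_k}=R_j(e_k,e_l)$) fixes it. The second is the factor $2$ in $[e_1,e_2]_{\g}=2e_3$ under $\R^3\cong\su(2)$.

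As a sanity check I would test the constant in the round-sphere case. There $\Ric=\tfrac{\scal}{3}$, and the identity forces $\tfrac{\scal}{3}=\tfrac12(\scal-8\mu^2)$, i.e. $\scal=24\mu^2$. This matches the constant announced in Theorem~\ref{theo:3dim_Killing_and_Einstein}, which confirms the sign of $8\mu^2$.
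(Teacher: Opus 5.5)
Your proposal is correct and follows the paper's proof: you differentiate the Killing equation in a frame with $(\nabla e_i)_x=0$ to get $R_j(e_1,e_2)\varphi=-2\mu^2\pi_j(e_3)\varphi$, compare this with \eqref{eq:loc_expression_of_curvature_on_Sj}, and then permute cyclically. Your constants and signs agree with the paper's.
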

\begin{proof}
  Fix any point $x \in M$ and take a local orthonormal frame $\{e_1, e_2, e_3\}$ around $x$ such that $(\nabla e_i)_x = 0$. Then we have
  \[R_j(e_1, e_2)\varphi = \mu^2(\pi_j(e_2)\pi_j(e_1) - \pi_j(e_1)\pi_j(e_2))\varphi = -\mu^2 \pi_j([e_1, e_2]_{\g})\varphi = -2\mu^2 \pi_j(e_3)\varphi.\]
  Hence, by equation \eqref{eq:loc_expression_of_curvature_on_Sj}, we obtain
    \[\pi_j\left(\Ric(e_3) - \frac{1}{2}(\scal - 8\mu^2)e_3\right)\varphi = 0.\]
  By a similar argument, the same equation holds for $e_1$ and $e_2$.
\end{proof}

From this proposition, we obtain the curvature action on the higher spin Killing spinors.
\begin{corr}
  \label{corr:Killingnum_on_3dim}
  For spin $(j+\frac{1}{2})$ Killing spinor $\varphi$, we have
  \[q(R)\varphi = \mu^2(2j+3)(2j+1)\varphi.\]
  In particular, $\mu$ must be a real number or purely imaginary number.
  We also have $q^{\pm}(R)\varphi = 0$.
\end{corr}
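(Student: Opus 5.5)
Three things must be shown: the value of $q(R)\varphi$, the reality constraint on $\mu$, and $q^{\pm}(R)\varphi=0$. I plan to compute $q(R)\varphi$ directly from its definition rather than through Proposition~\ref{prop:expression_of_q(R)}. The integrability computation in the proof of the preceding proposition already gives, at a point with $(\nabla e_i)_x=0$,
\[
R_j(e_k,e_l)\varphi=\mu^2\bigl(\pi_j(e_l)\pi_j(e_k)-\pi_j(e_k)\pi_j(e_l)\bigr)\varphi=-\mu^2\pi_j([e_k,e_l]_{\g})\varphi .
\]
This holds for all $k,l$, since the computation did not depend on the particular pair $(1,2)$. Substituting it into $q(R)=\frac18\sum_{k,l}\pi_j([e_k,e_l]_{\g})R_j(e_k,e_l)$ gives
\[
q(R)\varphi=-\tfrac{\mu^2}{8}\sum_{k,l}\pi_j([e_k,e_l]_{\g})^2\varphi .
\]
Only the six terms with $k\neq l$ survive, and $[e_k,e_l]_{\g}=\pm2e_m$ there. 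The sum is therefore $4\cdot 2\sum_m\pi_j(e_m)^2=8\,\pi_j(c)=-8(2j+1)(2j+3)$. Hence $q(R)\varphi=\mu^2(2j+1)(2j+3)\varphi$.

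For the restriction on $\mu$, recall from Proposition~\ref{prop:Bochner-Weitzenbock} that $q(R)$ is a symmetric endomorphism. Since $\varphi\neq0$, we can pick a point where $\varphi_x\neq0$. Pairing with $\varphi$ there gives $\mu^2(2j+1)(2j+3)|\varphi|^2=\langle q(R)\varphi,\varphi\rangle\in\R$, so $\mu^2\in\R$. Thus $\mu$ is real or purely imaginary.

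For $q^{\pm}(R)\varphi=0$, the same substitution yields
\[
q^{\pm}(R)\varphi=-\tfrac{\mu^2}{4}\sum_{k,l}\pi^{\pm}_j([e_k,e_l]_{\g})\pi_j([e_k,e_l]_{\g})\varphi=-2\mu^2\sum_m\pi^{\pm}_j(e_m)\pi_j(e_m)\varphi .
\]
The operator $\sum_m\pi^{\pm}_j(e_m)\pi_j(e_m)$ is $\SU(2)$-equivariant from $W_j$ to $W_{j\pm1}$, so it vanishes by Schur's lemma, exactly as in the proof of Proposition~\ref{prop:space of the higher Killing spinor}.

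As a cross-check on $q^{\pm}(R)\varphi=0$, one can use \eqref{eq:twisted_weitzenbock_formula+} and \eqref{eq:twisted_weitzenbock_formula-}: $T^{\pm}_j\varphi=0$ and $D_j\varphi$ is a multiple of $\varphi$, so both right-hand sides vanish on $\varphi$. I expect the main obstacle to be the bookkeeping of factors and signs in the Casimir sum. The identity $\pi_j([e_k,e_l]_{\g})=\pi_j(e_k)\pi_j(e_l)-\pi_j(e_l)\pi_j(e_k)$ used in the curvature identity is simply that $\pi_j$ is a Lie algebra representation.
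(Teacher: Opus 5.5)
Your proof is correct. The computation of $q(R)\varphi$ follows a different route from the paper's.

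The paper does not substitute into the definition of $q(R)$. Instead, it contracts the Ricci-form integrability condition $\pi_j\bigl(\Ric(X)-\frac12(\scal-8\mu^2)X\bigr)\varphi=0$ with $\pi_j(e_i)$ and sums over $i$. It then uses Proposition~\ref{prop:expression_of_q(R)}, which writes $q(R)$ in terms of $\scal$ and $\Ric$, together with the Casimir value, to read off $4q(R)\varphi=4\mu^2(2j+3)(2j+1)\varphi$.

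For $q^{\pm}(R)\varphi=0$, the paper uses exactly what you call a cross-check: the twisted Weitzenb\"ock formulas combined with $T^{\pm}_j\varphi=0$ and $D_j\varphi=-(2j+3)\mu\varphi$. The argument that $\mu^2$ is real is the same in both.

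You instead plug $R_j(e_k,e_l)\varphi=-\mu^2\pi_j([e_k,e_l]_{\g})\varphi$ directly into the definitions of $q(R)$ and $q^{\pm}(R)$; your constants check out. This buys two things:
\begin{enumerate}
\item You need neither the splitting \eqref{eq:loc_expression_of_curvature_on_Sj} of $R_j$ into Ricci and scalar parts nor Proposition~\ref{prop:expression_of_q(R)}, which the paper states without proof. The eigenvalue of $q(R)$ on $\varphi$ then appears as a pure Casimir computation.
\item Your Schur-lemma argument for $q^{\pm}(R)\varphi=0$ is purely algebraic and does not depend on the twisted Weitzenb\"ock formulas.
\end{enumerate}

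The paper's route is economical in context, because the Ricci-form integrability condition is needed anyway for Theorem~\ref{theo:3dim_Killing_and_Einstein}. It also ties the eigenvalue of $q(R)$ directly to $\Ric$ and $\scal$.
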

\begin{proof}
By Proposition \ref{prop:expression_of_q(R)}, we have
\begin{align*}
  0 &= \sum_i \pi_j(e_i)\pi_j\left(\Ric(e_i) - \frac{1}{2}(\scal - 8\mu^2)e_i\right)\varphi \\
  &= 4q(R)\varphi - \frac{1}{2}(2j+3)(2j+1)\scal\varphi + \frac{1}{2}(\scal - 8\mu^2)(2j+3)(2j+1)\varphi \\
  &= 4q(R)\varphi - 4\mu^2(2j+3)(2j+1)\varphi.
\end{align*}
Since $q(R)$ is a symmetric endomorphism, eigenvalues of $q(R)$ are real numbers. Thus, $\mu^2$ must be a real number.
$q^{\pm}(R)\varphi = 0$ follows from the fact that $\varphi \in \ker T^+_j \cap \ker T^-_j$ and \eqref{eq:twisted_weitzenbock_formula+}, \eqref{eq:twisted_weitzenbock_formula-}.
\end{proof}
Now we prove that a 3-dimensional manifold admitting a higher spin Killing spinor is an Einstein manifold.
\begin{theoremalpha}
\label{theo:3dim_Killing_and_Einstein}
  Let $(M,g)$ be a 3-dimensional spin manifold. If $M$ admits a spin $(j + \frac{1}{2})$ Killing spinor $\varphi$ with Killing number $\mu$, then $(M,g)$ is an Einstein manifold, and hence, $M$ is of constant curvature with $\scal = 24\mu^2$.
\end{theoremalpha}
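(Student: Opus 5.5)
The proof starts from the integrability condition proved just above: for every $X \in TM$,
\[\pi_j\bigl(A X\bigr)\varphi = 0, \qquad A := \Ric - \tfrac{1}{2}(\scal - 8\mu^2)\Id.\]
By Corollary \ref{corr:Killingnum_on_3dim}, $\mu^2$ is real. Hence $A$ is a \emph{real} symmetric endomorphism of $TM$, and $AX$ is a real tangent vector whenever $X$ is. The plan is to show that $A = 0$ pointwise. The Einstein condition and the value of the scalar curvature then follow from a trace computation.

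The first step is to see that $\varphi$ vanishes nowhere. A Killing spinor is parallel for the modified connection $\widetilde{\nabla}_X = \nabla_X - \mu \pi_j(X)$, as in Proposition \ref{prop:dim_of_Killingspinors}. A parallel section of a connection on a connected manifold that vanishes at one point vanishes identically. Since $\varphi$ is non-trivial, $\varphi_x \neq 0$ for all $x$; I assume $M$ connected, as implicitly in the statement.

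The second step, which is the key algebraic point, is that $\pi_j(Y)$ is invertible on $W_j$ for every real nonzero $Y \in \R^3 \cong \su(2)$. First, $\pi_j(Y)$ is $\Ad$-conjugate to a positive multiple $|Y|\,\pi_j(\sigma_1)$ of the Cartan element, because $\SU(2)$ acts transitively on spheres in $\su(2)$. Second, $\pi_j(\sigma_1)$ acts on the weight vectors of $W_j$ by $\sqrt{-1}\,k$ with $k \in \{2j+1, 2j-1, \ldots, -(2j+1)\}$. These weights are all odd integers, because $W_j$ has highest weight $2j+1$ and hence even dimension $2j+2$. So $0$ is never an eigenvalue. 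This is exactly where half-integral spin is used, and it is the step I regard as the heart of the argument. The only care needed is checking the normalization of $\sigma_1$, which affects only the nonzero scale and not the conclusion.

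Combining the two steps: at each point $x$, we have $\pi_j(AX)\varphi_x = 0$ with $\varphi_x \neq 0$ and $AX$ real. Hence $AX = 0$ for all $X$, so $\Ric = \tfrac12(\scal - 8\mu^2)\, g$ and $(M,g)$ is Einstein. Taking the trace gives $\scal = \tfrac32(\scal - 8\mu^2)$, that is, $\scal = 24\mu^2$, which is constant. Thus $\Ric = 8\mu^2 g$. In dimension three the Weyl tensor vanishes, so the curvature tensor is determined by the Ricci tensor. An Einstein $3$-manifold is therefore of constant sectional curvature $\scal/6 = 4\mu^2$, which completes the proof.
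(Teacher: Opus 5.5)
Your proof is correct and follows essentially the same route as the paper: $\varphi$ vanishes nowhere because it is $\widetilde{\nabla}$-parallel, and $\pi_j(Y)$ is invertible for real nonzero $Y$ because $W_j$ has no zero weight (the paper phrases this as $\det\pi_j(Y) = c_j\|Y\|^{2j+2}$ with $c_j \neq 0$). Your explicit remark that $\mu^2$ is real by Corollary \ref{corr:Killingnum_on_3dim}, so that $AX$ is a real vector, makes precise a point the paper's proof uses only implicitly; this matters because $\pi_j(Y)$ can be nilpotent for nonzero complex $Y$.
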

\begin{proof}
  A Killing spinor $\varphi$ is parallel with respect to $\widetilde{\nabla}_X = \nabla_X - \mu\pi_j(X)$ so that $\varphi$ has no zeros. The above integrability condition implies that there is a non-trivial solution for each point $x \in M$. Then we have
  \[\det\left(\pi_j\left(\Ric(X) - \frac{1}{2}(\scal - 8\mu^2)X\right)\right) = 0 \quad (\forall X \in T_xM)\]
  for each point $x \in M$.
  First, we know there is a non-zero constant $c_j$ such that
  \[\det(\pi_j(e_1)) = c_j \neq 0\]
  because there is no weight vector with weight zero for the spin $(j + \frac{1}{2})$ representation.
  Next, for any non-zero vector $Y \in T_xM$, we can take $g \in \SU(2)$ such that $\|Y\|ge_1g^{-1} = Y$. Since the dimension of the spin $(j + \frac{1}{2})$ representation space $W_j$ is $2j + 2$, we have
  \[\det(\pi_j(Y)) = \|Y\|^{2j+2}c_j.\]
  Put $Y = \Ric(X) - \frac{1}{2}(\scal - 8\mu^2)X$ and we have
  \[0 = \det\left(\pi_j\left(\Ric(X) - \frac{1}{2}(\scal - 8\mu^2)X\right)\right) = c_j\left\|\Ric(X) - \frac{1}{2}(\scal - 8\mu^2)X\right\|^{2j+2}.\]
  Then we have
  \[\Ric(X) - \frac{1}{2}(\scal - 8\mu^2)X = 0\]
  for all $X \in TM$. This means that $(M,g)$ is an Einstein manifold. Moreover, taking the trace of both sides of this equation, the scalar curvature of $M$ satisfies
  \[\scal = \frac{3}{2}(\scal - 8\mu^2),\]
  that is, $\scal = 24\mu^2$.
\end{proof}

\subsection{Cone construction}
\label{sec:cone_construction}
In the spin $\tfrac{1}{2}$ case, C. B\"ar proved that there is a one-to-one correspondence between real Killing spinors on a Riemannian spin manifold and parallel spinors on the cone over the manifold for the classification of the manifolds admitting real Killing spinors \cite{BarKilling}.
We shall extend this correspondence to higher spin Killing spinors on 3-dimensional manifolds.

Let $(M, g)$ be a 3-dimensional Riemannian spin manifold and $(\cone{M}, \cone{g})$ the cone over $(M,g)$ defined by
\[\cone{M} = M \times \R_+, \quad \cone{g} = r^2 g + dr^2.\]
The cone $\cone{M}$ is a 4-dimensional Riemannian spin manifold. Indeed, if $\theta \colon P_{\Spin} M \to P_{\SO} M$ is a spin structure on $M$, then the spin structure on $\cone{M}$ is given by
\[P_{\Spin} \cone{M} := \pi^\ast(P_{\Spin} M \times_{\Spin(3)} \Spin(4)),\]
\[\cone{\theta} \colon P_{\Spin} \cone{M} \to P_{\SO} \cone{M}, \quad ((x, r), [\tilde{u}_x, g]) \mapsto \left(\frac{1}{r}\theta(\tilde{u}_x), \frac{\partial}{\partial r}\right)\xi(g),\]
\[(x, r) \in \cone{M},\,\, \tilde{u}_x \in (P_{\Spin} M)_x,\,\, g \in \Spin(4),\]
where $\pi \colon \cone{M} \to M$ is the natural projection and $\xi \colon \Spin(4) \to \SO(4)$ is the covering map.

Since $\Spin(4) \cong \SU(2) \times \SU(2)$, the irreducible representations of $\Spin(4)$ are given by the (outer) tensor products of the irreducible representations of $\SU(2)$. We note that the differential of this isomorphism yields the Lie algebra isomorphism $\so(4) \cong \so(3) \oplus \so(3)$, which is given by
\begin{align*}
  \text{first $\so(3)$} &= \operatorname{span}_{\C} \{ e_1 \wedge e_2 + e_3 \wedge e_4,\,\, e_2 \wedge e_3 + e_1 \wedge e_4,\,\, e_3 \wedge e_1 + e_2 \wedge e_4 \},\\
  \text{second $\so(3)$} &= \operatorname{span}_{\C}\{ e_1 \wedge e_2 - e_3 \wedge e_4,\,\, e_2 \wedge e_3 - e_1 \wedge e_4,\,\, e_3 \wedge e_1 - e_2 \wedge e_4 \}.
\end{align*}
Namely, the first $\so(3)$ corresponds to the self-dual 2-forms and the second $\so(3)$ corresponds to the anti-self-dual 2-forms.
We consider the vector bundle on $\cone{M}$ associated to the representation $(\pi_{j,0}, W_{j,0} = W_j \boxtimes \C)$, which is denoted by $\cone{S}_{j,0} = P_{\Spin}\cone{M} \times_{\pi_{j,0}} W_{j,0}$.
The vector bundle $\cone{S}_{j,0}$ is naturally isomorphic to the pullback bundle $\pi^{\ast} S_j$ because the restriction of $W_{j, 0}$ to $\{(g,g) \in \SU(2) \times \SU(2) \mid g \in \SU(2)\} \cong \SU(2)$ is equivalent to $W_j$.

Now we study a relation between the covariant derivatives on $S_j$ and $\cone{S}_{j,0}$.
We note that $\bigwedge^2 T\cone{M}$ acts on $\cone{S}_{j, 0}$ by $\pi_{j, 0} \colon \spin(4) \cong \so(4) \to \End(W_{j,0})$. For a positively oriented local orthonormal frame $(X_1, X_2, X_3)$ on $M$, we take a local orthonormal frame $(\cone{X_1}, \cone{X_2}, \cone{X_3}, \cone{X_4}) = (\frac{1}{r}X_1, \frac{1}{r}X_2, \frac{1}{r}X_3, \partial _r)$ on $\cone{M}$. 
Then, the covariant derivative $\cone{\nabla}$ on $\cone{S}_{j,0}$ is locally expressed as
\begin{align*}
  \cone{\nabla}_{X_1} &= X_1 + \frac{1}{2} \sum_{1 \leq k,l \leq 4} \cone{g}(\cone{\nabla}_{X_1} \cone{X_k}, \cone{X_l}) \pi_{j,0}(\cone{X_k} \wedge \cone{X_l})\\
  &= X_1 + \frac{1}{2r^2} \sum_{1 \leq k,l \leq 3} \cone{g}(\cone{\nabla}_{X_1} X_k, X_l) \pi_{j,0}(\cone{X}_k \wedge \cone{X}_l) + \sum_{1 \leq l \leq 4} \cone{g}(\cone{\nabla}_{X_1} \partial_r, \cone{X}_l) \pi_{j,0}\left(\cone{X}_4 \wedge \cone{X}_l\right)\\
  &= X_1 + \frac{1}{2} \sum_{1 \leq k,l \leq 3} g(\nabla_{X_1} X_k, X_l) \pi_{j,0}(\cone{X}_k \wedge \cone{X}_l) - \pi_{j,0}\left(\cone{X}_1 \wedge \cone{X}_4\right).
\end{align*}
Here, we used formulas of the Levi-Civita connection on the cone (see \cite{O'Neill}):
\[\cone{\nabla}_{X} Y = \nabla_X Y - r g(X,Y) \partial _r, \quad \cone{\nabla}_{X} \partial _r = \frac{1}{r} X.\]
Since the anti-self-dual 2-forms act on $W_{j,0}$ trivially, we have $\pi_{j,0}\left(\cone{X}_1 \wedge \cone{X}_4\right) = \pi_{j,0}\left(\cone{X}_2 \wedge \cone{X}_3\right)$.
Thus, for a spinor $\varphi \in \Gamma(S_j)$ on $M$, we obtain
\begin{align*}
  \cone{\nabla}_{X_1} \pi^{\ast} \varphi &= \pi^{\ast}\left(\nabla_{X_1} \varphi\right) - \pi_{j,0}\left(\cone{X}_1 \wedge \cone{X}_4\right) \pi^{\ast} \varphi\\
  &= \pi^{\ast}\left(\nabla_{X_1} \varphi\right) -  \pi^{\ast}(\pi_{j}\left(X_2 \wedge X_3\right)\varphi)\\
  &= \pi^{\ast}\left(\nabla_{X_1} \varphi - \pi_j(X_2 \wedge X_3)\varphi\right)\\
  &= \pi^{\ast}\left(\nabla_{X_1} \varphi - \frac{1}{2}\pi_j(X_1)\varphi\right).
\end{align*}
Here, we remark that the isomorphism $\su(2) \cong \so(3)$ is given by
\begin{equation}
  \label{eq:isom_su2_and_so3}
  \sigma_1 \mapsto 2(e_2 \wedge e_3),\quad \sigma_2 \mapsto 2(e_3 \wedge e_1),\quad \sigma_3 \mapsto 2(e_1 \wedge e_2).
\end{equation}
For $X_2, X_3$, we can show the similar formulas. 
Hence, we have $\cone{\nabla}_X \pi^{\ast} \varphi = \pi^{\ast}\left(\nabla_X \varphi - \frac{1}{2}\pi_j(X)\varphi\right)$ for $X \in TM$.
Also, we have $\cone{\nabla}_{\partial_r} \pi^{\ast} \varphi = 0$.
If we use a representation $(\pi_{0,j}, W_{0,j} = \C \boxtimes W_j)$ instead of $(\pi_{j,0}, W_{j,0})$, then $\cone{S}_{0,j} \cong \pi^{\ast} S_j$ and we have
\[\cone{\nabla}_X \pi^{\ast} \varphi = \pi^{\ast}\left(\nabla_X \varphi + \frac{1}{2}\pi_j(X)\varphi\right).\]
In summary, we have the following theorem.
\begin{theoremalpha}
  \label{theo:cone_construction}
  Let $(M,g)$ be a 3-dimensional Riemannian spin manifold and $(\cone{M}, \cone{g})$ the cone over $(M,g)$. Then, the following two are equivalent:
  \begin{enumerate}
    \item $M$ admits a spin $(j + \frac{1}{2})$ Killing spinor with Killing number $\mu = \frac{1}{2}$ (resp. $\mu = -\frac{1}{2}$).
    \item The cone $\cone{M}$ admits a parallel spinor on the bundle $\cone{S}_{j,0}$ with helicity $j + \tfrac{1}{2}$ (resp. $\cone{S}_{0,j}$ with $-(j + \tfrac{1}{2})$).
  \end{enumerate}
\end{theoremalpha}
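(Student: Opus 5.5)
The plan is to prove both directions from the identity derived just before the statement. For $\varphi\in\Gamma(S_j)$ and $X\in TM$ we have
\[\cone{\nabla}_X \pi^{\ast}\varphi = \pi^{\ast}\bigl(\nabla_X\varphi - \tfrac{1}{2}\pi_j(X)\varphi\bigr), \qquad \cone{\nabla}_{\partial_r}\pi^{\ast}\varphi = 0,\]
under the identification $\cone{S}_{j,0}\cong\pi^{\ast}S_j$. The analogous formula with the sign $+\tfrac12$ holds on $\cone{S}_{0,j}$. By \emph{helicity $j+\tfrac12$} I mean that the spinor is a section of $\cone{S}_{j,0}$, on which only the self-dual factor of $\so(4)\cong\so(3)\oplus\so(3)$ acts nontrivially. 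The helicity $-(j+\tfrac12)$ case is the same with $\cone{S}_{0,j}$ and the anti-self-dual factor. The two sign cases are symmetric, so I will treat only $\mu=\tfrac12$.

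\textbf{(1) $\Rightarrow$ (2).} Let $\varphi$ satisfy $\nabla_X\varphi=\tfrac12\pi_j(X)\varphi$ for all $X\in TM$. The identity then gives $\cone{\nabla}_X\pi^{\ast}\varphi=0$ for horizontal $X$. Also $\cone{\nabla}_{\partial_r}\pi^{\ast}\varphi=0$. Since $X$ and $\partial_r$ together span $T\cone{M}$, the section $\pi^{\ast}\varphi$ is a nontrivial parallel section of $\cone{S}_{j,0}$.

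\textbf{(2) $\Rightarrow$ (1).} Let $\Phi\in\Gamma(\cone{S}_{j,0})$ be parallel and nontrivial.

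1. Because $\cone{\nabla}_{\partial_r}\Phi=0$, I want to conclude that $\Phi=\pi^{\ast}\varphi$ for some $\varphi\in\Gamma(S_j)$.
2. To justify this, I need that the trivialization of $\pi^{\ast}S_j$ along each ray $\{x\}\times\R_+$ by $\cone{\nabla}_{\partial_r}$ is the pullback trivialization. This is exactly what $\cone{\nabla}_{\partial_r}\pi^{\ast}\psi=0$ for every $\psi\in\Gamma(S_j)$ says. Writing $\Phi=\sum_a f_a\,\pi^{\ast}\psi_a$ in a local frame $\{\psi_a\}$ of $S_j$, the condition becomes $\partial_r f_a=0$, so the coefficients $f_a$ are independent of $r$.
3. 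Set $\varphi:=\Phi|_{r=1}$, so that $\Phi=\pi^{\ast}\varphi$. Applying the identity to horizontal $X$ gives $0=\pi^{\ast}(\nabla_X\varphi-\tfrac12\pi_j(X)\varphi)$.
4. Since $\pi^{\ast}$ is injective, $\nabla_X\varphi=\tfrac12\pi_j(X)\varphi$ for all $X$. So $\varphi$ is a Killing spinor with $\mu=\tfrac12$, and it is nontrivial because $\Phi$ is.

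\textbf{Main obstacle.} The only delicate point is the identification $\cone{S}_{j,0}\cong\pi^{\ast}S_j$ and the resulting formula for $\cone{\nabla}$, in particular the claim $\cone{\nabla}_{\partial_r}\pi^{\ast}\varphi=0$. I will justify this claim with the connection form, following the local computation already done for $\cone{\nabla}_{X_1}$. We have $\cone{\nabla}_{\partial_r}\cone{X}_k=0$ for all $k$: for $k\le 3$, $\cone{\nabla}_{\partial_r}(X_k/r)=\cone{\nabla}_{X_k}\partial_r/r-X_k/r^2=0$. Hence every connection coefficient in the $\partial_r$ direction vanishes in the frame $(\cone{X}_1,\cone{X}_2,\cone{X}_3,\partial_r)$, and $\cone{\nabla}_{\partial_r}$ is plain differentiation in this frame.

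The other ingredient already used in the horizontal formula is that anti-self-dual forms act trivially on $W_{j,0}$. This gives $\pi_{j,0}(\cone{X}_1\wedge\cone{X}_4)=\pi_{j,0}(\cone{X}_2\wedge\cone{X}_3)$. Together with \eqref{eq:isom_su2_and_so3}, this turns $\pi_j(X_2\wedge X_3)$ into $\tfrac12\pi_j(X_1)$. Granting the computation preceding the theorem, the proof consists only of the steps above.
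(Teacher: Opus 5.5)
Your proposal is correct and follows the paper's proof. For (1)$\Rightarrow$(2) you use the identity $\cone{\nabla}_X\pi^\ast\varphi=\pi^\ast(\nabla_X\varphi-\tfrac12\pi_j(X)\varphi)$ together with $\cone{\nabla}_{\partial_r}\pi^\ast\varphi=0$; for (2)$\Rightarrow$(1) you use $\cone{\nabla}_{\partial_r}\Phi=0$ to write $\Phi=\pi^\ast\varphi$ and read off the Killing equation from the horizontal directions. Your extra checks (that $\cone{\nabla}_{\partial_r}\cone{X}_k=0$, and the local-frame computation $\partial_r f_a=0$) simply make explicit the step the paper states as ``$\cone{\varphi}$ is independent of $r$.''
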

\begin{proof}
  By the above argument, if $\varphi \in \Gamma(S_j)$ is a spin $(j + \frac{1}{2})$ Killing spinor with Killing number $\mu = \frac{1}{2}$, then $\pi^{\ast}\varphi \in \Gamma(\cone{S}_{j,0})$ is parallel with respect to $\cone{\nabla}$. 
  Conversely, if $\cone{\varphi} \in \Gamma(\cone{S}_{j,0})$ is a parallel spinor on $\cone{M}$, then
  \begin{align}
    \cone{\nabla}_{\partial_r} \cone{\varphi} &= \frac{\partial \cone{\varphi}}{\partial r} = 0 \label{eq:cone1}\\
    \cone{\nabla}_X \cone{\varphi} &= X\cone{\varphi} + \frac{1}{2} \sum_{1 \leq k,l \leq 3} g(\nabla_{X} X_k, X_l) \pi_{j,0}(\cone{X}_k \wedge \cone{X}_l)\cone{\varphi} - \pi_{j,0}\left(\cone{X} \wedge \cone{X}_4\right)\cone{\varphi} = 0 \quad (\forall X \in TM). \label{eq:cone2}
  \end{align}
  From equation \eqref{eq:cone1}, $\cone{\varphi}$ is independent of $r$ so that $\varphi := \cone{\varphi}|_{M} \in \Gamma(S_j)$ satisfies $\cone{\varphi} = \pi^{\ast}\varphi$. Then, by equation \eqref{eq:cone2}, we have
  \[\nabla_X \varphi - \frac{1}{2}\pi_j(X)\varphi = 0 \quad (\forall X \in TM),\]
  that is, $\varphi$ is a spin $(j + \frac{1}{2})$ Killing spinor with Killing number $\mu = \frac{1}{2}$.
  The other case can be shown in the same way.
\end{proof}

\subsection{Higher spin twistor spinors}
The spin $(j + \frac{1}{2})$ Killing spinors are in $\ker T^+_j \cap \ker T^-_j$. So we study spinors in $\ker T^+_j \cap \ker T^-_j$.

\begin{defi}
  A spinor $\varphi \in \Gamma(S_j)$ is called a \emph{spin $(j + \frac{1}{2})$ twistor spinor} or \emph{higher spin twistor spinor} (or simply \emph{twistor spinor}) if $\varphi$ is in $\ker T^+_j \cap \ker T^-_j$.
\end{defi}
This definition is a natural generalization of usual twistor spinors (i.e. $j = 0$ case).
We showed the projection $S_j \otimes TM$ onto $S_j$ is given by
\[\Pi_0(\varphi \otimes X) = p_j(X)\varphi = \frac{1}{\sqrt{(2j+1)(2j+3)}}\pi_j(X)\varphi.\]
So the projection onto $S_{j+1} \oplus S_{j-1}$ is given by
\[\Pi^{\perp}_0 (\varphi \otimes X) = \varphi \otimes X + \frac{1}{(2j+1)(2j+3)}\sum_i \pi_j(e_i)\pi_j(X)\varphi \otimes e_i.\]
Here, we note that the embedding $S_j \to S_j \otimes TM$ is given by \eqref{eq:embedding_of_W_j}.
Thus the condition that spinor $\varphi \in \Gamma(S_j)$ is in $\ker T^+_j \cap \ker T^-_j$ is equivalent to
\[0 = \Pi^{\perp}_0 \left(\sum_i\nabla_{e_i}\varphi \otimes e_i\right) = \sum_i \left(\nabla_{e_i}\varphi + \frac{1}{(2j+1)(2j+3)} \pi_j(e_i)D_j\varphi\right) \otimes e_i.\]
Now we have the following proposition.

\begin{prop}
  \label{prop:twistor_spinor}
  The following two are equivalent.
  \begin{enumerate}
    \item $\varphi \in \Gamma(S_j)$ is a spin $(j + \frac{1}{2})$ twistor spinor.
    \item $\varphi \in \Gamma(S_j)$ satisfies the twistor equation:
    \begin{equation} \label{eq:twistor equation}
    \nabla_X \varphi + \frac{1}{2j+3}\pi_j(X)D_j\varphi = 0 \quad \forall X \in TM.
    \end{equation}
  \end{enumerate}
  In particular, a spinor $\varphi \in \Gamma(S_j)$ is a Killing spinor if and only if $\varphi$ is an eigenspinor of $D_j$ and in $\ker T^+_j \cap \ker T^-_j$.
\end{prop}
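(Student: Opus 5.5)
The proof is essentially a reorganization of the computation just before the statement. It uses the orthogonal decomposition $S_j \otimes TM \cong S_{j+1} \oplus S_j \oplus S_{j-1}$ and the explicit embedding \eqref{eq:embedding_of_W_j}.

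First, the kernel of $T^+_j$ together with $T^-_j$. For any $\varphi$, we have $T^\pm_j\varphi = \Pi^\pm_j(\nabla\varphi)$ up to the nonzero normalizing constants. So $\varphi \in \ker T^+_j \cap \ker T^-_j$ if and only if $\Pi^\perp_0(\nabla\varphi) = 0$, where $\nabla\varphi = \sum_i \nabla_{e_i}\varphi\otimes e_i$.

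Next, compute $\Pi^\perp_0(\nabla\varphi)$ from the displayed formula for $\Pi^\perp_0$. Substituting $X = e_k$ and summing, the correction term becomes $\frac{1}{(2j+1)(2j+3)}\sum_i \pi_j(e_i)\sum_k\pi_j(e_k)\nabla_{e_k}\varphi\otimes e_i$. By the normalization $D_j = \frac{1}{2j+1}\sum_k \pi_j(e_k)\nabla_{e_k}$, this equals $\frac{1}{2j+3}\sum_i\pi_j(e_i)D_j\varphi\otimes e_i$. Hence
\[
\Pi^\perp_0(\nabla\varphi) = \sum_i\Bigl(\nabla_{e_i}\varphi + \tfrac{1}{2j+3}\pi_j(e_i)D_j\varphi\Bigr)\otimes e_i .
\]
I should double-check this coefficient against the slightly different one written in the text, since there the factor $2j+1$ from $D_j$ appears to have been left unabsorbed. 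The coefficient $\frac{1}{2j+3}$ is the consistent one: with it, the Killing case $\nabla_X\varphi = \mu\pi_j(X)\varphi$ gives $D_j\varphi = -(2j+3)\mu\varphi$, matching the earlier proposition. Since $\{e_i\}$ is a frame, the vanishing of this tensor is equivalent to \eqref{eq:twistor equation} holding for every $e_i$, and hence for every $X$ by linearity. This proves (1)$\Leftrightarrow$(2).

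For the final assertion, one direction is already known. A Killing spinor lies in $\ker T^+_j\cap\ker T^-_j$ by Proposition \ref{prop:space of the higher Killing spinor}, and it is an eigenspinor with $D_j\varphi = -(2j+3)\mu\varphi$. Conversely, suppose $\varphi$ is a twistor spinor with $D_j\varphi = \lambda\varphi$. Then \eqref{eq:twistor equation} gives $\nabla_X\varphi = -\frac{\lambda}{2j+3}\pi_j(X)\varphi$. So $\varphi$ is a Killing spinor with $\mu = -\lambda/(2j+3)$, assuming $\varphi$ is non-trivial as the definition requires.

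The only delicate point is the bookkeeping of the constants: the normalization of $\pi_j$ versus $p_j$, and the factor $\frac{1}{2j+1}$ in $D_j$. I would verify these against the Killing-case consistency check above.
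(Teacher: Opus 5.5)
Your proposal is correct and follows the same route as the paper. The paper's argument is the computation just before the statement: remove the $S_j$-component of $\nabla\varphi$ using the embedding \eqref{eq:embedding_of_W_j}, read off the twistor equation componentwise, and obtain the Killing characterization by substituting $D_j\varphi=\lambda\varphi$. You are also right that the display preceding the statement should have coefficient $\frac{1}{2j+3}$, not $\frac{1}{(2j+1)(2j+3)}$, because the factor $2j+1$ is absorbed by the normalization of $D_j$; the coefficient in the statement itself is the correct one.
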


Next, we calculate the upper bound of the dimension of the space of spin $(j + \frac{1}{2})$ twistor spinors. 

\begin{prop}
  On a 3-dimensional spin manifold (not necessarily compact),
  \begin{equation} \label{eq:dim_of_twistorspinors}
  \dim \ker T^+_j \cap \ker T^-_j \leq 2 \times (2j+2).
  \end{equation}
  This is a sharp estimate. In fact, $\R^3, \sphere{3}$ and $\hb{3}$ are examples satisfying the equality.
\end{prop}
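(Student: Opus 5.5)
The plan is to prolong the twistor equation \eqref{eq:twistor equation} into a parallel-transport system on the bundle $E := S_j \oplus S_j$. Concretely, for a twistor spinor $\varphi$ I set $\psi := D_j\varphi$ and show that $\nabla_X\psi$ is an algebraic expression in $\varphi$ and $\psi$ with curvature coefficients. Then $(\varphi,\psi)$ is parallel for a connection on $E$, so it is determined by its value at a single point. This gives $\dim \ker T^+_j \cap \ker T^-_j \le \rank E = 2(2j+2)$, assuming $M$ is connected.

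To compute $\nabla_X D_j\varphi$, differentiate \eqref{eq:twistor equation} once more and antisymmetrize. Working at a point with $\nabla e_i = 0$, we have $\nabla_{e_k}\nabla_{e_l}\varphi = -\frac{1}{2j+3}\pi_j(e_l)\nabla_{e_k}\psi$. Hence
\[
R_j(e_k,e_l)\varphi = -\frac{1}{2j+3}\bigl(\pi_j(e_l)\nabla_{e_k}\psi - \pi_j(e_k)\nabla_{e_l}\psi\bigr).
\]
I then contract this with $\pi_j(e_k)$ and sum over $k$. Using the Casimir relation $\sum_k \pi_j(e_k)\pi_j(e_k) = -(2j+1)(2j+3)$ and the commutator $\pi_j(e_k)\pi_j(e_l) - \pi_j(e_l)\pi_j(e_k) = \pi_j([e_k,e_l]_{\g})$, the left side becomes an algebraic curvature term $\sum_k \pi_j(e_k)R_j(e_k,e_l)\varphi$. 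The right side becomes a combination of $\nabla_{e_l}\psi$, $\pi_j(e_l)D_j\psi$, and $\sum_k \pi_j([e_k,e_l]_{\g})\nabla_{e_k}\psi$.

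The last term is awkward. I would handle it by also using $T^\pm_j$-type information on $\psi$: either apply the twisted Weitzenb\"ock formulas \eqref{eq:twisted_weitzenbock_formula+} and \eqref{eq:twisted_weitzenbock_formula-} with $T^\pm_j\varphi = 0$, which express $T^\pm_j\psi$ through $q^\pm(R)\varphi$, or apply \eqref{eq:wetzenbock_formula}, which expresses $D_j\psi = D_j^2\varphi$ through $q(R)\varphi$. Since $D_j\psi$, $T^+_j\psi$ and $T^-_j\psi$ together determine $\nabla\psi$ (they are the three components of $\nabla\psi$ under $W_j\otimes\C^3 \cong W_{j+1}\oplus W_j\oplus W_{j-1}$), all components of $\nabla\psi$ are then algebraic in $\varphi$. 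The result is an equation of the form $\nabla_X\psi = A(X)\varphi$ with $A \in \Gamma(T^*M\otimes\End(S_j))$ built from the Ricci tensor. This is the main obstacle: one must check that each component really is expressible without derivatives, which I expect to follow from the twisted Weitzenb\"ock formulas as indicated. Once this holds, the connection $\widehat\nabla_X(\varphi,\psi) = (\nabla_X\varphi + \frac{1}{2j+3}\pi_j(X)\psi,\ \nabla_X\psi - A(X)\varphi)$ does the job.

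For sharpness, I would exhibit $2(2j+2)$ independent twistor spinors on each of the three models. On $\R^3$, the constant spinors and the spinors $\varphi(x) = \phi_0 - \frac{1}{2j+3}\pi_j(x)\psi_0$ with constant $\phi_0,\psi_0$ suffice. On $\sphere{3}$ and $\hb{3}$, the Killing spinors with Killing numbers $\pm\mu$ (real on $\sphere{3}$, imaginary on $\hb{3}$) constructed in \S\ref{sec:higher_killing_on_S3} and \S\ref{sec:higher_killing_on_H3} give two spaces of dimension $2j+2$ each. They are twistor spinors by Proposition \ref{prop:twistor_spinor}. Being eigenspinors of $D_j$ with distinct eigenvalues, the two spaces intersect trivially, so together they span a space of dimension $2(2j+2)$.
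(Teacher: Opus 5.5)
Your proposal is correct. Its skeleton matches the paper's: prolong to $S_j\oplus S_j$ via $\psi=D_j\varphi$, show $\nabla_X\psi$ is algebraic in $\varphi$, count parallel sections, and use the same three models for sharpness. The key step, however, is done differently.

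The paper keeps the antisymmetrized second-derivative identity and contracts it with $\pi_j(e_i)$. It then removes the awkward term $\sum_i\pi_j([e_i,X]_{\g})\nabla_{e_i}(D_j\varphi)$ using \eqref{eq:comm_rel_of_pipm} together with $D_j^2\varphi=\frac{2j+3}{2j+1}q(R)\varphi$ and $T^-_jD_j\varphi=q^-(R)\varphi$. It never uses the $q^+$ formula.

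Your decomposition idea makes that contraction unnecessary, and the step you flag as the main obstacle does close. For $\varphi\in\ker T^+_j\cap\ker T^-_j$, the formulas \eqref{eq:wetzenbock_formula}, \eqref{eq:twisted_weitzenbock_formula+} and \eqref{eq:twisted_weitzenbock_formula-} give
\[
D_j\psi=\tfrac{2j+3}{2j+1}\,q(R)\varphi,\qquad T^+_j\psi=-\tfrac{2j+3}{2j+1}\,q^+(R)\varphi,\qquad T^-_j\psi=q^-(R)\varphi .
\]
Applying the first identity of Lemma \ref{lemma:algebraic_weitzenbock_formula} to $\nabla_{e_l}\psi$ then reconstructs
\[
\nabla_X\psi=\tfrac{(2j+3)^2}{4(j+1)(2j+1)}\,\pi^-_{j+1}(X)q^+(R)\varphi-\tfrac{1}{2j+1}\,\pi_j(X)q(R)\varphi-\tfrac{2j+1}{4(j+1)}\,\pi^+_{j-1}(X)q^-(R)\varphi .
\]
Two corrections to your write-up follow from this. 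All three formulas are needed, so your ``either \ldots or'' should read ``and''. And the antisymmetrize-and-contract computation can be dropped entirely.

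What each route buys:
\begin{itemize}
\item Yours gives a symmetric closed form for $A(X)$ purely in terms of $q(R)$ and $q^{\pm}(R)$. It transfers verbatim to any setting where the full set of (twisted) Weitzenb\"ock formulas is available.
\item The paper's is a direct transcription of the classical spin-$\frac12$ argument and avoids the $q^+$ formula. The price is the extra Ricci-type term $\sum_i\pi_j(e_i)R_j(X,e_i)$ in its expression.
\end{itemize}

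For sharpness, your examples coincide with the paper's. Your remark that the two Killing spaces meet trivially, because they lie in distinct $D_j$-eigenspaces ($D_j\varphi=-(2j+3)\mu\varphi$ with $\mu\neq0$), supplies a step the paper leaves implicit.
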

First, let us construct twistor spinors on $\R^3$. If $u$ is a constant spinor on $\R^3$ with spin $(j + \frac{1}{2})$, then it is a parallel spinor and hence automatically a twistor spinor. Furthermore, let $(x^1,x^2,x^3)$ denote the standard coordinate on $\R^3$, and $(e_1,e_2,e_3)$ the standard orthonormal frame. For the vector field $x=x^1e_1+x^2e_2+x^3e_3$, it follows directly from the twistor equation \eqref{eq:twistor equation} that $\pi_j(x)u$ is another twistor spinor. Thus, the space of all twistor spinors on $\R^3$ can be explicitly expressed as 
\[\ker T^+_j \cap \ker T^-_j=\left\{ u+\pi_j(x)v\mid u,v \text{ is a constant spinor} \right\},\]
which gives the limiting case of the estimate \eqref{eq:dim_of_twistorspinors}.
We will show that the equality holds on $\sphere{3}$ in \S \ref{sec:higher_killing_on_S3} and $\hb{3}$ in \S \ref{sec:higher_killing_on_H3}.
\begin{proof}
  The following argument is a generalization of the one for the spin $\tfrac{1}{2}$ case (see \cite{BFGK}).
  For $\varphi \in \ker T^+_j \cap \ker T^-_j$, taking the covariant derivative of the twistor equation \eqref{eq:twistor equation} yields
  \[
  \nabla_Y\nabla_X\varphi+\dfrac{1}{2j+3}\pi_j(\nabla_YX)D_j\varphi+\dfrac{1}{2j+3}\pi_j(X)\nabla_Y(D_j\varphi)=0 \quad \forall X,Y\in TM.
  \]
  By taking the difference between this equation and the one obtained by interchanging the roles of $X$ and $Y$, we get
  \[R_j(X, Y)\varphi + \frac{1}{2j+3}(\pi_j(Y)\nabla_X(D_j\varphi) - \pi_j(X)\nabla_{Y}(D_j\varphi)) = 0.\]
  Putting $Y=e_i$, multiplying $\pi_j(e_i)$ to this equation from left, and summing over $i$, we have
  \begin{align}
    -(2j+1)\nabla_X(D_j \varphi) &= -\sum_{i=1}^3 \left(\pi_j(e_i)R_j(X, e_i)\varphi - \frac{1}{2j+3}\pi_j(e_i)\pi_j(X)\nabla_{e_i}(D_j\varphi)\right) \notag\\
    &= -\sum_{i=1}^3 \left(\pi_j(e_i)R_j(X, e_i)\varphi - \pi_j(X)q(R)\varphi - \frac{1}{2j+3}\pi_j([e_i,X])\nabla_{e_i}(D_j\varphi)\right). \label{eq:the dimension of twistor spinors1}
  \end{align}
  On the other hand, recalling equation \eqref{eq:comm_rel_of_pipm}, we see that
  \[
  \dfrac{1}{2(2j+1)}\pi_j(X)\pi_j(Y)+\dfrac{2j+1}{2}\pi_{j-1}^+(X)\pi_j^-(Y)=-\dfrac{1}{4}\pi_j([Y,X]_{\g})-\left(j+\dfrac{1}{2}\right)\langle X,Y\rangle \quad \forall X,Y\in TM.
  \]
  Putting $Y=e_i$, acting this equation on $\nabla_{e_i}(D_j\varphi)$, and summing over $i$, we have
  \[
  \dfrac{1}{2}\pi_j(X)D_j^2\varphi+\dfrac{2j+1}{2}\pi_{j-1}^+(X)T_j^-D_j\varphi=-\dfrac{1}{4}\sum_{i=1}^3\pi_j([e_i,X]_{\g})\nabla_{e_i}(D_j\varphi)-\left(j+\dfrac{1}{2}\right)\nabla_X(D_j\varphi).
  \]
  Noting $D_j^2\varphi=\dfrac{2j+3}{2j+1}q(R)\varphi$ from equation \eqref{eq:wetzenbock_formula} and taking into account formula \eqref{eq:twisted_weitzenbock_formula-}, we have a simplified expression
  \begin{equation} \label{eq:the dimension of twistor spinors2}
   \sum_{i=1}^{3} \pi_j([e_i,X]_{\g})\nabla_{e_i}(D_j\varphi) = -2(2j+1)\nabla_X(D_j\varphi) - 2\dfrac{2j+3}{2j+1}\pi_j(X)q(R)\varphi - 2(2j+1)\pi_{j-1}^+(X)q^-(R)\varphi.
  \end{equation}
  Combining the above two equations \eqref{eq:the dimension of twistor spinors1}, \eqref{eq:the dimension of twistor spinors2}, we get
  \[
  \nabla_X(D_j\varphi)=\dfrac{2j+3}{(2j+1)^2}\left\{-\dfrac{2j-1}{2j+1}\pi_j(X)q(R)+2\dfrac{2j+1}{2j+3}\pi_{j-1}^+(X)q^-(R)+\sum_{i=1}^3\pi_j(e_i)R_j(X,e_i)\right\}\varphi.
  \]
  Let us denote the right-hand side by $K(X) \in \Gamma(\End(S_j))$, and define a new covariant derivative on $\Gamma(S_j \oplus S_j)$ by
  \[
  \nabla^s_X = \left(
  \begin{array}{ccc}
  \nabla_X & \frac{1}{2j+3}\pi_j(X) \\[1ex]
  K(X) & \nabla_X
  \end{array}
  \right).
  \]
  Then, we know that $\varphi$ is in $\ker T^+_j \cap \ker T^-_j$ if and only if $(\varphi,D_j\varphi)\in\Gamma(S_j\oplus S_j)$ is parallel with respect to $\nabla^s$. Finally, since the bundle $S_j$ has rank $2j+2$, we arrive at the inequality.
\end{proof}

\subsection{Eigenvalue estimate for the higher spin Dirac operator}
\label{sec:eigenvalue_estimate}
In the usual spinor case ($j = 0$), there is a well-known eigenvalue estimate for the Dirac operator on compact Riemannian spin manifolds due to Th. Friedrich \cite{Fri80}.
Moreover, the limiting case of this estimate is characterized by the existence of Killing spinors.
Indeed, Killing spinors are eigenspinors of the Dirac operator attaining the limiting case.

We shall show a similar eigenvalue estimate for the higher spin Dirac operator $D_j$ on compact 3-dimensional Riemannian spin manifolds.
First, the space of the sections of $S_j$ on a compact Riemannian spin manifold $M$ is decomposed into the direct sum
\[\Gamma(S_j) = \ker (T_{j-1}^+)^{\ast} \oplus T_{j-1}^+(\Gamma(S_{j-1})) = \ker T_j^- \oplus T_{j-1}^+(\Gamma(S_{j-1})).\]
This follows from the fact that $T_{j-1}^+$ is an overdetermined elliptic operator since the principal symbol of $T_{j-1}^+$ is $\sigma_{\xi, x}(T_{j-1}^+) = \pi_{j-1}^+(\xi)$ (a more detailed dicussion can be found in \cite{HT}).

In the physical context, we often consider a massive Dirac equation $(\sldirac + M)\psi_{\mu_1 \cdots \mu_j} = 0$ with additional condition $\gamma^{\mu}\psi_{\mu \mu_2 \cdots \mu_j} = 0$. 
Here, $\mu_1 \cdots \mu_j$ are symmetric spacetime indices and each $\psi_{\mu_1 \cdots \mu_j}$ is a spinor field with spin $\frac{1}{2}$.
If $M \neq 0$, any solution $\psi$ satisfy $\psi^\mu_{\,\,\,\,\mu\mu_3 \cdots \mu_j} = 0$ and $\partial^{\mu} \psi_{\mu\mu_2 \cdots \mu_j} = 0$ (see \cite{RS41} and \cite{BennTucker87}). 
After normalizing the constant, the twisted Dirac operator is of the form (see Remark \ref{rema:twisted_Dirac})
\[\sldirac = \begin{pmatrix}
  D_j & T_{j-1}^+ \\
  T_{j}^- & D_{j-1}
\end{pmatrix},\]
and the additional condition $\gamma^{\mu}\psi_{\mu \mu_2 \cdots \mu_j} = 0$ means that spin $(j - \frac{1}{2})$ component of $\psi$ is zero, namely, $\psi = {}^t(\tilde{\psi}, 0) \in \Gamma(S_j \oplus S_{j-1})$ for some $\tilde{\psi} \in \Gamma(S_j)$.
Then, the massive Dirac equation reduces to
\[(D_j + M)\tilde{\psi} = 0, \quad T_j^- \tilde{\psi} = 0.\]
Thus, it is physically meaningful to study the eigenvalue problem of $D_j$ on $\ker T_j^-$.
\begin{theoremalpha}
  \label{theo:eigenvalue_estimate}
  Let $(M,g)$ be a compact 3-dimensional Riemannian spin manifold and 
  \begin{align*}
    r_0 := \min_M q(R) :=& \min \{d \in \R \mid x \in M, d \text{ is an eigenvalue of } q(R)_x \}\\
    =& \max \{d \in \R \mid \forall x \in M, q(R)_x \geq d\}
  \end{align*}
  Then, the first eigenvalue $\lambda^2$ of $D_j^2$ on $\ker T_j^-$ satisfies
  \[\lambda^2 \geq \frac{2j+3}{2j+1}r_0.\]
  The equality holds if and only if there exists a spin $(j + \frac{1}{2})$ Killing spinor.
\end{theoremalpha}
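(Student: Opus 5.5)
The plan is to combine the two Weitzenb\"ock identities of Proposition \ref{prop:Bochner-Weitzenbock} so that the $T^-_{j+1}T^+_j$ term drops out, and then to use that $\varphi\in\ker T^-_j$. Multiply the first identity by $(j+\tfrac12)$ and add the second. This gives
\[
\left(j+\tfrac{3}{2}\right)\frac{2j+1}{2j+3}D_j^2 + (2j+2)\frac{2j+1}{4(j+1)}T^+_{j-1}T^-_j = \left(j+\tfrac12\right)\nabla^*\nabla + q(R).
\]
For $\varphi\in\ker T^-_j$ the second term vanishes, so $\tfrac{2j+1}{2}D_j^2\varphi = (j+\tfrac12)\nabla^*\nabla\varphi + q(R)\varphi$. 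Here $\nabla^*\nabla$ still contains the part $\frac{2j+1}{2j+3}D_j^2$ coming from the $S_j$-component of $\nabla\varphi$. To get the sharp constant I will not drop all of $\nabla^*\nabla$. Instead I will use the twistor-type splitting already written down before Proposition \ref{prop:twistor_spinor}: the modified derivative
\[
\mathcal T_X\varphi := \nabla_X\varphi + \tfrac{1}{2j+3}\pi_j(X)D_j\varphi
\]
is exactly the $S_{j+1}\oplus S_{j-1}$-projection of $\nabla\varphi$.

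\textbf{Step 1 (pointwise norm identity).} Using $\sum_i\pi_j(e_i)^*\pi_j(e_i)=(2j+1)(2j+3)$ and the definition of $D_j$, I will check that
\[
|\nabla\varphi|^2 = |\mathcal T\varphi|^2 + \frac{2j+1}{2j+3}|D_j\varphi|^2 .
\]
This is routine from the orthogonal decomposition $S_j\otimes TM\cong S_{j+1}\oplus S_j\oplus S_{j-1}$.

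\textbf{Step 2 (integrate).} Integrate over compact $M$ against an eigenspinor $\varphi\in\ker T^-_j$ with $D_j^2\varphi=\lambda^2\varphi$. Note that $\ker T_j^-$ should be $D_j^2$-invariant in the relevant sense; at minimum the problem is posed there, so I take $\varphi$ with $T_j^-\varphi=0$ and $D_j\varphi=\pm\lambda\varphi$ after diagonalising $D_j$ on the eigenspace if needed. This yields
\[
\tfrac{2j+1}{2}\lambda^2\|\varphi\|^2 = \left(j+\tfrac12\right)\Big(\|\mathcal T\varphi\|^2+\tfrac{2j+1}{2j+3}\lambda^2\|\varphi\|^2\Big)+\int\langle q(R)\varphi,\varphi\rangle .
\]
Rearranging, $\tfrac{2j+1}{2}\cdot\tfrac{2}{2j+3}\lambda^2\|\varphi\|^2 = (j+\tfrac12)\|\mathcal T\varphi\|^2+\int\langle q(R)\varphi,\varphi\rangle \ge r_0\|\varphi\|^2$. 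That is, $\lambda^2\ge\frac{2j+3}{2j+1}r_0$. I will double-check the constants here; they must reproduce the factor $\frac{2j+3}{2j+1}$, consistent with $D_j^2\varphi=\frac{2j+3}{2j+1}q(R)\varphi$ used in the twistor section.

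\textbf{Step 3 (equality).} If equality holds, then $\mathcal T\varphi=0$, so $\varphi\in\ker T^+_j\cap\ker T^-_j$. A subtle point is that the statement concerns $\lambda^2$ for $D_j^2$, not an eigenvalue of $D_j$. Since $D_j$ preserves neither $\ker T_j^-$ a priori nor the $D_j^2$-eigenspace trivially, I would handle this as follows. Twistor spinors satisfy $D_j^2\varphi=\frac{2j+3}{2j+1}q(R)\varphi$, and equality forces $q(R)\varphi=r_0\varphi$. Then I would show $D_j\varphi$ is also a twistor spinor (via the twisted formulas \eqref{eq:twisted_weitzenbock_formula+}, \eqref{eq:twisted_weitzenbock_formula-}), so that $\varphi\pm\lambda^{-1}D_j\varphi$ are eigenspinors of $D_j$ lying in $\ker T^+_j\cap\ker T^-_j$. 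Proposition \ref{prop:twistor_spinor} then makes them Killing spinors. Conversely, a Killing spinor has $D_j\varphi=-(2j+3)\mu\varphi$ and $q(R)\varphi=\mu^2(2j+3)(2j+1)\varphi$ by Corollary \ref{corr:Killingnum_on_3dim}. By Theorem \ref{theo:3dim_Killing_and_Einstein} the metric has constant curvature, so $q(R)$ is constant (by Proposition \ref{prop:expression_of_q(R)}) and $r_0=\mu^2(2j+3)(2j+1)$, which gives equality. For $\mu$ imaginary, $r_0<0$ and the bound is trivial and not attained, so only real $\mu$ is relevant.

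The main obstacle is Step 3: passing from a $D_j^2$-eigenspinor in $\ker T^-_j$ to a genuine $D_j$-eigenspinor, and checking that $D_j$ preserves the twistor space. I would try first to apply $T^-_j$ to $D_j\varphi$ using \eqref{eq:twisted_weitzenbock_formula-} together with the vanishing of $q^-(R)$ under the constant-curvature-type information extracted from equality.
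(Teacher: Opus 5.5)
Your inequality is correct and is, in substance, the paper's argument. The paper restricts the second Weitzenb\"ock formula \eqref{eq:wetzenbock_formula} to $\ker T_j^-$ and gets $\frac{2j+1}{2j+3}\|D_j\varphi\|^2=(q(R)\varphi,\varphi)+\frac{(2j+1)(2j+3)}{8(j+1)}\|T_j^+\varphi\|^2$. Your detour (adding $(j+\frac12)$ times the first formula, then splitting $|\nabla\varphi|^2$) reproduces exactly this identity, because pointwise $|\mathcal T\varphi|^2=\frac{2j+3}{4(j+1)}|T_j^+\varphi|^2+\frac{2j+1}{4(j+1)}|T_j^-\varphi|^2$. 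The converse direction also matches the paper. On compact $M$ an imaginary Killing number cannot occur at all, since a Killing spinor is an eigenspinor of the formally self-adjoint $D_j$ with eigenvalue $-(2j+3)\mu$.

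The gap is in the equality case, though you are right that there is a real subtlety there. If you only know $T_j^-\varphi=0$ and $D_j^2\varphi=\lambda^2\varphi$, equality gives $T_j^+\varphi=0$ and $(q(R)-r_0)\varphi=0$ pointwise, and nothing more.

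\begin{itemize}
\item Your Step 2 remark about ``diagonalising $D_j$ on the eigenspace'' is not available, because $D_j$ does not preserve $\ker T_j^-$. By \eqref{eq:twisted_weitzenbock_formula-}, $T_j^-D_j\varphi=q^-(R)\varphi$ on $\ker T_j^-$. By \eqref{eq:twisted_weitzenbock_formula+}, $T_j^+D_j\varphi=-\frac{2j+3}{2j+1}q^+(R)\varphi$ for a twistor spinor.
\item Hence your Step 3 plan (showing $D_j\varphi$ is again a twistor spinor) is equivalent to proving $q^\pm(R)\varphi=0$, and the equality case does not supply this. The vanishing $q^\pm(R)\varphi=0$ in Corollary \ref{corr:Killingnum_on_3dim} and the constant curvature of Theorem \ref{theo:3dim_Killing_and_Einstein} are consequences of already having a Killing spinor. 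Appealing to ``constant-curvature-type information'' here is therefore circular.
\end{itemize}

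The paper sidesteps this by reading the eigenvalue problem as in its motivating system $(D_j+M)\tilde\psi=0$, $T_j^-\tilde\psi=0$. That is, it takes $\varphi\in\ker T_j^-$ with $D_j\varphi=\lambda\varphi$. Equality then gives $T_j^+\varphi=0$, and Proposition \ref{prop:twistor_spinor} yields $\nabla_X\varphi=-\frac{\lambda}{2j+3}\pi_j(X)\varphi$ at once.

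You have two options. Adopt that reading, in which case Step 3 is one line. Or supply a genuine argument producing a $D_j$-eigenspinor in $\ker T_j^+\cap\ker T_j^-$. As written, Step 3 is not a proof.
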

\begin{proof}
  For $\varphi \in \ker T_j^-$, the Weitzenb\"ock formula \eqref{eq:wetzenbock_formula} implies
  \[\|D_j\varphi\|^2_{L^2} = \frac{2j+3}{2j+1} ( q(R)\varphi, \varphi)_{L^2} + \frac{(2j+3)^2}{8(j+1)} \|T_j^+ \varphi\|^2_{L^2} \geq \frac{2j+3}{2j+1}r_0\|\varphi\|^2_{L^2}.\]
  Hence, the first eigenvalue $\lambda^2$ of $D_j^2$ on $\ker T_j^-$ satisfies
  \[\lambda^2 \geq \frac{2j+3}{2j+1}r_0.\]
  
  If the equality holds, then the above inequality shows that $\varphi \in \ker T_j^- \cap \ker T_j^+$, that is, $\varphi$ is a twistor spinor. Hence from Proposition \ref{prop:twistor_spinor}, $\varphi$ is a Killing spinor. 

  Conversely, if there exists a spin $(j + \frac{1}{2})$ Killing spinor $\varphi$ with Killing number $\mu$, then $M$ is an Einstein manifold by Theorem \ref{theo:3dim_Killing_and_Einstein} and Proposition \ref{prop:expression_of_q(R)} says that a curvature action $q(R)$ is constant such that
  \[q(R) = \frac{\scal}{24}(2j+3)(2j+1).\]
  Thus, $r_0 = \frac{\scal}{24}(2j+3)(2j+1)$ and we have
  \[D_j^2 \varphi = (2j+3)^2\mu^2 \varphi =  (2j+3)^2 \frac{\scal}{24} \varphi = \frac{2j+3}{2j+1}r_0 \varphi.\]
\end{proof}

\subsection{Higher spin Killing spinors on $\sphere{3}$}
\label{sec:higher_killing_on_S3}
Since the scalar curvature of $\sphere{3}$ is $\scal = 6$, higher spin Killing spinors on $\sphere{3}$ are of Killing number $\mu = \pm 1/2$ by Theorem \ref{theo:3dim_Killing_and_Einstein}.
First, we see that spin $(j+\frac{1}{2})$ spinor bundle $S_j$ on $\sphere{3}$ can be trivialized by spin $(j+\frac{1}{2})$ Killing spinors.

\begin{prop}
  \label{prop:triv_Sj_on_S3}
  The spin $(j + \frac{1}{2})$ spinor bundle $S_j$ on $\sphere{3}$ can be trivialized by the spin $(j + \frac{1}{2})$ Killing spinors for $\mu = \frac{1}{2}$ as well as for $\mu = -\frac{1}{2}$. In particular, the dimension of the space of spin $(j + \frac{1}{2})$ Killing spinors is $2j + 2$ for each $\mu = \pm\frac{1}{2}$.
  Thus, the dimension of the space of spin $(j + \tfrac{1}{2})$ twistor spinors is $2(2j + 2)$.
\end{prop}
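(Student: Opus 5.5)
We plan to use the group structure $\sphere{3}\cong \SU(2)$ together with the cone construction (Theorem \ref{theo:cone_construction}). The cone over the round $\sphere{3}$ is flat $\R^4\setminus\{0\}$. On $\R^4$ both $\cone{S}_{j,0}$ and $\cone{S}_{0,j}$ are trivial with the flat connection, so each carries a $(2j+2)$-dimensional space of constant (hence parallel) sections $\cone{\varphi}_1,\dots,\cone{\varphi}_{2j+2}$, and these are pointwise linearly independent. By Theorem \ref{theo:cone_construction}, their restrictions to $\{r=1\}=\sphere{3}$ give spin $(j+\frac12)$ Killing spinors $\varphi_1,\dots,\varphi_{2j+2}$ with $\mu=\frac12$ (from $\cone{S}_{j,0}$) and, respectively, $\mu=-\frac12$ (from $\cone{S}_{0,j}$).

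We check that the identification $\cone{S}_{j,0}|_{\sphere{3}}\cong S_j$ is fibrewise an isomorphism, which holds because the restriction of $W_{j,0}$ to the diagonal $\SU(2)$ is $W_j$. Pointwise linear independence on the cone then passes to pointwise linear independence of the $\varphi_k$ on $\sphere{3}$. Since $\rank S_j=2j+2$, these sections form a global frame, so $S_j$ is trivialized for each sign of $\mu$.

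For the dimension count, the upper bound $\dim\le 2j+2$ comes from Proposition \ref{prop:dim_of_Killingspinors}, since Killing spinors are parallel for $\widetilde\nabla$ and are therefore determined by their value at one point. Combined with the lower bound from the frame, this gives exactly $2j+2$ for each $\mu=\pm\frac12$.

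For twistor spinors, Killing spinors with $\mu=\frac12$ and $\mu=-\frac12$ lie in $\ker T^+_j\cap\ker T^-_j$. They are eigenspinors of $D_j$ with the distinct eigenvalues $\mp(2j+3)/2$, so the two spaces intersect trivially. This gives $\dim\ge 2(2j+2)$, and the inequality \eqref{eq:dim_of_twistorspinors} gives equality.

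The main obstacle is making the cone step rigorous globally. The cone over $\sphere{3}$ is $\R^4\setminus\{0\}$, and we must make sure the spin structure $\pi^\ast(P_{\Spin}\sphere{3}\times_{\Spin(3)}\Spin(4))$ agrees with the trivial one on $\R^4\setminus\{0\}$. This holds because $\R^4\setminus\{0\}$ is simply connected, so its spin structure is unique. We also need the Levi-Civita connection on the associated bundle to be the flat trivial one, which is automatic. The sign conventions matching $\cone{S}_{j,0}$ with $\mu=+\frac12$ are exactly those already fixed in Theorem \ref{theo:cone_construction}.

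As an alternative check avoiding the cone, we could use the left-invariant frame on $\SU(2)$. In that frame the spin connection acts as $\pm\frac12\pi_j(X)$ on left- or right-trivialized spinors, so constant sections in the appropriate trivialization directly satisfy $\nabla_X\varphi=\pm\frac12\pi_j(X)\varphi$.
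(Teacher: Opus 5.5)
Your proposal is correct, but it reaches the key step by a different route than the paper.

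The paper argues intrinsically on $\sphere{3}$. The curvature of $\widetilde{\nabla}_X=\nabla_X-\mu\pi_j(X)$ is $R_j(X,Y)+\mu^2\pi_j([X,Y]_{\g})$, and for the round metric $R_j(X,Y)=-\frac{1}{4}\pi_j([X,Y]_{\g})$. So $\widetilde{\nabla}$ is flat for $\mu=\pm\frac{1}{2}$, and simple connectivity of $\sphere{3}$ gives a global $\widetilde{\nabla}$-parallel frame.

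You instead use that the cone over the round $\sphere{3}$ is flat $\R^4\setminus\{0\}$. You then push constant sections of $\cone{S}_{j,0}$ and $\cone{S}_{0,j}$ down through the correspondence in the proof of Theorem~\ref{theo:cone_construction}. The underlying mechanism is the same. The computation preceding that theorem shows that $\cone{\nabla}$ acts on pulled-back sections exactly as $\widetilde{\nabla}$ with $\mu=\frac{1}{2}$, so flatness of the cone forces flatness of $\widetilde{\nabla}$. You then use simple connectivity of $\R^4\setminus\{0\}$ in place of that of $\sphere{3}$. For that step it is enough that an associated bundle over a flat simply connected manifold has trivial holonomy; uniqueness of the spin structure is not really needed.

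What each approach buys:
\begin{itemize}
\item Your route avoids the curvature bookkeeping. It also explains conceptually why $\mu=\pm\frac{1}{2}$ correspond to the two chiral factors $W_j\boxtimes\C$ and $\C\boxtimes W_j$, which matches the later description via $K^{\pm}_{2j+1}$.
\item The paper's route is self-contained and would carry over verbatim to $\hb{3}$ with $\mu=\pm\frac{i}{2}$, where the Riemannian cone theorem does not apply. There $R_j(X,Y)=+\frac{1}{4}\pi_j([X,Y]_{\g})$ and $\mu^2=-\frac{1}{4}$.
\end{itemize}

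Your twistor count makes explicit what the paper leaves implicit. The two Killing spaces are $D_j$-eigenspaces for the distinct eigenvalues $\mp\frac{2j+3}{2}$, hence independent, and \eqref{eq:dim_of_twistorspinors} gives the matching upper bound.
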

\begin{proof}
  We set the modified connection $\widetilde{\nabla}_X = \nabla_X - \mu\pi_j(X)$ for $\mu = \pm\frac{1}{2}$. Since the Riemannian curvature tensor $R$ on $\sphere{3}$ is given by
  \[R(X, Y)Z = g(Y,Z)X - g(X,Z)Y \quad (\text{for } X,Y,Z \in TM),\]
  the curvature tensor $R_j$ on $S_j$ is given by
  \[R_j(X, Y) = -\frac{1}{4}\pi_j([X,Y]_{\g}).\]
  Therefore curvature tensor $R^{\widetilde{\nabla}}$ with respect to the modified connection $\widetilde{\nabla}$ satisfies
  \[R^{\widetilde{\nabla}}(X, Y) = R_j(X,Y) + \frac{1}{4}\pi_j([X,Y]_{\g}) = 0,\]
  that is, the modified connection $\widetilde{\nabla}$ is a flat connection.
  One can see more detailed calculation in \cite[Lemma 1]{Bar96}.
\end{proof}

We give higher spin Killing spinors on $\sphere{3}$ explicitly.
It is well-known that Lie group $\sphere{3}$ carries an orthonormal frame of left-invariant Killing vector fields $\{\xi_1, \xi_2, \xi_3\}$ satisfying
\[\nabla_{\xi_1} \xi_2 = -\nabla_{\xi_2} \xi_1 = \xi_3, \quad  \nabla_{\xi_2} \xi_3 = -\nabla_{\xi_3} \xi_2 = \xi_1, \quad \nabla_{\xi_3} \xi_1 = -\nabla_{\xi_1} \xi_3 = \xi_2.\]
Hence for all left-invariant vector fields $\xi$ and for all vector fields $X$, we have
\[\nabla_X \xi = \ast(X \wedge \xi) = \frac{1}{2}[X,\xi]_{\g},\]
where the second equality follows from \eqref{eq:g-bracket}.
For left-invariant vector fields $\xi$ and spin $(j+\frac{1}{2})$ Killing spinor $\varphi \in \Gamma(S_j)$ with $\mu = \frac{1}{2}$,
\[\psi := \pi^+_j(\xi)\varphi \quad \in \Gamma(S_{j+1})\]
is a spin $(j + \frac{3}{2})$ Killing spinor with $\mu = \frac{1}{2}$.
Indeed, $\psi$ satisfies
\begin{align*}
  \nabla_X \psi &= \pi^+_j(\nabla_X \xi)\varphi + \frac{1}{2}\pi^+_j(\xi)\pi_j(X)\varphi\\
  &= \frac{1}{2}\pi^+_j([X,\xi]_{\g})\varphi + \frac{1}{2}\pi_{j+1}(X)\pi^+_j(\xi)\varphi - \frac{1}{2}\pi^+_j([X,\xi]_{\g})\varphi\\
  &= \frac{1}{2}\pi_{j+1}(X)\psi.
\end{align*}
We need to check that $\psi$ is not identically zero.
By equation \eqref{eq:comm_rel_of_pipm}, we have
\begin{align*}
  \|\psi\|^2 &= - \left\langle\pi^-_{j+1}(\xi)\pi^+_j(\xi)\varphi, \varphi\right\rangle\\
  &= \frac{1}{(2j+3)^2}\left\langle\pi_j(\xi)\pi_j(\xi)\varphi, \varphi\right\rangle + \left\langle\xi, \xi\right\rangle \left\langle\varphi, \varphi\right\rangle\\
  &= -\frac{1}{(2j+3)^2}\|\pi_j(\xi)\varphi\|^2 + \|\xi\|^2\|\varphi\|^2\\
  & \geq -\frac{(2j+1)(2j+3)}{(2j+3)^2}\|\xi\|^2\|\varphi\|^2 + \|\xi\|^2\|\varphi\|^2\\
  &= \frac{2}{2j+3}\|\xi\|^2\|\varphi\|^2 > 0,
\end{align*}
where the inequality follows from the following inequality:
\[\|\pi_j(\xi)\varphi\|^2 = \left\|\sqrt{(2j+1)(2j+3)}\Pi_j(\varphi \otimes \xi)\right\|^2 \leq (2j+1)(2j+3)\|\xi\|^2\|\varphi\|^2.\]
Similarly, for right-invariant vector fields $\xi$ and spin $(j+\frac{1}{2})$ Killing spinor $\varphi \in \Gamma(S_j)$ with $\mu = -\frac{1}{2}$, $\psi := \pi^+_j(\xi)\varphi$ is a spin $(j + \frac{3}{2})$ Killing spinor with $\mu = -\frac{1}{2}$.
Now we have following theorem.
\begin{theo}
  \label{theo:Killing_spinor_on_S3}
  On $\sphere{3}$, for any non-zero left-invariant (resp. right-invariant) vector fields $\xi$ and any spin $(j+\frac{1}{2})$ Killing spinors $\varphi \in \Gamma(S_j)$ with Killing number $\mu = \frac{1}{2}$ (resp. $\mu = -\frac{1}{2}$), $\pi_j^+(\xi)\varphi \in \Gamma(S_{j+1})$ is a spin $(j + \frac{3}{2})$ Killing spinor with Killing number $\mu = \frac{1}{2}$ (resp. $\mu = -\frac{1}{2}$).
  Moreover, all Killing spinors on $\sphere{3}$ are obtained inductively by this construction from lower spin Killing spinors.
\end{theo}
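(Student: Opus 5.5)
The first assertion, for left-invariant $\xi$ and $\mu=\tfrac{1}{2}$, is the computation carried out just before the statement. It uses $\nabla_X\xi=\tfrac12[X,\xi]_{\g}$ together with the first identity of Lemma \ref{lemm:alg_twisted_Weitzenbock}, which gives
\[\pi_j^+(\xi)\pi_j(X)=\pi_{j+1}(X)\pi^+_j(\xi)-\tfrac{2j+3}{2}\pi_j^+([X,\xi]_{\g}).\]
The non-vanishing estimate $\|\psi\|^2\geq\tfrac{2}{2j+3}\|\xi\|^2\|\varphi\|^2$ then shows that $\psi$ is non-trivial.

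For the right-invariant case I would first check that an orthonormal frame of right-invariant fields satisfies the same relations with the opposite orientation, so that $\nabla_X\xi=-\ast(X\wedge\xi)=-\tfrac12[X,\xi]_{\g}$. With $\nabla_X\varphi=-\tfrac12\pi_j(X)\varphi$ this gives
\[\nabla_X\psi=-\tfrac12\pi_j^+([X,\xi]_{\g})\varphi-\tfrac12\pi_j^+(\xi)\pi_j(X)\varphi .\]
Applying Lemma \ref{lemm:alg_twisted_Weitzenbock} again, the bracket terms cancel, leaving $\nabla_X\psi=-\tfrac12\pi_{j+1}(X)\psi$. The same norm estimate applies, so $\psi\neq0$.

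For the \emph{moreover} part I would argue by a dimension and evaluation argument. Let $\mathcal K_j^{\pm}$ denote the space of spin $(j+\tfrac12)$ Killing spinors with $\mu=\pm\tfrac12$, and $\mathfrak l$ the space of left-invariant fields (resp.\ $\mathfrak r$, right-invariant). Extend complex-linearly to $\mathfrak l\otimes\C$, which is harmless since $\pi_j^+$ is complex linear in its argument. The construction then defines a linear map $\Phi\colon(\mathfrak l\otimes\C)\otimes\mathcal K_j^+\to\mathcal K_{j+1}^+$, $\xi\otimes\varphi\mapsto\pi_j^+(\xi)\varphi$. The goal is to show that $\Phi$ is surjective.

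Fix $x\in\sphere{3}$. By Proposition \ref{prop:triv_Sj_on_S3}, evaluation $\mathrm{ev}_x\colon\mathcal K_j^\pm\to(S_j)_x$ is an isomorphism for every $j$, since Killing spinors are parallel for a flat connection and trivialize $S_j$. Evaluation $\mathfrak l\otimes\C\to T_xM^{\C}$ is also an isomorphism. Hence $\mathrm{ev}_x\circ\Phi$ is identified with the map $(S_j)_x\otimes T_xM^{\C}\to(S_{j+1})_x$, $\varphi\otimes X\mapsto\pi^+_j(X)\varphi$. This is a non-zero multiple of the orthogonal projection $\Pi^+_j$, hence surjective. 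Since $\mathrm{ev}_x$ is injective on $\mathcal K_{j+1}^+$, the map $\Phi$ is surjective. So every spin $(j+\tfrac32)$ Killing spinor is a sum $\sum_i\pi_j^+(\xi_i)\varphi_i$, and each such term is of the stated form.

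Inducting from $j=0$, where the $\mathcal K_0^\pm$ are the usual Killing spinors on $\sphere{3}$, yields all higher spin Killing spinors. The case $\mu=-\tfrac12$ with right-invariant fields is identical. The main obstacle I expect is the sign and orientation bookkeeping for right-invariant fields, namely confirming $\nabla_X\xi=-\tfrac12[X,\xi]_{\g}$ under the paper's conventions \eqref{eq:g-bracket}. Everything else follows from Lemma \ref{lemm:alg_twisted_Weitzenbock} and Proposition \ref{prop:triv_Sj_on_S3}.
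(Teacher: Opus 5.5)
There is a concrete error in your first part. The identity you display,
\[\pi_j^+(\xi)\pi_j(X)=\pi_{j+1}(X)\pi^+_j(\xi)-\tfrac{2j+3}{2}\pi_j^+([X,\xi]_{\g}),\]
is not what Lemma~\ref{lemm:alg_twisted_Weitzenbock} says, and it is false. With $(e_k,e_l)=(X,\xi)$ the lemma involves $\pi_j^+(X)\pi_j(\xi)$, not $\pi_j^+(\xi)\pi_j(X)$. If you substitute your formula, the bracket terms do not cancel. In the left-invariant case you get $\nabla_X\psi=\tfrac12\pi_{j+1}(X)\psi-\tfrac{2j+1}{4}\pi_j^+([X,\xi]_{\g})\varphi$, and in the right-invariant case a residue $+\tfrac{2j+1}{4}\pi_j^+([X,\xi]_{\g})\varphi$ remains.

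The identity you actually need is the infinitesimal $\SU(2)$-equivariance of $\varphi\otimes\xi\mapsto\pi_j^+(\xi)\varphi$, with $\su(2)$ acting on $\R^3$ by $[\cdot,\cdot]_{\g}$. It is also the one the paper's displayed computation uses implicitly:
\[\pi_{j+1}(X)\pi_j^+(\xi)-\pi_j^+(\xi)\pi_j(X)=\pi_j^+([X,\xi]_{\g}),\]
with coefficient $1$. With this replacement both of your computations close. In the right-invariant case your sign $\nabla_X\xi=-\tfrac12[X,\xi]_{\g}$ is correct.

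Your argument for the \emph{moreover} part is correct and takes a different route from the paper. The paper treats $\Phi_j^{\pm}\colon K_2^{\pm}\otimes K_{2j+1}^{\pm}\to K_{2j+3}^{\pm}$ as a map of $G=\SU(2)\times\SU(2)$-modules. The map is equivariant, it is non-zero by the first part and the norm estimate, and its target is irreducible by Lemma~\ref{lemm:irrep_and_higherspin_killing}. Schur's lemma then gives surjectivity.

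You instead evaluate at a point. Killing spinors and invariant fields trivialize $S_j$, $S_{j+1}$ and $TM$, so $\mathrm{ev}_x\circ\Phi$ becomes a non-zero multiple of $\Pi_j^+$, which is onto. Injectivity of $\mathrm{ev}_x$ on the $\widetilde{\nabla}$-parallel sections $\mathcal K_{j+1}^{\pm}$ then finishes the argument.

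What each route buys:
\begin{itemize}
\item Your route is more elementary: it needs no homogeneous model, no irreducibility, and no norm estimate. Run inductively from the classical case $j=0$, it even reproves that the spin $(j+\tfrac12)$ Killing spinors span every fiber. That recovers the dimension count of Proposition~\ref{prop:triv_Sj_on_S3} without the curvature computation.
\item The paper's route identifies the construction with the top Clebsch--Gordan projection of $G$-modules. That is the viewpoint reused in \S\ref{sec:Killing_tensor_on_S3}.
\end{itemize}
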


In order to prove this theorem, we need some preparations.
It is well-known that $G := \SU(2) \times \SU(2)$ acts on $\sphere{3} \cong \SU(2)$ transitively by 
  \[(g_1, g_2) \cdot p = g_1 p g_2^{-1} \quad\quad (g_1,g_2) \in G, \,\,p \in \sphere{3} = \SU(2),\]
  and the isotropy subgroup at the identity $e \in \sphere{3}$ is the diagonal subgroup $H := \{(g,g) \mid g \in \SU(2)\} \cong \SU(2)$. 
  Thus, we have $\sphere{3} \cong G / H$.
  By this expression, the projection $G \to G / H$ can be seen as the principal $\SU(2)$-bundle over $\sphere{3}$, which is the spin structure on $\sphere{3}$.
  So the spin $(j + \frac{1}{2})$ spinor bundle $S_j$ over $\sphere{3}$ is given by the associated bundle $S_j = G \times_{H} W_j$, and the space of sections $\Gamma(S_j)$ is identified with the space of $H$-equivariant smooth functions
  \begin{align*}
    \Gamma(S_j) &\cong C^{\infty}(G, W_j)^{H}\\
    & = \{\varphi \in C^{\infty}(G, W_j) \mid \varphi(g_1h,g_2h) = \pi_j(h^{-1})\varphi(g_1,g_2),\quad \forall (g_1,g_2) \in G, \forall h \in \SU(2)\}.
  \end{align*}
  Hence, $\Gamma(S_j)$ can be seen as a representation space of $G$ by
  \[((g_1', g_2') \cdot \varphi)(g_1, g_2) := \varphi((g_1')^{-1}g_1, (g_2')^{-1}g_2) \quad \forall (g_1', g_2'),  \forall (g_1,g_2) \in G.\]
  Now we set two subgroups of $G=\SU(2) \times \SU(2)$:
\[G_L := \SU(2) \times \{e\}, \quad G_R := \{e\} \times \SU(2).\]
For any element $\psi \in W_j$, we define a map $\psi^L \colon G \to W_j$ as $G_L$-invariant and $H$-equivariant.
That is, 
\[\psi^L(g_1,g_2) = \psi^L(g_2,g_2) = \pi_j(g_2^{-1})\psi^L(e,e) = \pi_j(g_2^{-1})\psi.\]
Since $\psi^L$ is $H$-equivariant, it defines a section of $S_j$.
We denote this section by the same symbol $\psi^L \in \Gamma(S_j)$.
And we can define $\psi^R \in \Gamma(S_j)$ similarly as $G_R$-invariant and $H$-equivariant:
\[\psi^R(g_1,g_2) = \psi^R(g_1,g_1) = \pi_j(g_1^{-1})\psi^R(e,e) = \pi_j(g_1^{-1})\psi.\]
We set two subspaces of $\Gamma(S_j)$:
\[K_{2j+1}^{+} := \{\psi^L \in \Gamma(S_j) \mid \psi \in W_j\}, \quad K_{2j+1}^{-} := \{\psi^R \in \Gamma(S_j) \mid \psi \in W_j\}.\]
By the definition of these spaces, $K_{2j+1}^{+}$ is isomorphic to an irreducible representation of $G$ with highest weight $(0, 2j+1)$.
Indeed, $\C \boxtimes W_j \ni \psi \mapsto \psi^L \in K_{2j+1}^{+}$ is a $G$-equivariant isomorphism.
Similarly, $K_{2j+1}^{-}$ is isomorphic to an irreducible representation of $G$ with highest weight $(2j+1, 0)$.
\begin{lemm}
  \label{lemm:irrep_and_higherspin_killing}
  For any $\psi \in W_j$, $\psi^L, \psi^R \in \Gamma(S_j)$ satisfy
  \[\nabla_X \psi^L = \frac{1}{2}\pi_j(X)\psi^L, \quad \nabla_X \psi^R = -\frac{1}{2}\pi_j(X)\psi^R \quad (\forall X \in \Gamma(T\sphere{3})).\]
  Therefore, $K_{2j+1}^{+}$ (resp. $K_{2j+1}^{-}$) is the space of spin $(j + \frac{1}{2})$ Killing spinors with Killing number $\mu = \frac{1}{2}$ (resp. $\mu = -\frac{1}{2}$).
\end{lemm}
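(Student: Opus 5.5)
The plan is to compute $\nabla_X\psi^L$ directly in the homogeneous description $S_j = G\times_H W_j$. The covariant derivative comes from the Levi-Civita (spin) connection on $G\to G/H$, so the first task is to identify that connection as a connection form on $G$.

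\emph{Step 1: the reductive splitting.} Take the reductive complement
\[
\m=\{(X,-X)\mid X\in\su(2)\}\subset\g=\su(2)\oplus\su(2)
\]
of $\h=\{(X,X)\}$. For a symmetric space $G/H$ such as the round $\sphere{3}$, the Levi-Civita connection coincides with the canonical connection, whose horizontal distribution is the left translate of $\m$. Under this connection, for a section $\varphi\in C^\infty(G,W_j)^H$ and $X\in T_{o}\sphere{3}$ corresponding to $\tilde X\in\m$, the covariant derivative at the base point is $(\nabla_X\varphi)(e,e)=\tilde X\varphi$, the derivative along the left-invariant vector field $\tilde X$. By $G$-equivariance of every object involved, it suffices to check the identity at the base point.

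\emph{Step 2: differentiating $\psi^L$.} Since $\psi^L(g_1,g_2)=\pi_j(g_2^{-1})\psi$, differentiating along $\exp t(X,-X)$ gives
\[
\frac{d}{dt}\Big|_{t=0}\pi_j(\exp(tX))\psi=\pi_j(X)\psi .
\]
Similarly, $\psi^R(g_1,g_2)=\pi_j(g_1^{-1})\psi$ gives $-\pi_j(X)\psi$.

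\emph{Step 3: normalizations.} This is the step I expect to be the main obstacle. One has to match $\tilde X$ with a unit tangent vector. Via $\sphere{3}\cong\SU(2)$ with the metric of curvature $1$ and the Pauli identification of Remark~\ref{rema:identification_su2_and_R3}, $(X,-X)$ projects to the tangent vector $2X$ at $e$, so that $\sigma_i$ has length $2$. A unit vector $e_i$ then corresponds to $\tfrac12(\sigma_i,-\sigma_i)$, which yields
\[
\nabla_{e_i}\psi^L=\tfrac12\pi_j(e_i)\psi^L, \qquad \nabla_{e_i}\psi^R=-\tfrac12\pi_j(e_i)\psi^R .
\]
I would cross-check both the overall factor and the sign against the known values $\mu=\pm\tfrac12$ from Theorem~\ref{theo:3dim_Killing_and_Einstein}, using $\scal=6$. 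The sign convention for $\m$ (whether $(X,-X)$ or $(-X,X)$, i.e. the orientation) must also agree with the convention that identifies $K^+$ with $\mu=+\tfrac12$; if it does not, I would swap $L$ and $R$ accordingly.

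\emph{Step 4: conclusion.} By Steps 2 and 3, each $\psi^L$ is a Killing spinor with $\mu=\tfrac12$ and each $\psi^R$ is one with $\mu=-\tfrac12$. The maps $\psi\mapsto\psi^L$ and $\psi\mapsto\psi^R$ are injective, so $\dim K^{\pm}_{2j+1}=2j+2$. By Proposition~\ref{prop:triv_Sj_on_S3}, the space of Killing spinors with $\mu=\pm\tfrac12$ also has dimension exactly $2j+2$, so these inclusions are equalities.
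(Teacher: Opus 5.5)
Your proposal is correct and is essentially the paper's proof. Both identify $\nabla$ with the canonical connection of the symmetric pair $(G,H)$ with complement $\m=\{(X,-X)\}$, differentiate $\psi^L(g_1,g_2)=\pi_j(g_2^{-1})\psi$ along the horizontal direction, pick up the factor $\tfrac12$ from the identification $(X,-X)\mapsto 2X$, and finish with a dimension count. The paper computes at an arbitrary point $(g_1,g_2)$, whereas you use the $G$-stability of $K^{\pm}_{2j+1}$ to reduce to the base point; this is only a cosmetic difference.

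There is one wording slip in Step 3. It is the tangent vector $2\sigma_i$ coming from $(\sigma_i,-\sigma_i)$ that has length $2$, because $\sigma_i$ itself is a unit vector for the curvature-one metric. Your conclusion $e_i\leftrightarrow\tfrac12(\sigma_i,-\sigma_i)$ is the right one.
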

\begin{proof}
  We prove only the first equation.
  It is sufficient to show that the equation holds for left-invariant vector fields $X$ on $\sphere{3}$, since both sides are tensorial in $X$.
  Let $\g = \su(2) \oplus \su(2)$ be the Lie algebra of $G$, and $\h = \{(X,X) \mid X \in \su(2)\}$ the Lie algebra of $H$.
  We set $\m := \{(X,-X) \mid X \in \su(2)\}$, which is identified with the tangent space $T_{[e]} \sphere{3}$ at the identity coset $[e] \in G/H$.
  For $X \in \m$, we denote by $\tilde{X}$ the left-invariant vector field on $\sphere{3}$ defined by $X$.
  On the other hand, since $\sphere{3} \cong \SU(2)$, $T_{[e]}\sphere{3}$ can be identified with $\su(2)$ by the map $(X,-X) \mapsto 2X$.
  Thus, for a left-invariant vector field $\tilde{X} \in \Gamma(\sphere{3})$, we have
  \begin{align*}
  (\nabla_{\tilde{X}} \psi^L)_{g_1g_2^{-1}} &= \frac{d}{dt}\Big|_{t=0} \psi^L(g_1\exp(t\tfrac{X}{2}), g_2\exp(-t\tfrac{X}{2}))\\
  &= \frac{d}{dt}\Big|_{t=0} \pi_j(\exp(t\tfrac{X}{2})g_2^{-1})\psi^L(e,e)\\
  &= \frac{d}{dt}\Big|_{t=0} \pi_j(\exp(t\tfrac{X}{2}))\psi^L(g_1,g_2)\\
  &= \frac{1}{2}\pi_j(\tilde{X})\psi^L_{g_1g_2^{-1}}.
  \end{align*}
  The other equation can be shown similarly.
  Therefore, $\psi^L$ (resp. $\psi^R$) is a spin $(j + \frac{1}{2})$ Killing spinor with Killing number $\mu = \frac{1}{2}$ (resp. $\mu = -\frac{1}{2}$).
  By comparing the dimensions, we obtain the last statement.
\end{proof}

\begin{proof}[Proof of Theorem \ref{theo:Killing_spinor_on_S3}]
  We have already shown the first statement.
  It is sufficient to show the second statement.
  We write $K_2^+ \subset \Gamma(TM^{\C})$ for the irreducible component of $G$ with highest weight $(0,2)$.
  Similarly, we write $K_2^- \subset \Gamma(TM^{\C})$ for the irreducible component of $G$ with highest weight $(2,0)$.
  By the same argument as Lemma \ref{lemm:irrep_and_higherspin_killing}, $K_2^+$ (resp. $K_2^-$) is the space of left-invariant (resp. right-invariant) vector fields on $\sphere{3}$ (see also \S\ref{sec:Killing_tensor_on_S3}).
  We consider the map
  \[\Phi_j^{\pm} \colon K_2^{\pm} \otimes K_{2j + 1}^{\pm} \to K_{2j + 3}^{\pm}, \quad \xi \otimes \varphi \mapsto \pi_j^+(\xi)\varphi.\]
  By the first statement of this theorem, this map is well-defined and non-zero.
  Moreover, it is easy to see that $\Phi_j^{\pm}$ is $G$-equivariant.
  Therefore $\Phi_j^{\pm}$ must be surjective by Schur's lemma. This implies that all higher spin Killing spinors on $\sphere{3}$ are obtained from lower spin Killing spinors.
\end{proof}

Let us give an explicit formula of higher spin Killing spinors on $\sphere{3}$.
We trivialize the spin structure $G \to \sphere{3}$ by a global section $\sphere{3} \ni p \mapsto (p, e) \in G$.
The spin $(j + \frac{1}{2})$ spinor bundle $S_j$ on $\sphere{3}$ is trivialized as
\[S_j = G \times_H W_j \xrightarrow{\sim} \sphere{3} \times W_j, \quad [(g_1, g_2), \varphi] \mapsto (g_1g_2^{-1}, \pi_j(g_2)\varphi).\]
This trivialization is the same as the trivialization by the spin $(j + \frac{1}{2})$ Killing spinors with $\mu = \frac{1}{2}$ (Proposition \ref{prop:triv_Sj_on_S3}).
Indeed, under this trivialization, $\psi^L \in \Gamma(S_j)$ is expressed as
\[[(g_1, g_2), \psi^L(g_1, g_2)] \mapsto (g_1g_2^{-1}, \pi_j(g_2)\psi^L(g_1, g_2)) = (g_1g_2^{-1}, \psi).\]
Namely, the spin $(j + \frac{1}{2})$ Killing spinors with $\mu = \frac{1}{2}$ are the constant sections of $\sphere{3} \times W_j$.
On the other hand, $\psi^R$ is expressed as
\[[(g_1, g_2), \psi^R(g_1, g_2)] \mapsto (g_1g_2^{-1}, \pi_j(g_2)\psi^R(g_1, g_2)) = (g_1g_2^{-1}, \pi_j(g_2g_1^{-1})\psi).\]
Therefore, if we write $g_1g_2^{-1} = x \in \sphere{3}$, the spin $(j + \frac{1}{2})$ Killing spinor with Killing number $\mu = -\frac{1}{2}$ is expressed as $x \mapsto \pi_j(x^{-1})\psi$ for some $\psi \in W_j$.
Summarizing this argument, we obtain the following proposition.
\begin{prop}
  Under the above trivialization of $S_j$ on $\sphere{3}$, the spin $(j + \frac{1}{2})$ Killing spinors are given as follows:
  \begin{enumerate}
    \item The spin $(j + \frac{1}{2})$ Killing spinor with Killing number $\mu = \frac{1}{2}$ is the constant section $x \mapsto \psi$, where $\psi \in W_j$.
    \item The spin $(j + \frac{1}{2})$ Killing spinor with Killing number $\mu = -\frac{1}{2}$ is $x \mapsto \pi_j(x^{-1})\psi$, where $\psi \in W_j$.
  \end{enumerate}
\end{prop}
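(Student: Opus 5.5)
The plan is to read off both descriptions directly from Lemma \ref{lemm:irrep_and_higherspin_killing} and the explicit trivialization. That lemma identifies the full spaces of spin $(j+\frac{1}{2})$ Killing spinors with Killing number $\mu=\frac12$ and $\mu=-\frac12$ as $K^{+}_{2j+1}=\{\psi^L\}$ and $K^{-}_{2j+1}=\{\psi^R\}$, with $\psi\in W_j$. Its dimension count uses Proposition \ref{prop:triv_Sj_on_S3} together with the bound $\dim\le\rank S_j=2j+2$, and the map $\psi\mapsto\psi^L$ is injective since $\psi^L(e,e)=\psi$. So it only remains to compute the image of $\psi^L$ and $\psi^R$ under the bundle isomorphism
\[G\times_H W_j\to \sphere{3}\times W_j,\quad [(g_1,g_2),\varphi]\mapsto (g_1g_2^{-1},\pi_j(g_2)\varphi).\]

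First I will check that this map is well defined. Replacing $(g_1,g_2),\varphi$ by $(g_1h,g_2h),\pi_j(h^{-1})\varphi$ gives the same image point, because $g_1h h^{-1}g_2^{-1}=g_1g_2^{-1}$ and $\pi_j(g_2h)\pi_j(h^{-1})\varphi=\pi_j(g_2)\varphi$. It is a fiberwise linear isomorphism covering the identity, with inverse $(x,v)\mapsto[(x,e),v]$.

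Next I will substitute the formulas for the two families of sections. Since $\psi^L(g_1,g_2)=\pi_j(g_2^{-1})\psi$, the section corresponds to $x=g_1g_2^{-1}\mapsto\pi_j(g_2)\pi_j(g_2^{-1})\psi=\psi$, which is constant. This proves (1). Similarly, $\psi^R(g_1,g_2)=\pi_j(g_1^{-1})\psi$ corresponds to $\pi_j(g_2g_1^{-1})\psi=\pi_j(x^{-1})\psi$, which proves (2). Here $x^{-1}\in\SU(2)$ acts through the group representation $\pi_j$, not through the Lie algebra action; this is the only notational point to be careful about.

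Finally, since the maps $\psi\mapsto\psi^L$ and $\psi\mapsto\psi^R$ exhaust the respective Killing spinor spaces, every Killing spinor has the stated form, and conversely every section of that form is a Killing spinor with the stated Killing number. No real obstacle is expected. The one step needing care is to confirm that Lemma \ref{lemm:irrep_and_higherspin_killing} really gives equality of spaces and not just inclusion, which is the dimension comparison described above.
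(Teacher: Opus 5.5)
Your proposal is correct and follows the paper's argument. You take the Killing spinor spaces to be $K^{+}_{2j+1}=\{\psi^L\}$ and $K^{-}_{2j+1}=\{\psi^R\}$ from Lemma \ref{lemm:irrep_and_higherspin_killing}, and push $\psi^L(g_1,g_2)=\pi_j(g_2^{-1})\psi$ and $\psi^R(g_1,g_2)=\pi_j(g_1^{-1})\psi$ through the trivialization, which gives $\psi$ and $\pi_j(g_2g_1^{-1})\psi=\pi_j(x^{-1})\psi$; your checks that the trivialization is well defined and that the lemma gives equality of spaces are correct details the paper leaves implicit.
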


\subsection{Higher spin Killing spinors on $\hb{3}$}
\label{sec:higher_killing_on_H3}
Since the scalar curvature of $\hb{3}$ is $\scal = -6$, higher spin Killing spinors on $\hb{3}$ are of Killing number $\mu = \pm i/2$ by Theorem \ref{theo:3dim_Killing_and_Einstein}.
In the spin $\frac{1}{2}$ case, explicit formulas of Killing spinors on $\hb{3}$ are obtained by Y. Fujii and K. Yamagishi in \cite{FujiiYamagishi86}. 
We generalize their result to higher spin cases.
We use the upper half-space model of $\hb{3}$:
\[\hb{3} = \{(x^1,x^2,x^3) \in \R^3 \mid x^1 > 0\}, \quad g = \frac{1}{(x^1)^2}((dx^1)^2 + (dx^2)^2 + (dx^3)^2).\]
An orthonormal frame $\{e_1, e_2, e_3\}$ on $\hb{3}$ is given by
\[e_1 = x^1 \frac{\partial}{\partial x^1}, \quad e_2 = x^1 \frac{\partial}{\partial x^2}, \quad e_3 = x^1 \frac{\partial}{\partial x^3}.\]
The Levi-Civita connection $\nabla$ on $\hb{3}$ is expressed as
\begin{align}
  \nabla_{e_1} e_1 &= 0, & \nabla_{e_1} e_2 &= -e_2, & \nabla_{e_1} e_3 &= -e_3, \notag\\
  \nabla_{e_2} e_1 &= -e_2, & \nabla_{e_2} e_2 &= e_1, & \nabla_{e_2} e_3 &= 0, \label{eq:LCconn_on_H3}\\
  \nabla_{e_3} e_1 &= -e_3, & \nabla_{e_3} e_2 &= 0, & \nabla_{e_3} e_3 &= e_1. \notag
\end{align}
\begin{lemm}
  \label{lemm:conn_on_Sk_on_H3}
  The induced connection $\nabla$ on the spin $(j + \frac{1}{2})$ spinor bundle $S_j$ on $\hb{3}$ is expressed as
  \[\nabla_{e_1} = e_1, \quad \nabla_{e_2} = e_2 - \frac{1}{2}\pi_j(e_3), \quad \nabla_{e_3} = e_3 + \frac{1}{2}\pi_j(e_2).\]
\end{lemm}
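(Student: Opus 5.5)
The plan is to use the general local formula for the spin connection, now written through $\pi_j$. In the local trivialization of $S_j$ given by the orthonormal frame $\{e_1,e_2,e_3\}$, the induced connection on $S_j$ is
\[\nabla_X = X + \frac{1}{2}\sum_{k<l} g(\nabla_X e_k, e_l)\,\pi_j(e_k\wedge e_l).\]
Here $\pi_j(e_k\wedge e_l)$ denotes the action of $\so(3)\cong\su(2)$ on $W_j$. This is the same expression the paper used for the cone, specialized to $M$. By \eqref{eq:isom_su2_and_so3} we have $e_2\wedge e_3 \leftrightarrow \tfrac12\sigma_1$, $e_3\wedge e_1\leftrightarrow \tfrac12\sigma_2$ and $e_1\wedge e_2\leftrightarrow \tfrac12\sigma_3$. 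Hence $\pi_j(e_k\wedge e_l)=\tfrac12\pi_j(e_m)$ for every cyclic triple $(k,l,m)$. The overall sign convention should be checked against the cone computation, where $\pi_j(X_2\wedge X_3)=\tfrac12\pi_j(X_1)$ was used. With that convention we expect
\[\nabla_X = X + \tfrac14\sum_{m}\big(\text{cyclic connection coefficient}\big)\,\pi_j(e_m).\]

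Next, substitute the Levi-Civita data \eqref{eq:LCconn_on_H3}.

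For $X=e_1$: we have $\nabla_{e_1}e_2=-e_2$ and $\nabla_{e_1}e_3=-e_3$. Every coefficient $g(\nabla_{e_1}e_k,e_l)$ with $k\neq l$ vanishes, so $\nabla_{e_1}=e_1$.

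For $X=e_2$: the only nonzero off-diagonal coefficients are $g(\nabla_{e_2}e_1,e_2)=-1$ and $g(\nabla_{e_2}e_2,e_1)=1$. In the sum over ordered pairs $(k,l)$ these give $\tfrac12\big(-\pi_j(e_1\wedge e_2)+\pi_j(e_2\wedge e_1)\big)=-\pi_j(e_1\wedge e_2)=-\tfrac12\pi_j(e_3)$.

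For $X=e_3$: the nonzero coefficients are $g(\nabla_{e_3}e_1,e_3)=-1$ and $g(\nabla_{e_3}e_3,e_1)=1$. These give $-\pi_j(e_1\wedge e_3)=\pi_j(e_3\wedge e_1)=\tfrac12\pi_j(e_2)$.

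These three results match the claimed formulas.

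The main obstacle is bookkeeping of conventions: the ordering $(k,l)$ in $g(\nabla_X e_k,e_l)$, the factor $\tfrac12$ against a sum over all ordered pairs, and the sign of $\pi_j(e_k\wedge e_l)$. I would fix these by cross-checking against the cone derivation, which in the paper's conventions produced $\nabla_{X_1}-\pi_j(X_2\wedge X_3)=\nabla_{X_1}-\tfrac12\pi_j(X_1)$.

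As an independent sanity check, I would compute the curvature from the resulting connection. One gets $R_j(e_2,e_3)=[\nabla_{e_2},\nabla_{e_3}]-\nabla_{[e_2,e_3]}$ with $[e_2,e_3]=0$. This curvature should equal $+\tfrac14\pi_j([e_2,e_3]_{\g})=\tfrac12\pi_j(e_1)$, which is the negative of the sphere formula, as expected for curvature $-1$. The computation uses $[\pi_j(e_3),\pi_j(e_2)]=-2\pi_j(e_1)$, and agreement fixes the sign unambiguously.
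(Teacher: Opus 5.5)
Your approach is the same as the paper's. You write the induced connection in the frame $\{e_1,e_2,e_3\}$ through the coefficients $g(\nabla_X e_k,e_l)$ and $\pi_j(e_k\wedge e_l)$, convert with \eqref{eq:isom_su2_and_so3}, and read off the three operators. Your explicit evaluations for $e_1,e_2,e_3$ are correct, but two points in the write-up need fixing.

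\textbf{The displayed formula is off by a factor of $2$.} You wrote $\nabla_X = X+\frac12\sum_{k<l}g(\nabla_Xe_k,e_l)\pi_j(e_k\wedge e_l)$. The correct normalization, which is the paper's, is $\sum_{k<l}$ with no $\frac12$. Equivalently it is $\frac12\sum_{k,l}$ over all ordered pairs, which is what the cone formula actually says; you misquoted it. Taken literally, your formula and the ensuing ``$\frac14\sum_m$'' heuristic give $\nabla_{e_2}=e_2-\frac14\pi_j(e_3)$. Your computations for $e_2$ and $e_3$ silently switch to the ordered-pair sum with $\frac12$, which is why they reach the right answer. Correct the general formula and the heuristic to match, i.e.\ $\nabla_X = X+\frac12\sum_m(\text{cyclic coefficient})\,\pi_j(e_m)$.

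\textbf{The curvature check at $(e_2,e_3)$ cannot fix the sign.} In this trivialization $[e_2,e_3]=0$, and the connection matrices $\omega(e_2)=-\frac12\pi_j(e_3)$ and $\omega(e_3)=\frac12\pi_j(e_2)$ are constant. Hence $R_j(e_2,e_3)=[\omega(e_2),\omega(e_3)]$, which is invariant under $\omega\mapsto-\omega$. This check only confirms the magnitude, so it would catch the factor-$2$ slip above: halved coefficients give $\frac18\pi_j(e_1)$ instead of $\frac12\pi_j(e_1)$. To pin down the sign, use $R_j(e_1,e_2)=[\nabla_{e_1},\nabla_{e_2}]-\nabla_{[e_1,e_2]}$ with $[e_1,e_2]=e_2$. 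There the term $-\nabla_{e_2}$ enters linearly and gives $R_j(e_1,e_2)=\frac12\pi_j(e_3)=\frac14\pi_j([e_1,e_2]_{\g})$, as required for curvature $-1$. The opposite sign convention would give $-\frac12\pi_j(e_3)$.

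Incidentally, the quoted values $\nabla_{e_1}e_2=-e_2$ and $\nabla_{e_1}e_3=-e_3$ from \eqref{eq:LCconn_on_H3} cannot be right. Metric compatibility forces $g(\nabla_{e_1}e_2,e_2)=0$, and torsion-freeness with $[e_1,e_2]=e_2$ and $\nabla_{e_2}e_1=-e_2$ gives $\nabla_{e_1}e_2=0$. As you note, only off-diagonal coefficients enter, so $\nabla_{e_1}=e_1$ is unaffected.
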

\begin{proof}
  The induced connection on $S_j$ is given by
  \[\nabla_{X} = X + \sum_{1 \leq k < l \leq 3} g(\nabla_{X} e_k, e_l)\pi_j(e_k \wedge e_l).\]
  Thus, by \eqref{eq:LCconn_on_H3} and the isomorphism \eqref{eq:isom_su2_and_so3} between $\su(2)$ and $\so(3)$ , we have the desired expression.
\end{proof}

By this lemma, the Killing spinor equation with Killing number $\mu$ on $\hb{3}$ reduces to the following system of differential equations:
\begin{align*}
  \nabla_{e_1} \varphi &= e_1 \varphi = \mu \pi_j(e_1) \varphi,\\
  \nabla_{e_2} \varphi &= \left(e_2 - \frac{1}{2}\pi_j(e_3)\right)\varphi = \mu \pi_j(e_2) \varphi,\\
  \nabla_{e_3} \varphi &= \left(e_3 + \frac{1}{2}\pi_j(e_2)\right)\varphi = \mu \pi_j(e_3) \varphi.
\end{align*}
From Theorem \ref{theo:3dim_Killing_and_Einstein}, Killing number must be $\mu = \pm \frac{i}{2}$ on $\hb{3}$.
Thus, we should solve the following system of differential equations:
\begin{align}
  \frac{\partial \varphi}{\partial x^1} &= \pm \frac{i}{2x^1} \pi_j(e_1) \varphi, \notag\\
  \frac{\partial \varphi}{\partial x^2} &= \left(\pm \frac{i}{2x^1} \pi_j(e_2) + \frac{1}{2x^1}\pi_j(e_3)\right) \varphi, \label{eq:Killingeq_on_H3_1}\\
  \frac{\partial \varphi}{\partial x^3} &= \left(- \frac{1}{2x^1}\pi_j(e_2) \pm \frac{i}{2x^1} \pi_j(e_3)\right) \varphi. \notag
\end{align}
  Let $H, E, F \in \sl(2, \C)$ be the standard basis defined by
  \[H = \begin{pmatrix}1 & 0 \\ 0 & -1\end{pmatrix}, \quad E = \begin{pmatrix}0 & 1 \\ 0 & 0\end{pmatrix}, \quad F = \begin{pmatrix}0 & 0 \\ 1 & 0\end{pmatrix}.\]
  Since $H = -i\sigma_1, E = \frac{1}{2}(\sigma_2 - i\sigma_3), F = -\frac{1}{2}(\sigma_2 + i\sigma_3)$, the above differential equations \eqref{eq:Killingeq_on_H3_1} are rewritten as
  \begin{align*}
    \frac{\partial \varphi}{\partial x^1} &= -\frac{1}{2 x^1} \pi_j(H) \varphi, &\frac{\partial \varphi}{\partial x^2} &= \frac{i}{x^1}\pi_j(E)\varphi, &\frac{\partial \varphi}{\partial x^3} &= -\frac{1}{x^1}\pi_j(E)\varphi &\text{ for } \mu = \frac{i}{2},\\
    \frac{\partial \varphi}{\partial x^1} &= \frac{1}{2 x^1} \pi_j(H) \varphi, &\frac{\partial \varphi}{\partial x^2} &= \frac{i}{x^1}\pi_j(F)\varphi, &\frac{\partial \varphi}{\partial x^3} &= \frac{1}{x^1}\pi_j(F)\varphi &\text{ for } \mu = -\frac{i}{2}.
  \end{align*}
In order to solve these differential equations, we set $x = x^1, z = x^2 + i x^3$.
Then, the above differential equations are equivalent to
\begin{align}
  \frac{\partial \varphi}{\partial x} &= -\frac{1}{2 x} \pi_j(H) \varphi, &\frac{\partial \varphi}{\partial z} &= \frac{i}{x} \pi_j(E) \varphi, &\frac{\partial \varphi}{\partial \bar{z}} &= 0 &\text{ for } \mu = \frac{i}{2},\label{eq:killingeq_on_H3_2}\\
  \frac{\partial \varphi}{\partial x} &= \frac{1}{2 x} \pi_j(H) \varphi, &\frac{\partial \varphi}{\partial z} &= 0, &\frac{\partial \varphi}{\partial \bar{z}} &= \frac{i}{x} \pi_j(F) \varphi  &\text{ for } \mu = -\frac{i}{2}. \notag
\end{align}
Solving these differential equations, we have the following theorem.
\begin{theo}
  \label{theo:Killing_spinor_on_H3}
  The 3-dimensional hyperbolic space $\hb{3}$ admits spin $(j + \frac{1}{2})$ Killing spinors for $\mu = \pm \frac{i}{2}$. The dimension of the space of spin $(j + \frac{1}{2})$ Killing spinors is $2j + 2$ for each $\mu = \pm \frac{i}{2}$. Moreover, higher spin Killing spinors on $\hb{3}$ can be expressed explicitly.
  Thus, the dimension of the space of spin $(j + \tfrac{1}{2})$ twistor spinors is $2(2j + 2)$.
\end{theo}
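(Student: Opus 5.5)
We solve the system \eqref{eq:killingeq_on_H3_2} explicitly; we treat $\mu = \frac{i}{2}$ and the case $\mu=-\frac{i}{2}$ follows by exchanging $E \leftrightarrow F$, $H \leftrightarrow -H$ and $z \leftrightarrow \bar z$.

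\emph{The $x$-dependence.} The first equation $\partial_x \varphi = -\frac{1}{2x}\pi_j(H)\varphi$ has general solution
\[
\varphi = x^{-\frac12\pi_j(H)}\phi(z,\bar z),
\]
where $x^{-\frac12\pi_j(H)} = \exp(-\tfrac12 \log x\,\pi_j(H))$ acts diagonally on a weight basis.

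\emph{The $\bar z$- and $z$-dependence.} The equation $\partial_{\bar z}\varphi = 0$ forces $\phi$ to be holomorphic in $z$. Using $\mathrm{Ad}(x^{a H})E = x^{2a}E$, the $z$-equation becomes
\[
x^{-\frac12\pi_j(H)}\partial_z\phi = \frac{i}{x}\pi_j(E)\,x^{-\frac12\pi_j(H)}\phi,
\qquad\text{i.e.}\qquad
\partial_z\phi = \frac{i}{x}\,x^{\frac12\pi_j(H)}\pi_j(E)x^{-\frac12\pi_j(H)}\phi .
\]
Since $x^{\frac12\pi_j(H)}\pi_j(E)x^{-\frac12\pi_j(H)} = x\,\pi_j(E)$, the factors of $x$ cancel, leaving $\partial_z\phi = i\pi_j(E)\phi$ with no $x$-dependence, consistent with the first equation. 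Because $\pi_j(E)$ is nilpotent ($\pi_j(E)^{2j+2}=0$), this gives
\[
\phi = \exp(iz\,\pi_j(E))\psi = \sum_{k=0}^{2j+1}\frac{(iz)^k}{k!}\pi_j(E)^k\psi,\qquad \psi\in W_j \text{ constant}.
\]
Hence
\[
\varphi = x^{-\frac12\pi_j(H)}\exp(iz\,\pi_j(E))\psi ,
\]
which is polynomial in $z$ and, on each weight component, a power of $x$.

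\emph{Verification and the dimension count.} Substituting back, all three equations hold, by the same conjugation identity; this is the step needing the most care, mainly with signs and with the commutation of the $z$-derivative past the $x$-factor. The map $\psi\mapsto\varphi$ is injective, since evaluating at $x=1$, $z=0$ returns $\psi$. It therefore yields a $(2j+2)$-dimensional space of Killing spinors for each sign of $\mu$. This saturates the bound $\rank S_j = 2j+2$ of Proposition \ref{prop:dim_of_Killingspinors}.

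\emph{The twistor spinors.} Killing spinors with $\mu=\frac{i}{2}$ and with $\mu=-\frac{i}{2}$ are $D_j$-eigenspinors with the distinct eigenvalues $\mp(2j+3)\frac{i}{2}$, so the two spaces intersect trivially. Their sum is therefore $2(2j+2)$-dimensional, and by Proposition \ref{prop:space of the higher Killing spinor} it lies in $\ker T^+_j\cap\ker T^-_j$. Together with the upper bound \eqref{eq:dim_of_twistorspinors}, this gives equality.
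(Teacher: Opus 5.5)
Your proof is correct and uses the same strategy as the paper: start from the reduced system \eqref{eq:killingeq_on_H3_2}, integrate the $x$-equation, then determine the $z$-dependence. The difference is in the execution.

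The paper carries this out only for $j=1$, $\mu=\frac{i}{2}$. It back-substitutes component by component, starting from the lowest-weight component, in an explicit basis of $W_1$, and says the remaining cases are similar. You instead use the conjugation identity $x^{\frac12\pi_j(H)}\pi_j(E)\,x^{-\frac12\pi_j(H)}=x\,\pi_j(E)$, which follows from $[H,E]=2E$, together with the nilpotency of $\pi_j(E)$. This gives the closed formula $\varphi=x^{-\frac12\pi_j(H)}\exp(iz\,\pi_j(E))\psi$ uniformly in $j$. The case $\mu=-\frac{i}{2}$ follows from the triple $(-H,F,E)$ with $\bar z$ in place of $z$.

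For $j=1$ and $\psi$ the lowest-weight vector, your formula reproduces exactly the paper's $C_4$-column, and the other columns arise the same way. So your formula is the general version of the paper's computation. It also makes both necessity and sufficiency visible, and injectivity of $\psi\mapsto\varphi$ follows by evaluating at $(x,z)=(1,0)$.

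Your last paragraph also supplies the argument the paper compresses into a bare ``Thus'' for twistor spinors. The sum is direct because the $D_j$-eigenvalues $\mp(2j+3)\frac{i}{2}$ are distinct. The sum lies in $\ker T_j^+\cap\ker T_j^-$, and the upper bound \eqref{eq:dim_of_twistorspinors} then forces equality.
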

\begin{proof}
  We only show the case of spin $\tfrac{3}{2}$ and $\mu = \frac{i}{2}$.
  The other cases can be shown similarly.
  We take the standard basis of $W_1$ as
  \[\pi_1(H) = \begin{pmatrix}
    3 & 0 & 0 & 0\\
    0 & 1 & 0 & 0\\
    0 & 0 & -1 & 0\\
    0 & 0 & 0 & -3
  \end{pmatrix}, \quad \pi_1(E) = \begin{pmatrix}
    0 & 3 & 0 & 0\\
    0 & 0 & 4 & 0\\
    0 & 0 & 0 & 3\\
    0 & 0 & 0 & 0
  \end{pmatrix}.\]
  Let $\varphi = {}^t(\varphi_1, \varphi_2, \varphi_3, \varphi_4) \in \Gamma(S_1)$ be a spin $\frac{3}{2}$ spinor.
  From \eqref{eq:killingeq_on_H3_2}, each $\varphi_i$ is independent of $\bar{z}$ and we solve the following system of differential equations:
  \begin{align}
    \frac{\partial \varphi_1}{\partial x} &= -\frac{3}{2x}\varphi_1, & \frac{\partial \varphi_1}{\partial z} &= \frac{3i}{x}\varphi_2, \label{eq:1}\\
    \frac{\partial \varphi_2}{\partial x} &= -\frac{1}{2x}\varphi_2, & \frac{\partial \varphi_2}{\partial z} &= \frac{4i}{x}\varphi_3, \label{eq:2}\\
    \frac{\partial \varphi_3}{\partial x} &= \frac{1}{2x}\varphi_3, & \frac{\partial \varphi_3}{\partial z} &= \frac{3i}{x}\varphi_4, \label{eq:3}\\
    \frac{\partial \varphi_4}{\partial x} &= \frac{3}{2x}\varphi_4, & \frac{\partial \varphi_4}{\partial z} &= 0. \label{eq:4}
  \end{align}
  From \eqref{eq:4}, $\varphi_4$ is independent of $z$ and is solved as
  \[\varphi_4(x,z) = C_4x^{\frac{3}{2}} \quad (C_4 \in \C).\]
  From the first equation of \eqref{eq:3}, $\varphi_3$ is of the form $\varphi_3(x,z) = A_3(z)x^{\frac{1}{2}}$ for some function $A_3(z)$.
  Substituting this into the second equation of \eqref{eq:3} and combining with $\varphi_4$, $A_3$ satisfies $\frac{d A_3}{d z} = 3i C_4$, and thus we have
  \[\varphi_3(x,z) = (C_3 + 3i C_4 z)x^{\frac{1}{2}} \quad (C_3 \in \C).\]
  Similarly, from the first equation of \eqref{eq:2}, $\varphi_2$ is of the form $\varphi_2(x,z) = A_2(z)x^{-\frac{1}{2}}$ for some function $A_2(z)$.
  Substituting this into the second equation of \eqref{eq:2} and combining with $\varphi_3$, $A_2$ satisfies $\frac{d A_2}{d z} = 4i(C_3 + 3i C_4 z)$, and thus we have
  \[\varphi_2(x,z) = \left(C_2 + 4i C_3 z - 6 C_4 z^2\right)x^{-\frac{1}{2}} \quad (C_2 \in \C).\]
  Finally, from the first equation of \eqref{eq:1}, $\varphi_1$ is of the form $\varphi_1(x,z) = A_1(z)x^{-\frac{3}{2}}$ for some function $A_1(z)$.
  Substituting this into the second equation of \eqref{eq:1} and combining with $\varphi_2$, $A_1$ satisfies $\frac{d A_1}{d z} = 3i(C_2 + 4i C_3 z - 6 C_4 z^2)$, and thus we have
  \[\varphi_1(x,z) = \left(C_1 + 3i C_2 z - 6 C_3 z^2 - 6i C_4 z^3\right)x^{-\frac{3}{2}} \quad (C_1 \in \C).\]
  Therefore, all spin $\frac{3}{2}$ Killing spinors with $\mu = \frac{i}{2}$ are given by
  \[\varphi(x,z) = C_1\begin{pmatrix}
    x^{-\frac{3}{2}}\\
    0\\
    0\\
    0
  \end{pmatrix} + C_2\begin{pmatrix}
    3izx^{-\frac{3}{2}}\\
    x^{-\frac{1}{2}}\\
    0\\
    0
  \end{pmatrix} + C_3\begin{pmatrix}
    -6z^2x^{-\frac{3}{2}}\\
    4izx^{-\frac{1}{2}}\\
    x^{\frac{1}{2}}\\
    0
  \end{pmatrix} + C_4\begin{pmatrix}
    -6iz^3x^{-\frac{3}{2}}\\
    -6z^2x^{-\frac{1}{2}}\\
    3izx^{\frac{1}{2}}\\
    x^{\frac{3}{2}}
  \end{pmatrix} \quad (C_1, C_2, C_3, C_4 \in \C).\]
  Thus, the dimension of the space of spin $\frac{3}{2}$ Killing spinors with $\mu = \frac{i}{2}$ is 4.
\end{proof}

\begin{rema}
  In general, each component of higher spin Killing spinors on $\hb{3}$ can be expressed as a product of a polynomial in $z$ and a power of $x$ if $\mu = \frac{i}{2}$. Similarly, if $\mu = -\frac{i}{2}$, each component can be expressed as a product of a polynomial in $\bar{z}$ and a power of $x$.
\end{rema}

\section{Killing spinor-type equation with integral spin}
\label{sec:integral_spin}
We have studied a differential equation $\nabla_X \psi = \mu \pi_j(X) \psi$ for half-integral spins.
Analogously, we can consider the case of integral spins.
In this section, we study the above differential equation on integral spin bundles and compare it with the half-integral spin case.
We focus on the 3-dimensional manifolds as in the previous sections.

\subsection{Ingredients of the integral spin bundles}
Let $(\rho_j, V_j)$ be the spin $j$ representation of $\Spin(3) \cong \SU(2)$ for $j \in \Z_{\geq 0}$, which reduces to the representation of $\SO(3)$.
In other words, $V_j$ is an irreducible representation of $\SU(2)$ with highest weight $2j$.
For example, $V_1 \cong \su(2) \otimes \C$ is the adjoint representation, which is equivalent to the natural representation of $\SO(3)$. In general, $V_j$ is identified with the $j$-th traceless symmetric tensor product of $\C^3$, that is, $V_j \cong \Sym^j_0 \C^3$. Thus, the spin $j$ bundle is a vector bundle $\Sym^j_0 := \Sym^j_0 TM^{\C}$ and its sections are traceless symmetric tensor fields of degree $j$.

Most of the arguments in \S \ref{sec:ingredients_of_higherspin_on_3dim} carry over to the integral spin case.
By the Clebsch-Gordan formula, we have
\[\Sym^j_0 \otimes T M^{\C} \cong \Sym^{j+1}_0 \oplus \Sym^j_0 \oplus \Sym^{j-1}_0.\]
Accordingly, we have three natural first order differential operators
\[D_j \colon \Gamma(\Sym^j_0) \to \Gamma(\Sym^j_0), \quad T_j^+ \colon \Gamma(\Sym^j_0) \to \Gamma(\Sym^{j+1}_0), \quad T_j^- \colon \Gamma(\Sym^j_0) \to \Gamma(\Sym^{j-1}_0),\]
and Clifford homomorphisms
\[\rho_j(X) \colon \Sym^j_0 \to \Sym^j_0, \quad \rho^+_j(X) \colon \Sym^j_0 \to \Sym^{j+1}_0, \quad \rho_j^-(X) \colon \Sym^j_0 \to \Sym^{j-1}_0, \quad \text{for } X \in TM.\]
Furthermore, the Weitzenb\"ock-type formulas also hold for these operators, see \cite{HT}.

\begin{rema}
  Unlike in the half-integral spin case, the operator $D_j$ is not elliptic.
\end{rema}

\begin{exam}
  When $j = 1$, the bundle $\Sym^1_0$ is the complexified tangent bundle $TM^{\C}$, which is isomorphic to cotangent bundle $T^{\ast}M^{\C}$.
  The operators $D_1, T_0^+, T_1^-$ are identified with well-known differential operators (up to constant):
  \begin{align*}
    D_1 &= \ast d = \operatorname{rot} \colon \Gamma(T^{\ast}M^{\C}) \to \Gamma(T^{\ast}M^{\C}),\\
    T_0^+ &= d = \operatorname{grad} \colon C^{\infty}(M, \C) \to \Gamma(T^{\ast}M^{\C}),\\
    T_1^- &= \delta = \operatorname{div} \colon \Gamma(T^{\ast}M^{\C}) \to C^{\infty}(M, \C).
  \end{align*}
\end{exam}

\subsection{Killing spinor equation on integral spin bundle}
\label{sec:Killing_spinor_type_eq_on_int_spin}
From now on, we consider the differential equation
\begin{equation}
  \nabla_X K = \mu \rho_j(X) K \quad (\forall X \in TM) \label{eq:int_spin_eq}
\end{equation}
for sections $K \in \Gamma(\Sym^j_0)$, where $\mu \in \C$ is a constant.

The same argument as in the half-integral spin case shows that a solution $K \in \Gamma(\Sym^j_0)$ of the above equation satisfies $K \in \ker T^+_j \cap \ker T^-_j$.
Since the space $\ker T^+_j \cap \ker T^-_j \subset \Gamma(\Sym^j_0)$ consists of traceless Killing tensors, see \cite{HMS}, we have the following proposition.
\begin{prop}
  Let $K \in \Gamma(\Sym^j_0)$ be a solution to the differential equation
  \[\nabla_X K = \mu \rho_j(X) K \quad \forall X \in TM.\]
  Then, $K$ is a traceless Killing tensor.
\end{prop}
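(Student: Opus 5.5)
The plan has two steps: first show $K\in\ker T^+_j\cap\ker T^-_j$, then invoke the cited characterization from \cite{HMS} that this intersection consists exactly of traceless Killing tensors. The first step mirrors Proposition~\ref{prop:space of the higher Killing spinor}, now with $\rho_j$ in place of $p_j$.

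For the first step, insert the equation $\nabla_X K=\mu\rho_j(X)K$ into the local expressions of the generalized gradients. This gives
\[
T^{\pm}_jK=\sum_i\rho^{\pm}_j(e_i)\nabla_{e_i}K=\mu\sum_i\rho^\pm_j(e_i)\rho_j(e_i)K .
\]
The operator $\sum_i\rho^\pm_j(e_i)\rho_j(e_i)$ is independent of the orthonormal frame and $\Spin(3)$-equivariant from $V_j$ to $V_{j\pm1}$. Since these are irreducible and non-isomorphic (their highest weights are $2j$ and $2j\pm2$), Schur's lemma forces the operator to vanish. Hence $T^+_jK=T^-_jK=0$. For $j=0$ there is nothing to check for $T^-_0$; in that case the equation reads $dK=0$ (as $\rho_0=0$), so $K$ is constant, which is trivially Killing.

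For the second step, recall that $\ker T^+_j\cap\ker T^-_j\subset\Gamma(\Sym^j_0)$ consists of traceless Killing tensors. The mechanism behind this is as follows. The projection of $\nabla K$ onto the $\Sym^{j+1}$-part of $\Sym^j_0\otimes TM$ is the symmetrized derivative $dK$. This symmetrization splits into the traceless part, which is $T^+_jK$ up to a constant, and a trace part, which is $g\odot\delta K$ up to a constant, that is, $g\odot T^-_jK$. So the vanishing of both gives $dK=0$, the Killing tensor condition in the sense used in Proposition~\ref{prop:higher_Killing_spinor_and_Killingtensor}. Tracelessness is automatic, since $K$ is a section of $\Sym^j_0$.

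The only potential subtlety is confirming that $\Pi_{\Sym^{j+1}}\circ\nabla$, restricted to traceless $K$, really decomposes into just these two pieces, i.e.\ $\Sym^{j+1}\cong\Sym^{j+1}_0\oplus g\odot\Sym^{j-1}$. That is standard and is the content of \cite{HMS}, so I would simply cite it.
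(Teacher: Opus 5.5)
Your proposal is correct and matches the paper's argument. The paper also obtains $K\in\ker T^+_j\cap\ker T^-_j$ by the Schur's lemma argument of Proposition~\ref{prop:space of the higher Killing spinor}, with $\rho_j$ in place of $p_j$, and then cites \cite{HMS} to identify this intersection with the traceless Killing tensors; your sketch of how $dK$ splits into a multiple of $T^+_jK$ and a multiple of $g\odot T^-_jK$ accurately explains that cited fact.
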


We remark that Theorem \ref{theo:3dim_Killing_and_Einstein} does not hold in the integral spin case. As a counterexample, we consider $M = \sphere{1} \times \sphere{2}$, which admits a non-zero parallel vector field $V$ arising from the $\sphere{1}$ factor. While $V$ is a solution to the above differential equation for $\mu = 0$, the manifold $\sphere{1} \times \sphere{2}$ is not Einstein. Furthermore, the traceless part of the $j$-th symmetric tensor product of $V$ also provides a solution for $\mu = 0$ for any $j \in \Z_{\geq 0}$.
Indeed, the proof of Theorem \ref{theo:3dim_Killing_and_Einstein} uses essentially the fact that half-integral spin representation has no weight zero.
In the integral spin case, the representation $\pi_j$ contains the zero weight, and thus the same argument fails.
These observations reflect the fact that the Dirac operator $D_j$ is elliptic for half-integral spins, whereas it fails to be elliptic for integral spins.

\subsection{Killing tensors on $\sphere{3}$}
\label{sec:Killing_tensor_on_S3}
In this subsection, we study solutions to the differential equation in \S \ref{sec:Killing_spinor_type_eq_on_int_spin} on the 3-dimensional sphere $\sphere{3}$.
We use the same notations as in \S \ref{sec:higher_killing_on_S3}.

For any element $K \in V_j$, we define a map $K^L \colon G \to V_j$ as $G_L$-invariant and $H$-equivariant.
Since $K^L$ is $H$-equivariant, it defines a section of $\Sym^j_0 = G \times_{H} V_j$ over $\sphere{3} \cong G/H$.
We denote this section by the same symbol $K^L \in \Gamma(\Sym^j_0)$.
Similarly, we can define $K^R \in \Gamma(\Sym^j_0)$ to be $G_R$-invariant and $H$-equivariant.

We write two irreducible components of $\Gamma(\Sym^j_0)$ with highest weights $(0, 2j)$ and $(2j, 0)$ respectively as $K_{2j}^{\pm}$.
By the same argument as Lemma \ref{lemm:irrep_and_higherspin_killing}, we have
\begin{lemm}
  For any $K \in V_j$, $K^L \in K_{2j}^{+}$ and $K^R \in K_{2j}^{-}$ satisfy
  \[\nabla_X K^L = \frac{1}{2}\rho_j(X) K^L, \quad \nabla_X K^R = -\frac{1}{2}\rho_j(X) K^R \quad (\forall X \in \Gamma(T\sphere{3})).\]
  Therefore, $K_{2j}^{+}$ (resp. $K_{2j}^{-}$) is the space of solutions to the differential equation
  \[\nabla_X K = \mu \rho_j(X) K \quad \forall X \in T\sphere{3}\]
  for $\mu = \frac{1}{2}$ (resp. $\mu = -\frac{1}{2}$).
\end{lemm}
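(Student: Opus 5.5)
The plan is to mirror the proof of Lemma \ref{lemm:irrep_and_higherspin_killing} verbatim, replacing $(\pi_j, W_j)$ by $(\rho_j, V_j)$. Both sides of the equation are tensorial in $X$, so it suffices to treat left-invariant vector fields $\tilde{X}$ on $\sphere{3} \cong G/H$. These correspond to $(X/2,-X/2) \in \m$ under the identification $T_{[e]}\sphere{3} \cong \su(2)$, $(X,-X) \mapsto 2X$. Since $K^L$ is $G_L$-invariant and $H$-equivariant, we have $K^L(g_1,g_2) = \rho_j(g_2^{-1})K$. Differentiating along the curve $(g_1\exp(tX/2), g_2\exp(-tX/2))$ then gives $\frac{d}{dt}|_{t=0}\rho_j(\exp(tX/2))\rho_j(g_2^{-1})K = \frac12\rho_j(X)K^L$.

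The only point to check is that this derivative really computes $\nabla_{\tilde X}$. The canonical (reductive) connection on $G\times_H V_j$ coincides with the Levi-Civita-induced one on $\sphere{3}$, because $G/H$ is a symmetric space: $\m$ is the $-1$ eigenspace of the swap involution. This is the same fact used implicitly in Lemma \ref{lemm:irrep_and_higherspin_killing}. Since $\Sym^j_0 = G\times_H V_j$ is associated to the same spin structure $G\to\sphere{3}$, nothing changes. The $K^R$ case is identical: $K^R(g_1,g_2)=\rho_j(g_1^{-1})K$, and differentiating produces $-\frac12\rho_j(X)K^R$.

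Next I would identify the solution spaces. The map $\C\boxtimes V_j \ni K \mapsto K^L$ is $G$-equivariant and injective, so $\{K^L\}$ is the irreducible component $K_{2j}^+$ of highest weight $(0,2j)$. Likewise $\{K^R\}=K_{2j}^-$ has highest weight $(2j,0)$. Thus every element of $K_{2j}^{\pm}$ solves \eqref{eq:int_spin_eq} with $\mu=\pm\frac12$.

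For the converse, a solution is parallel for $\widetilde\nabla_X=\nabla_X-\mu\rho_j(X)$. It is therefore determined by its value at one point, so the solution space has dimension at most $\rank \Sym^j_0 = 2j+1 = \dim K_{2j}^{\pm}$. Comparing dimensions gives equality, which is the main (mild) obstacle: we need the inclusion of $K_{2j}^{\pm}$ into the solution space plus this upper bound, rather than any flatness computation. Flatness of $\widetilde\nabla$ would also follow, as in Proposition \ref{prop:triv_Sj_on_S3}, from $R_j(X,Y)=-\frac14\rho_j([X,Y]_{\g})$, but it is not needed.
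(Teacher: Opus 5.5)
Your proposal is correct and takes the paper's route. The paper proves this lemma ``by the same argument as Lemma \ref{lemm:irrep_and_higherspin_killing}'': differentiate $K^L(g_1,g_2)=\rho_j(g_2^{-1})K$ (resp.\ $K^R(g_1,g_2)=\rho_j(g_1^{-1})K$) along $(g_1\exp(tX/2),g_2\exp(-tX/2))$, then compare dimensions. Your extra remarks only make explicit what the paper leaves implicit: that this derivative computes the Levi-Civita covariant derivative because $G/H$ is symmetric, that $K\mapsto K^L$ identifies $\C\boxtimes V_j$ with $K_{2j}^{+}$, and that $\widetilde\nabla$-parallelism gives the bound $\dim\le\rank\Sym^j_0=2j+1$.
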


On the other hand, according to \cite{HT}, the space of traceless Killing tensors on $\sphere{3}$ is
\[\ker T_j^+ \cap \ker T_j^- = K_{2j}^+ \oplus K_{2j}^-.\]
Thus, we have the following proposition.
\begin{prop}
  Any traceless Killing tensor on $\sphere{3}$ can be decomposed into the sum of two solutions of the differential equation \eqref{eq:int_spin_eq} for $\mu = \frac{1}{2}$ and $\mu = -\frac{1}{2}$.
\end{prop}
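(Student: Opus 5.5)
The statement follows by combining the preceding lemma with the cited description of traceless Killing tensors on $\sphere{3}$. Let $K \in \Gamma(\Sym^j_0)$ be a traceless Killing tensor. By \cite{HMS}, traceless Killing tensors in dimension three are exactly the elements of $\ker T_j^+ \cap \ker T_j^-$. The decomposition from \cite{HT},
\[\ker T_j^+ \cap \ker T_j^- = K_{2j}^+ \oplus K_{2j}^-,\]
then lets us write $K = K_+ + K_-$ uniquely with $K_\pm \in K_{2j}^\pm$.

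By the preceding lemma, every element of $K_{2j}^+$ is of the form $L^L$ for some $L \in V_j$. Such an element satisfies $\nabla_X L^L = \frac{1}{2}\rho_j(X)L^L$, so $K_+$ solves \eqref{eq:int_spin_eq} with $\mu = \frac{1}{2}$. In the same way $K_-$ solves \eqref{eq:int_spin_eq} with $\mu = -\frac{1}{2}$.

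The one point to check is that the maps $V_j \ni L \mapsto L^L \in \Gamma(\Sym^j_0)$ and $L \mapsto L^R$ really have images $K_{2j}^+$ and $K_{2j}^-$. These maps are $G$-equivariant and injective, since evaluation at $(e,e)$ recovers $L$. Their images are therefore irreducible $G$-modules of highest weight $(0,2j)$ and $(2j,0)$ respectively.

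This identification uses that these highest weights occur with multiplicity one in $\Gamma(\Sym^j_0)$. That follows from Frobenius reciprocity, exactly as in the half-integral case: the multiplicity of $V_a \boxtimes V_b$ equals $\dim \Hom_H(V_a \otimes V_b, V_j)$. For $(a,b) = (0,2j)$ or $(2j,0)$ this dimension is $1$. The lemma's equations then hold on all of $K_{2j}^\pm$, and the decomposition $K = K_+ + K_-$ gives the desired splitting.

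The only substantive input is the cited identification $\ker T_j^+ \cap \ker T_j^- = K_{2j}^+ \oplus K_{2j}^-$, which I take from \cite{HT} as the paper does. No step is a real obstacle, since the Frobenius reciprocity check is routine.
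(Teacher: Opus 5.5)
Your proof is correct and follows the paper's argument. Like the paper, you combine the lemma showing that $K_{2j}^{\pm}$ consists of solutions of \eqref{eq:int_spin_eq} for $\mu=\pm\frac{1}{2}$ with the decomposition $\ker T_j^+\cap\ker T_j^-=K_{2j}^+\oplus K_{2j}^-$ taken from \cite{HT}. Your Frobenius reciprocity check just spells out the multiplicity-one fact that the paper uses without comment when it refers to \emph{the} components $K_{2j}^{\pm}$. One notational point: there you index $V_a$ by its highest weight, whereas the paper's $V_j$ has highest weight $2j$, so the two conventions clash.
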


\begin{rema}
  It is well-known that all Killing tensors on spheres are obtained as polynomials of Killing vector fields, see \cite{Takeuchi}.
\end{rema}

As we mentioned in Proposition \ref{prop:higher_Killing_spinor_and_Killingtensor}, higher spin Killing spinors give rise to Killing tensors.
We shall discuss this relation in more detail in the case of $\sphere{3}$.
For the purpose of a representation-theoretic treatment, in the following argument, we do not take the real part $\Re$ of the Killing tensors $K^m_{\varphi, \psi}$ and use the complexified tangent bundle $(T\sphere{3})^{\C}$ instead of $T\sphere{3}$.
Namely, we set $K^m_{\varphi, \psi}(X_1, \ldots, X_m) := \langle \pi_j(X_1) \odot\cdots\odot \pi_j(X_m)\varphi, \psi \rangle$ for Killing spinors $\varphi, \psi \in \Gamma(S_j)$ with the same real Killing number and $X_1, \ldots, X_m \in (T\sphere{3})^{\C}$.
Let $K_{2j+1}^+$ be the space of spin $j + \frac{1}{2}$ Killing spinors with Killing number $\mu = \frac{1}{2}$ on $\sphere{3}$ as in \S \ref{sec:higher_killing_on_S3}.
$K_{2j+1}^+$ is isomorphic to $\C \boxtimes W_j$ as an irreducible representation of $G = \SU(2) \times \SU(2)$.
For any $m \in \Z_{\geq 0}$, a linear map
\[K_{2j+1}^{+} \otimes \overline{K_{2j+1}^{+}} \ni \varphi \otimes \psi \mapsto K^m_{\varphi, \psi} \in \Gamma(\Sym^m)\]
is $G$-equivariant, where $\overline{K_{2j+1}^{+}}$ is the complex conjugate representation of $K_{2j+1}^+$.
Let $H, E, F \in \sl(2, \C)$ be the standard basis as in \S \ref{sec:higher_killing_on_H3}, and $\psi_k \in K_{2j+1}^+$ be a weight vector of weight $(0, k)$ for $k = 2j+1, 2j-1, \ldots, -2j-1$.
For simplicity, we write $K^m_{k, l} := K^m_{\psi_k, \psi_l}$ for $k, l = 2j+1, 2j-1, \ldots, -2j-1$.
\begin{lemm}
  \label{lemm:weight_of_K_mkl}
  If a Killing tensor $K^m_{k,l}$ is non-zero, then it is a weight vector of weight $(0, k-l)$.
\end{lemm}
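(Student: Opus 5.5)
The plan is to use the $G$-equivariance of the bilinear map $(\varphi,\psi)\mapsto K^m_{\varphi,\psi}$ together with the fact that $K^m_{\varphi,\psi}$ is sesquilinear: complex-linear in $\varphi$ and conjugate-linear in $\psi$. We then track the action of the Cartan subalgebra of $\g=\su(2)\oplus\su(2)$. Fix the Cartan element $(0,H)$ and, for symmetry, $(H,0)$. Because $\psi_k\in K^+_{2j+1}\cong\C\boxtimes W_j$, the first factor $\SU(2)\times\{e\}$ acts trivially, so $(H,0)\psi_k=0$ and $(0,H)\psi_k=k\psi_k$.

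\textbf{Step 1: infinitesimal equivariance.} First I make precise the equivariance asserted before the lemma. The $G$-action on $\Gamma(S_j)$ and $\Gamma(\Sym^m)$ is by pull-back along the isometric action of $G$ on $\sphere{3}=G/H$, and this action lifts to the spin structure $G\to G/H$. The Clifford homomorphisms $\pi_j(X)$ and the Hermitian inner product are $\SU(2)$-invariant, so they commute with this action. Hence
\[
g\cdot K^m_{\varphi,\psi}=K^m_{g\varphi,g\psi}, \qquad g\in G.
\]
Differentiating gives, for $Z\in\g$,
\[
Z\cdot K^m_{\varphi,\psi}=K^m_{Z\varphi,\psi}+K^m_{\varphi,Z\psi}.
\]
This uses sesquilinearity. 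The second term is really $\langle\cdots\varphi,\, Z\psi\rangle$, which equals $-\langle Z(\cdots\varphi),\psi\rangle$ type terms by unitarity; this is how the conjugate representation $\overline{K^+_{2j+1}}$ enters.

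\textbf{Step 2: weight computation.} Apply the formula with $Z=(0,\sqrt{-1}\,\sigma)$, the element of the compact real form whose complexification is $(0,H)$. On $\psi_k$ this element acts by the scalar $\sqrt{-1}\,k$, up to the paper's convention $H=-i\sigma_1$. Conjugate-linearity in the second slot turns $\sqrt{-1}\,l$ into $-\sqrt{-1}\,l$, so
\[
Z\cdot K^m_{k,l}=\sqrt{-1}\,(k-l)\,K^m_{k,l}.
\]
Equivalently, $\overline{K^+_{2j+1}}$ has weights equal to the negatives of those of $K^+_{2j+1}$, so $\psi_k\otimes\bar\psi_l$ has weight $(0,k-l)$. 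In the same way $(H,0)$ acts by $0$ on both factors. Therefore $K^m_{k,l}$, when nonzero, is a simultaneous eigenvector of the Cartan subalgebra with weight $(0,k-l)$, which is exactly a weight vector.

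\textbf{Main obstacle.} The delicate part is Step 1. I have to check that the $G$-action on $\Gamma(\Sym^m)$ used to define weights is the same one under which $K^m_{\varphi,\psi}$ transforms covariantly, and keep the sign conventions straight: the action is by $\varphi(g'^{-1}\cdot)$, and $H=-i\sigma_1$. I would handle this in the $H$-equivariant function picture $C^\infty(G,\cdot)^H$. There, $K^m_{\varphi,\psi}$ corresponds to the function
\[
(g_1,g_2)\mapsto \sum \langle \pi_j(e_{i_1})\odot\cdots\odot\pi_j(e_{i_m})\varphi(g_1,g_2),\ \psi(g_1,g_2)\rangle\, e_{i_1}\cdots e_{i_m}.
\]
Left translation by $G$ then visibly commutes with the pairing, since $H$-equivariance of this function follows from the $\SU(2)$-invariance of the pairing and of $\pi_j$. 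That makes the equivariance immediate, and the weight calculation in Step 2 is then routine.
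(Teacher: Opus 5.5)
Your proposal is correct and follows the paper's own proof. Both differentiate the $G$-equivariance of $(\varphi,\psi)\mapsto K^m_{\varphi,\psi}$: the element $(0,H)$ contributes $k$ through $\psi_k$ and, by conjugate-linearity in the second slot, $-l$ through $\psi_l$, while $(H,0)$ acts by $0$ because $\psi_k,\psi_l$ are $G_L$-invariant. Your Step 1 only spells out the equivariance that the paper asserts without proof; note also that your $Z$ should read $(0,\sqrt{-1}H)=(0,\sigma_1)$.
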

\begin{proof}
  Since $H = -i\sigma_1$, for $(0, H) \in \sl(2, \C) \oplus \sl(2, \C)$, we have
  \begin{align*}
    (0, H) \cdot K^m_{k,l}(X_1, \ldots, X_m) &= \langle \pi_j(X_1) \odot\cdots\odot \pi_j(X_m)((0, H) \cdot \psi_k), \psi_l \rangle - \langle \pi_j(X_1) \odot\cdots\odot \pi_j(X_m)\psi_k, (0, H) \cdot \psi_l \rangle\\
    &= k K^m_{k,l}(X_1, \ldots, X_m) - l K^m_{k,l}(X_1, \ldots, X_m)\\
    &= (k-l) K^m_{k,l}(X_1, \ldots, X_m).
  \end{align*}
  By the same calculation and $G_L$-invariance of $\psi_k, \psi_l$, we have $(H, 0) \cdot K^m_{k,l} = 0$.
\end{proof}
We trivialize the spin structure over $\sphere{3}$ as in \S \ref{sec:higher_killing_on_S3}, which yields $S_j \cong \sphere{3} \times W_j$ and $(T\sphere{3})^{\C} \cong \sphere{3} \times \sl(2,\C)$. 
Under this trivialization, each $\psi_k$ is constant.
We also use the same symbols $E$ and $F$ to denote the global constant vector fields induced by $E, F \in \mathfrak{sl}(2, \mathbb{C})$. Then, for each $k < 2j+1$, there exists a non-zero constant $c \in \mathbb{C}$ such that $\pi_j(E)\psi_k = c \psi_{k+2}$, whereas $\pi_j(E)\psi_{2j+1} = 0$. Similarly, for each $k > -2j-1$, there exists a non-zero constant $c \in \mathbb{C}$ such that $\pi_j(F)\psi_k = c \psi_{k-2}$, whereas $\pi_j(F)\psi_{-2j-1} = 0$.
From this fact, we know that if $2m < |k-l|$, then $K^m_{k,l}$ is zero.

\begin{lemm}
  \label{lemm:hw_condition}
  For each $m = 0, 1, \ldots, 2j+1$, the Killing tensor $K^{m}_{2j+1, 2(j-m)+1}$ satisfies the highest weight condition, that is, it is non-zero and is annihilated by the action of $(0, E) \in \sl(2, \C) \oplus \sl(2, \C)$.
\end{lemm}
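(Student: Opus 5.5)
The plan is to work entirely in the global trivialization $S_j \cong \sphere{3}\times W_j$ and $(T\sphere{3})^{\C}\cong \sphere{3}\times\sl(2,\C)$ fixed above. In it the $\psi_k$ are constant, and $E,F,H$ are constant (left-invariant) vector fields. Set $k=2j+1$ and $l=2(j-m)+1$, so that $k-l=2m$.

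\textbf{Non-vanishing.} Evaluate $K^m_{k,l}$ on the $m$-tuple $(E,\ldots,E)$. Since the symmetrization is trivial on a repeated argument,
\[K^m_{k,l}(E,\ldots,E)=\langle \pi_j(E)^m\psi_{2j+1},\psi_{2(j-m)+1}\rangle .\]
This is the wrong direction, because $\pi_j(E)$ kills $\psi_{2j+1}$. I would therefore evaluate on $(F,\ldots,F)$ instead:
\[\langle \pi_j(F)^m\psi_{2j+1},\psi_{2(j-m)+1}\rangle = c\,\|\psi_{2(j-m)+1}\|^2 .\]
Here $c$ is a product of the non-zero constants from the lowering rule recorded just before the lemma. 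For $m\le 2j+1$ none of the intermediate weights falls below $-2j-1$, so $c\neq 0$. Hence $K^m_{k,l}\neq 0$. The only care needed is to match the conjugate-linearity convention in the second slot, which is harmless since only non-vanishing matters.

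\textbf{Annihilation by $(0,E)$.} By Lemma \ref{lemm:weight_of_K_mkl}, $K^m_{k,l}$ is a weight vector of weight $(0,2m)$. The map $\varphi\otimes\psi\mapsto K^m_{\varphi,\psi}$ is $G$-equivariant from $K^+_{2j+1}\otimes\overline{K^+_{2j+1}}$, so
\[(0,E)\cdot K^m_{k,l}=K^m_{(0,E)\psi_k,\psi_l}+K^m_{\psi_k,\overline{(0,E)}\psi_l}.\]
The first term vanishes because $\psi_{2j+1}$ is a highest weight vector. For the second term, the conjugate representation has $\overline{(0,E)}$ acting by $-\pi_j(E)^{*}$, which is a multiple of $\pi_j(F)$ (up to the sign conventions used in the proof of Lemma \ref{lemm:weight_of_K_mkl}). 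So the second term is a multiple of $K^m_{2j+1,\,2(j-m)-1}$, whose weight difference is $2m+2>2m$. By the observation preceding the lemma (if $2m<|k-l|$ then $K^m_{k,l}=0$), this vanishes.

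Alternatively, one can argue directly by weights. A non-zero vector $(0,E)\cdot K^m_{k,l}$ would have weight $(0,2m+2)$ inside $\Gamma(\Sym^m)$. Pointwise, in the constant trivialization, a symmetric $m$-tensor evaluated through $\pi_j(X_1)\odot\cdots\odot\pi_j(X_m)$ can shift weights by at most $2m$, which is the same counting argument.

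\textbf{Main obstacle.} The delicate point is making rigorous the identification of $(0,E)$ acting on $\overline{K^+_{2j+1}}$ with a lowering operator on the $\psi_l$, including the complex-conjugation conventions. I would handle this via the explicit formula $(0,H)\cdot K$ already computed in Lemma \ref{lemm:weight_of_K_mkl}, replacing $H$ by $E$ and using $\pi_j(E)^{*}=-\pi_j(\bar E)$-type relations coming from unitarity of $\pi_j$ on $\su(2)$.
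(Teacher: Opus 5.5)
Your proposal is correct and follows the paper's proof essentially step for step. You prove non-vanishing by evaluating on $(F,\ldots,F)$, and you get annihilation by splitting the $(0,E)$-action over the two slots: the first term dies because $\psi_{2j+1}$ is a highest weight vector, and the second, a multiple of $K^m_{2j+1,2(j-m)-1}$ since $-\pi_j(E)^{*}=\pi_j(\bar E)=-\pi_j(F)$, dies by the weight-gap observation $2m<|k-l|$ (and for $m=2j+1$ it vanishes outright because $\pi_j(F)\psi_{-2j-1}=0$).
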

\begin{proof}
  By the above argument, we have
    \[K_{2j+1, 2(j-m)+1}^m(F, \ldots, F) = \langle \pi_j(F) \cdots \pi_j(F)\psi_{2j+1}, \psi_{2(j-m)+1} \rangle = c \langle \psi_{2(j-m)+1}, \psi_{2(j-m)+1} \rangle\]
  for some non-zero constant $c \in \C$.
  Thus, $K^m_{2j+1, 2(j-m)+1}$ is non-zero section.
  Next, we show that $(0 ,E) \cdot K^m_{2j+1, 2(j-m)+1} = 0$.
  Since $\psi_{2j+1}$ is a highest weight vector of $K_{2j+1}^+$, we have
  \begin{align*}
    (0, E) \cdot K^m_{2j+1, 2(j-m)+1}(X_1, \ldots, X_m) &= \langle \pi_j(X_1) \odot\cdots\odot \pi_j(X_m)((0, E) \cdot \psi_{2j+1}), \psi_{2(j-m)+1} \rangle\\
    &\hspace{40pt}- \langle \pi_j(X_1) \odot\cdots\odot \pi_j(X_m)\psi_{2j+1}, (0, F) \cdot \psi_{2(j-m)+1} \rangle\\
    &= c \langle \pi_j(X_1) \odot\cdots\odot \pi_j(X_m)\psi_{2j+1}, \psi_{2j+1-2(m+1)} \rangle\\
    &= c K^m_{2j+1, 2(j-(m+1))+1}(X_1, \ldots, X_m)
  \end{align*}
  for some non-zero constant $c \in \C$. Since $2j+1 - (2(j-(m+1))+1) = 2(m+1) > 2m$, $K^m_{2j+1, 2(j-(m+1))+1}$ is zero, and thus we have $(0, E) \cdot K^m_{2j+1, 2(j-m)+1} = 0$ for each $m = 0, 1, \ldots, 2j+1$.
\end{proof}

Let $K^m(\sphere{3}) \subset \Gamma(\Sym^m)$ be the space of Killing tensors of degree $m$ on $\sphere{3}$, which is a $G$-invariant subspace of $\Gamma(\Sym^m)$.
By Lemma \ref{lemm:weight_of_K_mkl} and Lemma \ref{lemm:hw_condition}, each $K^{m}_{2j+1, 2(j-m)+1} \,\, (m = 0, \ldots, 2j+1)$ is a highest weight vector of weight $(0, 2m)$ for some irreducible component in $K^m(\sphere{3})$.
According to \cite{HT}, the space of $m$-th Killing tensors on $\sphere{3}$ is decomposed into irreducible components as
\[K^m(\sphere{3}) \cong \bigoplus_{0 \leq j \leq \lfloor \frac{m}{2} \rfloor} \bigoplus_{0 \leq i \leq \lfloor \frac{m}{2} - j\rfloor} (2m-2i-4j, 2i) \oplus (2i, 2m-2i-4j).\]
There is only one irreducible component with highest weight $(0, 2m)$, and it is the space of traceless Killing tensors $K^+_{2m}$.
Thus, we have the following proposition.
\begin{prop}
  On $\sphere{3}$, for each $m = 0, \ldots, 2j+1$, the Killing tensor $K^m_{2j+1, 2(j-m)+1}$ is a highest weight vector of the irreducible component $K^+_{2m}$ of $\Gamma(\Sym^m_0)$. In particular, $K^m_{2j+1, 2(j-m)+1}$ are traceless Killing tensors.
\end{prop}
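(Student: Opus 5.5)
The argument combines the two preceding lemmas with the decomposition of $K^m(\sphere{3})$ quoted from \cite{HT}. We then check tracelessness separately, so that we land in $K^+_{2m}\subset\Gamma(\Sym^m_0)$.

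First, $K^m_{2j+1,2(j-m)+1}$ is a Killing tensor of degree $m$. This is Proposition \ref{prop:higher_Killing_spinor_and_Killingtensor} applied to $\psi_{2j+1},\psi_{2(j-m)+1}\in K^+_{2j+1}$, which share the real Killing number $\tfrac12$. The computation $\nabla_Y K$ there is complex-linear, so it still works without taking real parts, and hence the complexified $K^m$ also lies in $K^m(\sphere{3})\otimes\C$. By Lemma \ref{lemm:hw_condition} it is nonzero and annihilated by $(0,E)$. By the proof of Lemma \ref{lemm:weight_of_K_mkl} it is annihilated by $(H,0)$, and $(E,0)$ also kills it because $\psi_k$ are $G_L$-invariant, so $G_L$ acts trivially on the whole construction. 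By Lemma \ref{lemm:weight_of_K_mkl} it has weight $(0,2m)$ for $(0,H)$. Thus it is a highest weight vector of weight $(0,2m)$ in the $G$-module $K^m(\sphere{3})$. Since $K^m(\sphere{3})$ is a finite sum of irreducibles, the cyclic submodule it generates is irreducible with highest weight $(0,2m)$.

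Next, read off the decomposition quoted from \cite{HT}. A summand $(2m-2i-4j',2i)$ has first entry $0$ and second entry $2m$ only when $i=m$ and $j'=0$ (the index $j'$ there is unrelated to our $j$), and the mirrored summands $(2i,\,2m-2i-4j')$ give $(0,2m)$ only for $i=0,j'=0$. We need to check that exactly one of these two cases gives $(0,2m)$ and not $(2m,0)$, which is bookkeeping. So the multiplicity of $(0,2m)$ is one, and the weight-$(0,2m)$ highest weight vectors form a line spanning that component's top weight space. It remains to identify this component with $K^+_{2m}$. The previous lemma and the identity $\ker T_m^+\cap\ker T_m^-=K^+_{2m}\oplus K^-_{2m}$ show that $K^+_{2m}\subset\Gamma(\Sym^m_0)$ consists of traceless Killing tensors of degree $m$ with highest weight $(0,2m)$. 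Via $\Sym^m_0\subset\Sym^m$ it sits inside $K^m(\sphere{3})$, so by multiplicity one it is that unique component. Hence $K^m_{2j+1,2(j-m)+1}\in K^+_{2m}$, and tracelessness follows.

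The main obstacle is the bookkeeping step in the \cite{HT} decomposition. I must make sure the range $0\le i\le\lfloor m/2-j'\rfloor$ actually permits $i=m$, or else use the mirrored term. If the indexing is ambiguous, I would instead argue directly: a $G_L$-invariant symmetric tensor is, in the left trivialization, a constant element of $\Sym^m\C^3$. Its $G_R$-module is $\Sym^m\C^3=\bigoplus_k V_{m-2k}$, in which weight $2m$ occurs only in the traceless part $V_m$. This gives tracelessness and membership in $K^+_{2m}$ independently.
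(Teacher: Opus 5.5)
Your proposal is correct and follows essentially the same route as the paper. Lemma \ref{lemm:weight_of_K_mkl} and Lemma \ref{lemm:hw_condition} make $K^m_{2j+1,2(j-m)+1}$ a highest weight vector of weight $(0,2m)$ in $K^m(\sphere{3})$, and the single occurrence of $(0,2m)$ in the decomposition quoted from \cite{HT} forces it into $K^+_{2m}$. Beyond that, you make explicit steps the paper leaves implicit: the check that $(E,0)$ acts trivially via $G_L$-invariance, and the index bookkeeping (for $m\geq 1$ only the mirrored term with $i=0$ contributes). Your fallback, which identifies $G_L$-invariant tensors with constants in $\Sym^m\C^3=\bigoplus_k V_{m-2k}$, is also a valid, self-contained way to get tracelessness and membership in $K^+_{2m}$ without using \cite{HT}.
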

A similar argument holds for higher spin Killing spinors with $\mu = -\frac{1}{2}$.
Note that this proposition shows the Clebsch-Gordan formula
\[K^{\pm}_{2j+1} \otimes \overline{K^{\pm}_{2j+1}} \cong K^{\pm}_{2j+1} \otimes K^{\pm}_{2j+1} \cong  K^{\pm}_{4j+2} \oplus K^{\pm}_{4j} \oplus \cdots \oplus K^{\pm}_{2} \oplus K^{\pm}_{0}.\]

\section*{Acknowledgements}
This work was partially supported by JSPS KAKENHI Grant Number JP24K06721, and Waseda University Grants for Special Research Project 2025C-719 and 2025E-013.

\bibliography{refs}

\newpage
\textsc{Yasushi Homma, Department of Mathematics, Faculty of science and engineering, Waseda University, 3-4-1 Ohkubo, Shinjuku-ku, Tokyo 169-8555, Japan.} \\
\vspace{-10pt}

\textit{E-mail address}: \texttt{homma\_yasushi@waseda.jp}\\

\textsc{Natsuki Imada, Department of Pure and applied Mathematics, Graduate school of fundamental science
and engineering, Waseda University, 3-4-1 Ohkubo, Shinjuku-ku, Tokyo, 169-8555, Japan.} \\
\vspace{-10pt}

\textit{E-mail}: \texttt{natsuki.imada@akane.waseda.jp}\\

\textsc{Soma Ohno, Department of Mathematics, Faculty of science and engineering, Waseda University, 3-4-1 Ohkubo, Shinjuku-ku, Tokyo 169-8555, Japan.} \\
\vspace{-10pt}

\textit{E-mail}: \texttt{runhorse@fuji.waseda.jp}

\end{document}